\title{Homology lens spaces and $\SL(2,\C)$}
\author{Sudipta Ghosh}
\address{Department of Mathematics\\University of Notre Dame}
\email{sghosh7@nd.edu}
\author{Steven Sivek}
\address{Department of Mathematics\\Imperial College London}
\email{s.sivek@imperial.ac.uk}
\author{Raphael Zentner}
\address{Department of Mathematical Sciences\\Durham University}
\email{raphael.zentner@durham.ac.uk}
\newtheorem*{rep@theorem}{\rep@title}
\newcommand{\newreptheorem}[2]{%
\newenvironment{rep#1}[1]{%
 \def\rep@title{#2 \ref{##1}}%
 \begin{rep@theorem}}%
 {\end{rep@theorem}}}
\newtheorem {theorem}{Theorem}
\newtheorem {lemma}[theorem]{Lemma}
\newtheorem {proposition}[theorem]{Proposition}
\newtheorem {corollary}[theorem]{Corollary}
\numberwithin{equation}{section}
\numberwithin{theorem}{section}
\theoremstyle{definition}
\newtheorem{setup}[theorem]{Setup}
\newtheorem{remark}[theorem]{Remark}
\newtheorem*{remark*}{Remark}
\newtheorem{example}[theorem]{Example}
\newlist{pcases}{enumerate}{1}
\setlist[pcases]{
  label=\bf{Case~\arabic*:}\protect\thiscase.~,
  ref=\arabic*,
  align=left,
  labelsep=0pt,
  leftmargin=0pt,
  labelwidth=0pt,
  parsep=0pt
}
\newcommand{\case}[1][]{%
  \if\relax\detokenize{#1}\relax
    \def\thiscase{}%
  \else
    \def\thiscase{~#1}%
  \fi
  \item
}
\newcommand{\Z}{\mathbb{Z}}
\newcommand{\R}{\mathbb{R}}
\newcommand{\C}{\mathbb{C}}
\newcommand{\F}{\mathbb{F}}
\newcommand{\Q}{\mathbb{Q}}
\newcommand{\Hom}{\operatorname{Hom}}
\newcommand\cC{\mathcal{C}}
\newcommand{\RP}{\mathbb{RP}}
\newcommand{\irr}{\text{irr}}
\newcommand\SU{\mathrm{SU}}
\newcommand\SO{\mathrm{SO}}
\newcommand\SL{\mathrm{SL}}
\newcommand\SHI{\mathit{SHI}}
\newcommand\KHI{\mathit{KHI}}
\DeclareFontFamily{U}{mathx}{\hyphenchar\font45}
\DeclareFontShape{U}{mathx}{m}{n}{
      <5> <6> <7> <8> <9> <10>
      <10.95> <12> <14.4> <17.28> <20.74> <24.88>
      mathx10
      }{}
\DeclareSymbolFont{mathx}{U}{mathx}{m}{n}
\DeclareMathAccent{\widecheck}{0}{mathx}{"71}
\DeclareSymbolFont{yhlargesymbols}{OMX}{yhex}{m}{n}
\DeclareMathAccent{\yhwidehat}{\mathord}{yhlargesymbols}{"62}
\newcommand{\inr}{\operatorname{int}}
\newcommand{\ad}{\operatorname{ad}}
\newcommand{\coker}{\operatorname{coker}}
\newcommand{\pt}{\mathrm{pt}}
\DeclareFontFamily{OMX}{MnSymbolE}{}
\DeclareSymbolFont{MnLargeSymbols}{OMX}{MnSymbolE}{m}{n}
\DeclareFontShape{OMX}{MnSymbolE}{m}{n}{
    <-6>  MnSymbolE5
   <6-7>  MnSymbolE6
   <7-8>  MnSymbolE7
   <8-9>  MnSymbolE8
   <9-10> MnSymbolE9
  <10-12> MnSymbolE10
  <12->   MnSymbolE12
}{}
\DeclareFontShape{OMX}{MnSymbolE}{b}{n}{
    <-6>  MnSymbolE-Bold5
   <6-7>  MnSymbolE-Bold6
   <7-8>  MnSymbolE-Bold7
   <8-9>  MnSymbolE-Bold8
   <9-10> MnSymbolE-Bold9
  <10-12> MnSymbolE-Bold10
  <12->   MnSymbolE-Bold12
}{}
\let\llangle\@undefined
\let\rrangle\@undefined
\DeclareMathDelimiter{\llangle}{\mathopen}%
                     {MnLargeSymbols}{'164}{MnLargeSymbols}{'164}
\DeclareMathDelimiter{\rrangle}{\mathclose}%
                     {MnLargeSymbols}{'171}{MnLargeSymbols}{'171}
\newcounter{desccount}
\newcommand{\descref}[1]{\hyperref[#1]{#1}}
\tikzset{every picture/.style=thick}
\tikzset{link/.style = { white, double = black, line width = 1.75pt, double distance = 1.25pt, looseness=1.75 }}
\tikzset{crossing/.style = {draw, circle, dotted, minimum size=0.5cm, inner sep=0, outer sep=0}}
\pgfplotsset{compat=1.12}
\begin{document}

\begin{abstract}
We prove that if $Y$ is a closed, oriented 3-manifold with first homology $H_1(Y;\Z)$ of order less than $5$, then there is an irreducible representation $\pi_1(Y) \to \SL(2,\C)$ unless $Y$ is homeomorphic to $S^3$, a lens space, or $\RP^3 \# \RP^3$.  By previous work it suffices to consider the case $H_1(Y;\Z) \cong \Z/4\Z$, which we accomplish using holonomy perturbation techniques in instanton Floer homology.
\end{abstract}

\maketitle

\section{Introduction} \label{sec:intro}

If we fix an integer $n \geq 4$, then a classical construction demonstrates that any finitely presented group $G$ can be realized as the fundamental group of some closed $n$-manifold.  The analogous statement is false in three dimensions, leading one to ask which $G$ can actually be realized as 3-manifold groups.  Techniques from both gauge theory and hyperbolic geometry suggest that representation varieties could be a good source of restrictions on $G$, and indeed the third author used these to prove the following:

\begin{theorem}[\cite{zentner}] \label{thm:zentner}
Let $Y$ be an integer homology 3-sphere.  If $Y$ is not homeomorphic to $S^3$, then there is an irreducible representation $\pi_1(Y) \to \SL(2,\C)$.
\end{theorem}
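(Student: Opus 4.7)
The plan is to argue by contradiction. Suppose $Y$ is an integer homology 3-sphere for which no irreducible representation $\pi_1(Y) \to \SL(2,\C)$ exists; we wish to deduce that $Y \cong S^3$. Since $H_1(Y;\Z) = 0$, the group $\pi_1(Y)$ is perfect, so every homomorphism to $\C^*$ is trivial. A reducible $\SL(2,\C)$ representation factors through the Borel subgroup of upper triangular matrices with trivial diagonal quotient, and a short computation using perfectness shows that any such representation is globally trivial. The hypothesis is therefore equivalent to asserting that $\pi_1(Y)$ admits no nontrivial homomorphism to $\SL(2,\C)$ at all.

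Using geometric decomposition, one first disposes of the simple cases. If $Y$ is hyperbolic, the holonomy $\pi_1(Y) \to \mathrm{PSL}(2,\C)$ is faithful, and the obstruction to lifting it to $\SL(2,\C)$, which lies in $H^2(Y;\Z/2)$, vanishes for an integer homology sphere, giving an irreducible representation. If $Y$ is Seifert fibered, then it is a Brieskorn homology sphere $\Sigma(a_1,a_2,a_3)$, whose fundamental group is known to admit irreducible $\SU(2) \subset \SL(2,\C)$ representations by classical work of Fintushel--Stern. The prime summands of $Y$ are themselves integer homology spheres and any nontrivial representation of a summand extends to $Y$, so one may assume $Y$ is prime; by geometrization, the only remaining case is that $Y$ contains an essential torus $T$. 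Because $H_2(Y;\Z) = 0$, such a $T$ must be separating, and we may write $Y = Y_1 \cup_T Y_2$ with each $Y_i$ an integer homology solid torus with incompressible boundary.

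In this toroidal case the strategy is a pillowcase intersection argument. The restriction maps $X(Y_i) \to X(T)$ land in the pillowcase $X(T) \cong (\C^* \times \C^*)/(\Z/2)$ as complex curves $C_i$, and an $\SL(2,\C)$ representation of $\pi_1(Y)$ is the same as a common point of $C_1$ and $C_2$. Both curves pass through the trivial character, and the task is to show they meet elsewhere. The key input is gauge-theoretic: nonvanishing of Kronheimer--Mrowka--style instanton Floer homology of the sutured pieces, obtained by placing sutures on $T$ and using holonomy perturbations along essential curves in $T$, forces each $C_i$ to be a nonconstant complex curve through the trivial character and controls its local structure there. A global intersection count inside a suitable projective compactification of the pillowcase then produces an additional geometric intersection, which corresponds to an irreducible $\SL(2,\C)$ representation of $\pi_1(Y)$ and contradicts the standing assumption.

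The main obstacle is the translation from instanton-theoretic nonvanishing, which most naturally controls $\SU(2)$ representations, into a statement about the complex character variety $X(Y_i)$. The route I would pursue is to use $\SU(2)$-valued holonomy perturbations to produce families of perturbed flat connections on each $Y_i$, then analytically continue the associated characters into $\SL(2,\C)$ and match them across $T$ using a compactness argument in the complexified moduli space. This deformation-and-matching step, together with the control of multiplicities at the trivial character, is where the bulk of the technical work should lie.
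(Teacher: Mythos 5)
First, note that the paper does not prove this statement: Theorem \ref{thm:zentner} is quoted from \cite{zentner}, so your proposal is really an attempt to reprove Zentner's theorem. Your overall architecture is the right one and matches both \cite{zentner} and the framework of this paper: reducible representations of a perfect group into $\SL(2,\C)$ are trivial; the hyperbolic case is handled by lifting the holonomy (the obstruction in $H^2(Y;\Z/2\Z)$ vanishes); the Seifert fibered case by Fintushel--Stern (note that a Seifert fibered homology sphere is $\Sigma(a_1,\dots,a_n)$ with $n\geq 3$ possibly larger than $3$, but their result still applies); connected sums reduce to prime pieces via the free product; and the remaining toroidal case is attacked by splitting along an essential torus and intersecting character-variety images in the pillowcase, with instanton Floer homology as the gauge-theoretic input.

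The genuine gap is in the toroidal case, which is the heart of the theorem, and both of the mechanisms you propose there fail. (1) The ``complex curves plus global intersection count in a projective compactification'' argument does not work as stated: the trivial character already contributes positively to any such count, so you would need to show the local multiplicity there is strictly smaller than the total, which is precisely the content you are assuming away; the closures of $C_1$ and $C_2$ can meet at infinity or at singular points of the compactified pillowcase; and in $\SL(2,\C)$ a common character on $T^2$ does not produce a representation of $\pi_1(Y)$, because a character of $\Z^2$ does not determine the representation up to conjugacy at reducible or parabolic points. (2) ``Analytically continuing'' families of $\SU(2)$ characters into $\SL(2,\C)$ and invoking ``compactness in the complexified moduli space'' are not available tools -- there is no Uhlenbeck compactness for $\SL(2,\C)$ connections and no such continuation principle -- and you correctly flag this as where the work lies, i.e.\ it is a gap. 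The actual argument stays entirely in $\SU(2)$ in the toroidal case: writing $Y$ as a splice of knot exteriors $M_i = \hat{Y}_i\setminus N(K_i)$, the non-vanishing $I^w_*\bigl((\hat{Y}_i)_0(K_i)\bigr)\neq 0$ (from Kronheimer--Mrowka, after showing the $0$-surgery is irreducible) together with holonomy perturbations forces the $\SU(2)$ pillowcase image of each $M_i$ to contain a \emph{topologically} essential closed curve in the cut-open pillowcase (the ``pillowcase alternative,'' Theorem~\ref{thm:pillowcase-alternative} above); since the splice exchanges meridian and longitude, one curve generates $H_1$ of the cylinder while the other becomes an arc joining its two ends, so they must intersect, and away from the corner this yields a non-abelian $\SU(2)\subset\SL(2,\C)$ representation. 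You also omit the reduction needed when the filled manifolds $\hat{Y}_i$ are not $\SU(2)$-abelian or are themselves toroidal; this requires either extending an irreducible representation of $\hat{Y}_i$ across the splice or an induction via degree-one maps (cf.\ Theorem~\ref{thm:rong} and \S\ref{sec:toroidal-decomposition} here).
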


In recent work we generalized this to the cases where $H_1(Y)$ is 2-torsion or 3-torsion.

\begin{theorem}[{\cite[Theorem~1.2]{gsz}}] \label{thm:gsz-reps2}
Let $Y$ be a closed, oriented, connected 3-manifold with $H_1(Y;\Z) \cong (\Z/2\Z)^{\oplus r}$.  If $Y$ is not homeomorphic to $\#^r \RP^3$, then there is an irreducible representation $\pi_1(Y) \to \SL(2,\C)$.
\end{theorem}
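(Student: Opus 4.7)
My plan is to reduce to the case of prime $Y$ using the free-product structure of $\pi_1$, and then to handle prime $Y$ by a rank argument in framed instanton homology.

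\textbf{Reduction to prime $Y$.} Write $Y = Y_1 \# \cdots \# Y_n$ as a connected sum of prime 3-manifolds. By Van Kampen, $\pi_1(Y) \cong \pi_1(Y_1) \ast \cdots \ast \pi_1(Y_n)$, so any irreducible representation of some $\pi_1(Y_i)$ extends trivially on the other factors to an irreducible representation of $\pi_1(Y)$. Since $H_1$ is additive under connected sum, each $H_1(Y_i;\Z)$ is 2-torsion, hence of the form $(\Z/2\Z)^{r_i}$. If every $Y_i$ is either $S^3$ or $\RP^3$ then $Y \cong \#^r \RP^3$, contrary to hypothesis; otherwise some prime $Y_i$ has 2-torsion first homology and is not $S^3$ or $\RP^3$, and we are reduced to treating that case, inducting on $r$ if necessary.

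\textbf{Prime case via $\Is$.} Let $Y$ be prime with $H_1(Y;\Z) \cong (\Z/2\Z)^s$ and $Y \not\cong S^3, \RP^3$. The case $s=0$ is Theorem~\ref{thm:zentner}, so assume $s \geq 1$. The key input is the rank inequality
\[
\dim_{\C} \Is(Y;\C) \ \geq \ |H_1(Y;\Z)| \ = \ 2^s,
\]
with equality implying that every $\SL(2,\C)$ representation of $\pi_1(Y)$ is abelian: after a suitable holonomy perturbation of the Chern--Simons functional, the $2^s$ abelian characters of $\pi_1(Y)$ already furnish $2^s$ non-degenerate generators of the Floer chain complex, and any irreducible representation yields an extra generator. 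Combined with the tensor-product formula $\Is(Y_1 \# Y_2;\C) \cong \Is(Y_1;\C) \otimes \Is(Y_2;\C)$ and the computation $\dim_\C \Is(\RP^3;\C) = 2$, it therefore suffices to prove the strict inequality $\dim_{\C} \Is(Y;\C) > 2^s$.

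\textbf{Main obstacle.} Establishing this strict inequality is the heart of the argument, and I would approach it by passing to the double cover $\widetilde{Y} \to Y$ associated to a surjection $H_1(Y;\Z) \twoheadrightarrow \Z/2\Z$. By the transfer, the 2-primary part of $H_1(\widetilde{Y};\Z)$ has order at most $2^{s-1}$, so by induction either $\widetilde{Y}$ admits an irreducible $\SL(2,\C)$ representation or else $\widetilde{Y} \cong \#^{s-1}\RP^3$. In the first case a deck-transformation-equivariance analysis of the representation on $\pi_1(\widetilde{Y})$ should produce an irreducible representation of $\pi_1(Y)$; in the second, the existence of a prime manifold $Y$ freely $\Z/2$-covered by $\#^{s-1}\RP^3$ imposes strong topological constraints that should force $Y \cong \RP^3$. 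The delicate point is the second alternative and the Galois descent in the first, and I would expect holonomy-perturbation techniques in the style of Zentner's proof of Theorem~\ref{thm:zentner} to be the essential tool for guaranteeing that the $2^s$ reducible flat connections contribute non-redundantly after perturbation, so that any extra Floer generator genuinely comes from an irreducible flat connection.
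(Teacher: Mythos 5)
This theorem is not proved in the present paper; it is quoted from \cite[Theorem~1.2]{gsz}, where the argument runs through a connected-sum reduction followed by geometrization (Culler--Shalen for hyperbolic pieces, the classification of $\SU(2)$-abelian Seifert fibered spaces, and a toroidal case handled by degree-one pinching maps, Rong's theorem, and pillowcase/holonomy-perturbation analysis of knot exteriors). Your reduction to prime summands is fine, but your treatment of the prime case contains two fatal problems. First, the strict inequality $\dim_\C \Is(Y;\C) > 2^s$ that you propose to establish for every prime $Y \not\cong S^3, \RP^3$ with $H_1(Y;\Z)\cong(\Z/2\Z)^s$ is simply false, so it cannot be proved: there are prime instanton L-spaces of this type, i.e.\ manifolds with $\dim \Is(Y) = |H_1(Y)|$ exactly. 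For $s=0$ the Poincar\'e sphere is one; for $s=1$, $2$-surgery on the right-handed trefoil is a prime Seifert fibered manifold with $H_1 \cong \Z/2\Z$, not homeomorphic to $\RP^3$, and it is an instanton L-space by work of Baldwin and Sivek. (These manifolds do admit irreducible representations, so they are consistent with the theorem, but they show your $\Is$-rank criterion cannot separate them from $\RP^3$.) Second, the perturbation claim you rely on --- that if $\pi_1(Y)$ has no irreducible representation then a suitable holonomy perturbation produces exactly $2^s$ nondegenerate generators --- is not a known theorem and is essentially the hard analytic content of the whole subject: small perturbations of a \emph{degenerate} reducible critical point can create irreducible perturbed critical points that are not close to any irreducible flat connection, and controlling this is precisely what the pillowcase analysis along an incompressible torus accomplishes; for a general closed prime $Y$ there is no such handle. (There is also a logical slip: you assert ``equality implies every representation is abelian,'' whereas your argument needs the converse implication.)

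The fallback via double covers is also broken. The transfer does not bound the $2$-primary part of $H_1(\widetilde{Y})$ by $2^{s-1}$: for $Y = S^3/Q_8$, which has $H_1 \cong (\Z/2\Z)^{\oplus 2}$, the double cover corresponding to the subgroup $\langle i\rangle \subset Q_8$ is $L(4,1)$ with $H_1 \cong \Z/4\Z$. Even when the cover's homology does shrink, it need not be $2$-torsion, so the inductive hypothesis would not apply; and extending an irreducible representation from an index-$2$ subgroup of $\pi_1(Y)$ back to $\pi_1(Y)$ is obstructed in general, so the ``Galois descent'' step is not available either. In short, the proposal substitutes an unproved (and in one place false) rank-of-$\Is$ criterion for the geometrization-plus-pillowcase argument that the cited proof actually uses, and the gap is not repairable along the lines sketched.
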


\begin{theorem}[{\cite[Theorem~1.3]{gsz}}] \label{thm:gsz-reps3}
Let $Y$ be a closed, oriented, connected 3-manifold with $H_1(Y;\Z) \cong (\Z/3\Z)^{\oplus r}$ for some $r \geq 1$.  If $Y$ is not homeomorphic to $\pm L(3,1)$, then there is an irreducible representation $\pi_1(Y) \to \SL(2,\C)$.
\end{theorem}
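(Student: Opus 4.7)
My plan is to analyze the regular 3-fold cyclic cover of $Y$ and apply Theorem~\ref{thm:zentner} on it, then promote any resulting representation back to $\pi_1(Y)$.

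Fix a surjection $\phi : H_1(Y;\Z) \twoheadrightarrow \Z/3$ (which exists since $r \ge 1$) and let $\widetilde{Y}_\phi \to Y$ be the associated regular cyclic cover. The five-term exact sequence of the Cartan--Leray spectral sequence, combined with $H_2(\Z/3;\Z) = 0$, yields
\[
0 \to H_1(\widetilde{Y}_\phi;\Z)_{\Z/3} \to H_1(Y;\Z) \xrightarrow{\phi} \Z/3 \to 0,
\]
and the transfer isomorphism $H_*(Y;\Q) \cong H_*(\widetilde{Y}_\phi;\Q)^{\Z/3}$ shows that $\widetilde{Y}_\phi$ is a rational homology 3-sphere. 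When $r=1$ and $\phi$ is the abelianization, the coinvariants $H_1(\widetilde{Y}_\phi;\Z)_{\Z/3}$ vanish, which severely restricts the $\Z[\Z/3]$-module $H_1(\widetilde{Y}_\phi;\Z)$.

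Suppose for the moment that $\widetilde{Y}_\phi$ is actually an integer homology sphere and apply Theorem~\ref{thm:zentner}. If $\widetilde{Y}_\phi \cong S^3$, the free deck action makes $Y$ a spherical space form with cyclic fundamental group of order $3$, which by the elliptization theorem must be $L(3,1)$ or $L(3,2) \cong -L(3,1)$. Otherwise one obtains an irreducible $\rho : \pi_1(\widetilde{Y}_\phi) \to \SL(2,\C)$, and the task is to promote $\rho$ to an irreducible representation of $\pi_1(Y)$. I would look for a $\Z/3$-invariant irreducible character on $\widetilde{Y}_\phi$: such a character defines a projective representation of $\pi_1(Y)$, and the obstruction to lifting it to $\SL(2,\C)$ lies in
\[
H^2\bigl(\Z/3;\, Z(\SL(2,\C))\bigr) \cong H^2(\Z/3; \{\pm 1\}) = 0,
\]
so the lift exists, and it is irreducible because its restriction is. For $r \ge 2$ I would induct, choosing $\phi$ so that $\widetilde{Y}_\phi$ has first homology a smaller-rank $3$-group, and ruling out the exceptional cases $\widetilde{Y}_\phi \cong \pm L(3,1)$ in the inductive step by examining $\pi_1(Y)$ directly.

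The principal technical obstacles are (i) establishing the existence of a $\Z/3$-fixed irreducible character on $\widetilde{Y}_\phi$, and (ii) handling the case in which $\widetilde{Y}_\phi$ is only a rational homology sphere with genuine torsion in $H_1$, so that Theorem~\ref{thm:zentner} does not apply directly. For (i), the holonomy perturbation techniques underlying \cite{zentner} often produce positive-dimensional families of irreducible $\SL(2,\C)$ characters on $\widetilde{Y}_\phi$, on which the deck $\Z/3$-action must have fixed points by a Lefschetz-type argument. For (ii), one presumably has to run the instanton Floer constructions equivariantly on $Y$ itself, rather than passing through the cover; this equivariant analysis is the step I expect to be the most delicate, and is likely the main content of the proof.
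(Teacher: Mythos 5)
This theorem is not proved in the present paper at all: it is quoted from \cite[Theorem~1.3]{gsz}, whose argument (as summarized in the introduction here) runs through geometrization, splitting $Y$ along an incompressible torus, degree-1 pinching maps onto simpler pieces, and instanton Floer non-vanishing feeding into a pillowcase intersection argument. Your covering-space strategy is a genuinely different route, and it has two gaps that I think are fatal rather than merely ``delicate.''

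First, the cover $\widetilde{Y}_\phi$ is in general only a rational homology sphere, so Theorem~\ref{thm:zentner} simply does not apply, and this is not a corner case. The five-term sequence controls only the coinvariants $H_1(\widetilde{Y}_\phi;\Z)_{\Z/3}$, and these can vanish while $H_1(\widetilde{Y}_\phi;\Z)$ is arbitrarily large: for any $n$ prime to $3$, the $\Z[\Z/3]$-module $\Z[\zeta_3]/(n)$ has trivial coinvariants because $\zeta_3-1$ is a unit modulo $n$. So ``severely restricts'' is not accurate, and the case you defer to an equivariant instanton analysis is the generic case, i.e.\ essentially the whole theorem. Invoking an unspecified equivariant version of the gauge theory is not a reduction to known results.

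Second, even in the lucky case where $\widetilde{Y}_\phi$ is an integer homology sphere and Theorem~\ref{thm:zentner} hands you an irreducible $\rho$, you have no argument that any irreducible character is fixed by the deck transformation, and without that the Clifford-theoretic extension step never starts. Theorem~\ref{thm:zentner} produces a single irreducible representation, not a positive-dimensional family, and a Lefschetz-type fixed-point argument is unavailable anyway because the $\SL(2,\C)$ character variety is a non-compact affine variety; the $\Z/3$-action on it can easily be free on the irreducible locus. (The cohomological lifting obstruction you compute is fine once invariance is granted, but invariance is the actual content.) The induction for $r\geq 2$ inherits the first problem: $H_1(\widetilde{Y}_\phi)$ need not be a $3$-group of smaller rank, so the inductive hypothesis need not apply to the cover.
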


The main goal of this paper is to generalize the techniques of \cite{gsz} and add the following additional case.

\begin{theorem} \label{thm:main-Z/4}
Let $Y$ be a closed, orientable 3-manifold with $H_1(Y;\Z) \cong \Z/4\Z$, and suppose that $Y$ is not homeomorphic to a lens space.  Then there is an irreducible representation $\pi_1(Y) \to \SL(2,\C)$.
\end{theorem}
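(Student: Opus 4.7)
The plan is to argue by contradiction, extending the holonomy-perturbation techniques in instanton Floer homology developed in \cite{gsz}. Suppose, aiming for a contradiction, that every $\SL(2,\C)$ representation of $\pi_1(Y)$ is reducible; I would then aim to deduce that $Y$ must be a lens space.

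The first step is to reduce to the prime case. If $Y = Y_1 \# Y_2$ is a nontrivial connected-sum decomposition, then $H_1(Y;\Z) \cong H_1(Y_1;\Z) \oplus H_1(Y_2;\Z)$, and since $\Z/4\Z$ is cyclic and does not split as $\Z/2\Z \oplus \Z/2\Z$, one summand (say $Y_1$) must be an integer homology sphere distinct from $S^3$. By Theorem~\ref{thm:zentner}, $\pi_1(Y_1)$ admits an irreducible $\SL(2,\C)$ representation, which extends (by the trivial representation on $\pi_1(Y_2)$) to an irreducible representation of $\pi_1(Y) \cong \pi_1(Y_1) \ast \pi_1(Y_2)$, contradicting our assumption. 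Hence $Y$ is prime.

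Next, I would choose a knot $K \subset Y$ whose homology class generates $H_1(Y) \cong \Z/4\Z$ and set $E = Y \setminus \nu(K)$, so that $Y$ is recovered by Dehn filling $E$ along the meridian $\mu$ of $K$. One then studies the $\SL(2,\C)$ character variety of $E$ together with its restriction to the pillowcase $\Hom(\pi_1(\partial E), \SL(2,\C)) / \SL(2,\C)$. Under the standing assumption, no irreducible character of $E$ can have boundary restriction lying on the curve in the pillowcase determined by $\mu$. The technical heart of the argument, generalizing \cite{gsz}, is then to apply holonomy perturbation techniques in sutured and framed instanton Floer homology---to cobordisms encoding the Dehn filling, and to $E$ equipped with suitable sutures---in order to force the existence of exactly such an irreducible character, and hence of an irreducible representation of $\pi_1(Y)$.

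The main obstacle I anticipate is the pillowcase analysis at this new homology. In \cite{gsz}, the $2$-torsion and $3$-torsion cases involve filling curves which meet the singular points of the pillowcase in a controlled way; with a $\Z/4\Z$ structure, the curve corresponding to $\mu$ passes through the singular locus in a more intricate pattern, and the deformation argument producing irreducible characters from reducible starting points must be refined so as to steer the perturbation away from the degenerate central characters while still landing on the filling curve. Once this pillowcase analysis is in place, the Floer-theoretic conclusion should follow from the general framework of \cite{gsz}, combined with a dimension count in the framed instanton surgery exact triangle relating $Y$ to neighboring Dehn surgeries on $K$.
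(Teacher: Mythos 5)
Your proposal has a genuine gap at its core: the reduction to a knot exterior $E = Y \setminus \nu(K)$ for a knot $K$ generating $H_1(Y)$ is not the right decomposition, and the holonomy perturbation machinery you invoke cannot be run from it. The techniques of \cite{zentner} and \cite{gsz} produce irreducible representations by splitting $Y$ along an \emph{incompressible torus} into two pieces $M_1 \cup_{T^2} M_2$, extracting from \emph{each} piece an essential curve of characters in the pillowcase, and forcing these two curves to intersect. The existence of each essential curve is driven by a non-vanishing theorem for the instanton homology of a filling of that piece (here, the irreducibility of the longitudinal filling together with \cite[Theorem~7.21]{km-excision}, or the rank bound $\dim\KHI \geq 2$ for taut sutured complements). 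In your setup one side of the splitting is a solid torus, whose pillowcase image is a single line of abelian characters, so there is no second essential curve to intersect; and the non-vanishing input you would need concerns $Y$ itself or surgeries on your chosen $K$, for which no lower bound is available -- indeed $Y$ is assumed $\SU(2)$-abelian, so the natural candidates can vanish. The "dimension count in the framed instanton surgery exact triangle" does not supply a substitute mechanism.

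You have also omitted the step that makes the incompressible torus available in the first place. After reducing to the prime case (your first paragraph is essentially the paper's Lemma~\ref{prop:toroidal-irreducible}), one must rule out the hyperbolic case (via \cite{cs-splittings}) and the Seifert fibered case (via \cite{sz-menagerie} plus a homology computation), so that geometrization forces $Y$ to be toroidal. From there the actual argument proceeds by simplifying both sides of the torus with degree-1 pinching maps and Rong's finiteness theorem, eliminating the case where a rational longitude has order $2$ using the twisted $I$-bundle over the Klein bottle, and then carrying out the two-curve intersection argument in the pillowcase, with the case where a curve passes through $(\frac{\pi}{2},0)$ handled by the upper bound on $\dim\KHI(Y_1,K_1)$ from an area-preserving isotopy. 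None of this structure is recoverable from the single-knot-exterior framework you describe.
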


\begin{corollary} \label{cor:homology-small}
Let $Y$ be a closed, orientable 3-manifold with $|H_1(Y;\Z)| < 5$.  Then there is an irreducible representation $\pi_1(Y) \to \SL(2,\C)$ unless $Y$ is $S^3$, a lens space, or $\RP^3\#\RP^3$.
\end{corollary}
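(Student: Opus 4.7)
The plan is to dispatch the statement by a finite case analysis on the order of $H_1(Y;\Z)$, which by hypothesis lies in $\{1,2,3,4\}$, and then apply the appropriate result from the excerpt in each case. In all cases the only possible 3-manifolds with the given first homology that are excluded from the conclusion will be among $S^3$, lens spaces, or $\RP^3\#\RP^3$, matching the exception list in the corollary.

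I would proceed as follows. First, if $|H_1(Y;\Z)| = 1$ then $Y$ is an integer homology sphere, so Theorem \ref{thm:zentner} yields an irreducible representation $\pi_1(Y) \to \SL(2,\C)$ unless $Y \cong S^3$. Next, if $|H_1(Y;\Z)| = 2$ then $H_1(Y;\Z) \cong \Z/2\Z \cong (\Z/2\Z)^{\oplus 1}$, and Theorem \ref{thm:gsz-reps2} with $r=1$ produces an irreducible representation unless $Y \cong \RP^3$, which is the lens space $L(2,1)$. If $|H_1(Y;\Z)| = 3$ then $H_1(Y;\Z) \cong \Z/3\Z$, and Theorem \ref{thm:gsz-reps3} with $r=1$ yields the desired representation unless $Y \cong \pm L(3,1)$, both of which are lens spaces.

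The case $|H_1(Y;\Z)|=4$ splits by the classification of finite abelian groups of order $4$ into two subcases: $H_1(Y;\Z) \cong \Z/4\Z$ or $H_1(Y;\Z) \cong (\Z/2\Z)^{\oplus 2}$. In the cyclic subcase, the new Theorem \ref{thm:main-Z/4} of this paper gives an irreducible representation unless $Y$ is a lens space. In the $(\Z/2\Z)^{\oplus 2}$ subcase, Theorem \ref{thm:gsz-reps2} with $r=2$ gives one unless $Y \cong \RP^3 \# \RP^3$.

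No step is technically difficult once the four cited theorems are in hand; the only thing to be careful about is verifying that every exceptional manifold appearing in the hypotheses of Theorems \ref{thm:zentner}, \ref{thm:gsz-reps2}, \ref{thm:gsz-reps3}, and \ref{thm:main-Z/4} actually falls into one of the three allowed families $\{S^3, \text{lens space}, \RP^3\#\RP^3\}$. This is immediate: $S^3 = L(1,0)$ and $\RP^3 = L(2,1)$ are lens spaces, $\pm L(3,1)$ are lens spaces, the lens spaces with $|H_1|=4$ are $L(4,1)$ and $L(4,3)$, and $\RP^3 \# \RP^3$ is explicitly allowed. So the genuine mathematical content is entirely absorbed into Theorem \ref{thm:main-Z/4}, and the corollary is a bookkeeping consequence.
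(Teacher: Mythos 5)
Your proposal is correct and follows exactly the same route as the paper: a case analysis on $|H_1(Y;\Z)| \in \{1,2,3,4\}$, invoking Theorem~\ref{thm:zentner} for the trivial case, Theorem~\ref{thm:gsz-reps2} for $\Z/2\Z$ and $(\Z/2\Z)^{\oplus 2}$, Theorem~\ref{thm:gsz-reps3} for $\Z/3\Z$, and Theorem~\ref{thm:main-Z/4} for $\Z/4\Z$. Your extra bookkeeping check that each exceptional manifold lands in the allowed list is a fine (if routine) addition.
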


\begin{proof}
When $H_1(Y) = 0$, this is the main result of \cite{zentner}.  The cases where $H_1(Y)$ is $\Z/2\Z$ or $(\Z/2\Z)^{\oplus2}$ are part of Theorem~\ref{thm:gsz-reps2}, while Theorem~\ref{thm:gsz-reps3} handles the case $H_1(Y) \cong \Z/3\Z$.  This leaves only $\Z/4\Z$, which is Theorem~\ref{thm:main-Z/4}.
\end{proof}

\subsection{Strategy}

The proof of Theorem~\ref{thm:main-Z/4} follows the same broad outline as the results of \cite{zentner, gsz}.  We say that $Y$ is \emph{$\SL(2,\C)$-reducible} if every representation $\pi_1(Y) \to \SL(2,\C)$ has reducible image, and then our goal is to classify the $\SL(2,\C)$-reducible 3-manifolds $Y$ with $H_1(Y;\Z) \cong \Z/4\Z$.  By appealing to \cite{zentner} we see that any such $Y$ must be irreducible, and then the cases where $Y$ is hyperbolic or Seifert fibered are readily dispatched by appeals to \cite{cs-splittings} and \cite{sz-menagerie} respectively.  Thus by geometrization it suffices to assume that $Y$ has an incompressible torus.

\subsubsection*{Simplification of toroidal counterexamples}

We now write our $\SL(2,\C)$-reducible, toroidal manifold as $Y = M_1 \cup_{T^2} M_2$, where $M_1$ and $M_2$ are compact, irreducible 3-manifolds with incompressible torus boundaries.  If there is a degree-1 map of the form $f: M_1 \to M'_1$ that restricts to a homeomorphism $\partial M_1 \xrightarrow{\cong} \partial M'_1$, then we can extend this to a degree-1 map of the form 
\[ Y = M_1 \cup_{T^2} M_2 \to M'_1 \cup_{T^2} M_2 = Y', \]
where the extension to $M_2$ is by the identity.  (To be precise, we can use the gluing map $\partial M_1 \cong \partial M_2$ coming from $Y$ and pick an identification $\partial M'_1 \cong \partial M_2$ so that the homeomorphism $f|_{\partial M_1}: \partial M_1 \to \partial M_1'$, read through these identifications, becomes the identity on $\partial M_2$, and this obviously extends by the identity.)  As a degree-1 map, this induces a surjection $\pi_1(Y) \to \pi_1(Y')$, implying that $Y'$ must also be $\SL(2,\C)$-reducible.  The same holds for degree-1 maps $M_2 \to M'_2$, so we may simplify $Y$ by applying as many such maps as we like.

In favorable situations there is a natural source of degree-1 maps.  Each of the $M_j$ comes equipped with a \emph{rational longitude}, uniquely defined up to sign, which is a primitive class 
\[ \lambda_j \in H_1(\partial M_j; \Z) \]
whose image in rational homology generates the kernel of the rank-1 map $H_1(\partial M_j; \Q) \to H_1(M_j;\Q)$.  If we assume that $H_1(Y) \cong \Z/4\Z$, then it turns out that either
\begin{enumerate}
\item $\lambda_j$ is nullhomologous, and there is a classical degree-1 ``pinching'' map $M_j \to S^1\times D^2$ (see \cite[Proposition~4.2]{gsz} for details);
\item or $\lambda_j$ has order $2$ in $H_1(M;\Z)$, and then we constructed an analogous pinching map from $M_j$ onto the twisted $I$-bundle over the Klein bottle \cite[Proposition~4.5]{gsz}.
\end{enumerate}
We manage to eliminate the second possibility, by using an explicit understanding of the $\SU(2)$ character variety of the twisted $I$-bundle over the Klein bottle to construct non-abelian representations $\pi_1(Y') \to \SU(2)$.  Thus for the main theorem it ultimately suffices to assume that both $M_1$ and $M_2$ are homology solid tori, and that the Dehn fillings $M_1(\lambda_2)$ and $M_2(\lambda_1)$ are both lens spaces of order $4$.

\subsubsection*{Curves of characters in the pillowcase}

Once we have simplified each $M_j$ as above, we can use techniques from gauge theory to construct a non-abelian representation $\pi_1(Y) \to \SU(2)$.  It suffices to find a pair of representations $\rho_j: \pi_1(M_j) \to \SU(2)$ that agree when restricted to $\pi_1(T^2)$, so we study the images of the $\SU(2)$ character varieties $X(M_j)$ inside the \emph{pillowcase} $X(T^2)$, the $\SU(2)$ character variety of $T^2$.  If these images intersect at a point where at least one of the corresponding $\rho_j$ is non-abelian, then these $\rho_j$ will glue to give the desired non-abelian $\rho: \pi_1(Y) \to \SU(2)$.

Having arranged that each $M_j$ is a homology solid torus with a lens space filling, it follows that the pillowcase image of $M_j$ must
\begin{enumerate}
\item avoid certain line segments in the pillowcase, where non-abelian representations $\pi_1(M_j) \to \SU(2)$ would otherwise descend to non-abelian representations of the corresponding lens space; \label{i:avoid-lines}
\item and contain an essential closed curve in the twice-punctured pillowcase (which is homeomorphic to an open cylinder). \label{i:essential-curve}
\end{enumerate}
Claim~\eqref{i:essential-curve} is Proposition~\ref{prop:cyclic-curve}; to prove it, we argue that the longitudinal filling $M_j(\lambda_j)$ must be irreducible and thus have non-vanishing instanton homology, and then the existence of the essential curve follows from the techniques of \cite{zentner} and \cite{lpcz}.

We now have a pair of simple closed curves $\gamma_1$ and $\gamma_2$ in the pillowcase, corresponding respectively to $\SU(2)$ representations of $\pi_1(M_1)$ and of $\pi_1(M_2)$, and we wish to show that they intersect.  In fact, there is a slight caveat: there is exactly one point $p \in X(T^2)$, denoted $(\frac{\pi}{2},0)$ in our preferred coordinates, that might correspond to abelian representations of both of the $\pi_1(M_j)$.  But if the $\gamma_i$ meet at $p$ then they must do so transversely, and then they must also meet somewhere else, because simple closed curves on $X(T^2) \cong S^2$ cannot have a single transverse point of intersection.  So it suffices to show that $\gamma_1$ intersects $\gamma_2$.

From here the most novel part of the argument, in comparison with our work \cite{gsz}, is the case where exactly one of the $\gamma_j$ (say, $\gamma_2$) passes through the distinguished point $p$.  Writing each $M_j$ as the complement $Y_j \setminus N(K_j)$ of a knot in a homology sphere, we used a non-vanishing theorem of the form
\[ I_*^{w_j}\big( (Y_j)_0(K_j) \big) \neq 0, \]
in conjunction with the holonomy perturbation techniques of \cite{zentner}, to deduce the existence of the curves $\gamma_j$.  Here, by contrast, we apply these techniques to a lower bound
\[ \dim \KHI(Y_j,K_j) > 1 \]
on the \emph{instanton knot homology} of \cite{km-excision}.  Specifically, we use $\gamma_2$, together with the line avoided by $X(M_1)$ as in claim~\eqref{i:avoid-lines} above, to construct a simple closed curve $c$ in the pillowcase with the properties that
\begin{enumerate}
\item if the irreducible characters of $M_1$ avoid the curve $\gamma_2$ in the pillowcase, then they also avoid $c$;
\item and $c$ divides the pillowcase into two regions of equal area.
\end{enumerate}
Following \cite[\S4]{sz-pillowcase}, the holonomy perturbation argument says that the representations $\pi_1(M_1) \to \SU(2)$ along some $C^1$-small approximation of $c$ generate a chain complex for $\KHI(Y_1,K_1)$, and only one of these generators corresponds to an abelian representation, so there must be a non-abelian representation $\rho_1: \pi_1(M_1) \to \SU(2)$ whose image meets $\gamma_2$ after all.  This gives rise to the desired representation $\rho$ of $\pi_1(Y)$, completing the proof.

\subsection{Organization}

We begin with a brief review of the pillowcase in \S\ref{sec:pillowcase}.  In \S\ref{sec:toroidal-decomposition} we reduce Theorem~\ref{thm:main-Z/4} to the case of toroidal manifolds $Y$ where one side of the incompressible torus has a lens space filling, and either the other side is either a homology solid torus with a lens space filling or its rational longitude has reasonably small order.  Then in \S\ref{sec:essential-case} we rule out the second possibility by reducing it to the case where that side is the twisted $I$-bundle over the Klein bottle.  Finally, most of the paper is concentrated in \S\ref{sec:nullhomologous}, where we use instanton gauge theory to associate curves of characters in the pillowcase to each piece of $Y \setminus N(T^2)$ and thus prove that the images of their character varieties must intersect.

\section{The pillowcase} \label{sec:pillowcase}

In this section we briefly review some facts about the pillowcase orbifold, as discussed for example in \cite[\S3.1]{lpcz} or \cite[\S3.1]{gsz}.

Given a manifold $M$, we can define the $\SU(2)$ representation and character varieties of $M$ by
\begin{align*}
R(M) &= \Hom(\pi_1(M), \SU(2)), \\
X(M) &= R(M) / \SO(3),
\end{align*}
where $\SO(3) = \SU(2) / \{\pm1\}$ acts on $R(M)$ by conjugation.  We write $R^\irr(M)$ or $X^\irr(M)$ for the subsets of each variety consisting of irreducible representations.  If $M$ is the exterior of a knot $K \subset Y$ then we will also write $R(Y,K)$ to mean $R(M)$ and so on.  We will say that $M$ is \emph{$\SU(2)$-abelian} if $R(M)$ consists entirely of representations with abelian image.

In the case $M = T^2$, we have $\pi_1(T^2) \cong \Z^2$, say with generators $\mu$ and $\lambda$.  A representation $\rho: \pi_1(T^2) \to \SU(2)$ is then determined by a pair of commuting matrices $\rho(\mu)$ and $\rho(\lambda)$, and since these are in $\SU(2)$ they can be simultaneously diagonalized, so that up to conjugacy we have
\begin{align} \label{eq:pillowcase-coords}
\rho(\mu) &= \begin{pmatrix} e^{i\alpha} & 0 \\ 0 & e^{-i\alpha} \end{pmatrix}, &
\rho(\lambda) &= \begin{pmatrix} e^{i\beta} & 0 \\ 0 & e^{-i\beta} \end{pmatrix}
\end{align}
for some $\alpha,\beta \in \R/2\pi\Z$.  This uniquely determines the conjugacy class of $\rho$ up to replacing $(\alpha,\beta)$ with $(-\alpha,-\beta)$, so we have
\[ X(T^2) \cong \frac{(\R/2\pi\Z)\times(\R/2\pi\Z)}{(\alpha,\beta) \sim (2\pi-\alpha,2\pi-\beta)}. \]
This quotient is homeomorphic to a sphere, but has four orbifold points of order $2$ (where $\alpha,\beta \in \pi\Z$), so it is known as the \emph{pillowcase orbifold}.  In practice we will use a fundamental domain for this and write
\[ X(T^2) \cong \frac{[0,\pi]_\alpha \times [0,2\pi]_\beta}{ \left\{ \begin{array}{c} (0,\beta) \sim (0,2\pi-\beta) \\ (\pi,\beta) \sim (\pi,2\pi-\beta) \\ (\alpha,0) \sim (\alpha,2\pi) \end{array} \right\} }. \]

Now if $M$ is a compact 3-manifold with boundary $T^2$, the inclusion
\[ i: T^2 \cong \partial M \hookrightarrow M \]
gives a homomorphism $i_*: \pi_1(T^2) \to \pi_1(M)$ and hence a pullback map
\[ i^*: X(M) \to X(T^2). \]
We refer to the image $i^*X(M)$ or $i^*X^\irr(M)$ as the \emph{pillowcase image} of $M$.

\begin{example} \label{ex:reducible-nullhomologous}
In the case where $M \cong Y \setminus N(K)$ is the complement of a nullhomologous knot $K \subset Y$, we use the meridian $\mu$ and longitude $\lambda$ of $K$ as our preferred generators of $\pi_1(T^2)$, giving rise to coordinates $(\alpha,\beta)$ on the pillowcase as in \eqref{eq:pillowcase-coords}.  Any abelian representation $\rho: \pi_1(M) \to \SU(2)$ will factor through $H_1(M)$, where $[\lambda]=0$, and thus satisfy $\rho(\lambda) = 1$.  Thus in $i^*X(Y,K)$ the abelian representations will all lie on the line $\beta \equiv 0 \pmod{2\pi}$, and conversely any representation off that line must be non-abelian.
\end{example}

\begin{lemma}[{\cite[Lemma~3.1]{lpcz}}] \label{lem:avoid-edges}
Let $Y$ be an $\SU(2)$-abelian integer homology sphere, and let $K \subset Y$ be a knot.  Then there is some positive $\delta = \delta(K) > 0$ such that the pillowcase image
\[ i^*X^\irr(Y,K) \subset X(T^2) \]
is contained in the region $\delta \leq \alpha \leq \pi-\delta$.
\end{lemma}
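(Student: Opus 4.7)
The plan is to argue by contradiction. Suppose there is a sequence $[\rho_n] \in X^\irr(Y,K)$ whose pillowcase images $(\alpha_n,\beta_n)$ satisfy $\alpha_n \to 0$; the case $\alpha_n \to \pi$ will be handled analogously at the end. Since the $\SU(2)$ character variety $X(M)$ of $M = Y \setminus N(K)$ is compact, I can pass to a convergent subsequence $[\rho_n] \to [\rho_\infty]$ in $X(M)$, and after suitably conjugating representatives I may assume $\rho_n \to \rho_\infty$ in $R(M)$. The hypothesis $\alpha_n \to 0$ forces $\rho_\infty(\mu) = I$, so $\rho_\infty$ descends to a representation of $\pi_1(M)/\langle\langle \mu \rangle\rangle \cong \pi_1(Y)$. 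Because $Y$ is $\SU(2)$-abelian with $H_1(Y)=0$, this induced representation is trivial, and hence so is $\rho_\infty$.

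To reach a contradiction I would show that the trivial representation admits a neighborhood in $R(M)$ consisting only of abelian representations, forcing $\rho_n$ to be reducible for large $n$. At $\rho_\infty = 1$ the Zariski tangent space to $R(M)$ is $Z^1(\pi_1(M);\mathfrak{su}(2)) = \Hom(H_1(M),\mathfrak{su}(2)) \cong \mathfrak{su}(2)$, which has real dimension $3$ since $H_1(M) = \Z$. The quadratic obstruction to smoothness lies in $H^2(M;\mathfrak{su}(2)) = H^2(M;\R)\otimes\mathfrak{su}(2)$, and this vanishes because $M$ is the complement of a knot in a $\Z$-homology sphere. Hence $R(M)$ is smooth of real dimension $3$ at $\rho_\infty$. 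On the other hand, the abelian representations form a smooth $3$-dimensional subvariety $R^\ab(M) = \Hom(H_1(M),\SU(2)) \cong \SU(2)$ of $R(M)$ passing through $\rho_\infty$, whose tangent space there agrees with the one above. Therefore $R(M)$ and $R^\ab(M)$ coincide in a neighborhood of $\rho_\infty$, producing the desired contradiction.

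For $\alpha_n \to \pi$, the limit $\rho_\infty$ has $\rho_\infty(\mu) = -I$, so composing with the projection $\SU(2) \to \SO(3)$ yields an $\SO(3)$-representation of $\pi_1(Y)$. Since $Y$ is a $\Z$-homology sphere, $H^2(Y;\Z/2)=0$, so every $\SO(3)$-representation of $\pi_1(Y)$ lifts to $\SU(2)$; by the $\SU(2)$-abelianness of $Y$ together with $H_1(Y)=0$ that lift is trivial, so $\rho_\infty$ itself takes values in $\{\pm I\}$ and is the central representation sending $\mu$ to $-I$. Because the adjoint action at any central point is trivial, the same tangent space and obstruction computation applies verbatim at this $\rho_\infty$, and the same local identification $R(M) = R^\ab(M)$ gives the contradiction. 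The main subtle step throughout is precisely this local identification near a central representation, which ultimately rests on the vanishing of $H^2(M;\R)$ for a knot complement in a homology sphere.
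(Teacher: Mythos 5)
The paper offers no proof of this lemma---it is quoted directly from \cite{lpcz}---so there is no internal argument to compare against; what you have written is an independent proof, and it is correct. The compactness step and the identification of the limit $\rho_\infty$ as a central representation (trivial when $\alpha_n \to 0$, the character $\mu \mapsto -I$ when $\alpha_n \to \pi$) are right, and the $\SO(3)$-lifting in the second case is justified because the obstruction lies in $H^2(\pi_1(Y);\Z/2\Z)$, which injects into $H^2(Y;\Z/2\Z)=0$; alternatively, you could simply multiply $\rho_\infty$ by the central character $\mu \mapsto -1$ and reduce to the first case. The deformation-theoretic step is the crux and it does go through: since $M$ deformation retracts onto a $2$-complex $X$ with a single $0$-cell, the relator map $F:\SU(2)^g\to\SU(2)^r$ cutting out $R(M)$ has differential at a central $\rho_\infty$ whose cokernel is $H^2(X;\R)\otimes\mathfrak{su}(2)\cong H^2(M;\R)\otimes\mathfrak{su}(2)=0$, so the implicit function theorem already gives that $R(M)$ is smooth of dimension $\dim Z^1=3$ there, and invariance of domain then forces $R(M)$ to coincide near $\rho_\infty$ with the smooth $3$-dimensional abelian locus $\Hom(H_1(M),\SU(2))\cong\SU(2)$, contradicting the irreducibility of the $\rho_n$. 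Two points of precision worth recording: if you argue via obstruction calculus rather than the implicit function theorem, it is \emph{all} higher obstructions, not only the quadratic one, whose vanishing you need (they all live in the same $H^2$, so this is harmless); and the obstruction space should be taken to be the cohomology of a $2$-spine of $M$, or equivalently of $\pi_1(M)$, rather than of the $3$-manifold itself---here this costs nothing since $H^2(\pi_1(M);\R)$ injects into $H^2(M;\R)=0$.
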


In the situation of Lemma~\ref{lem:avoid-edges}, we can lift $i^*X(Y,K)$ to the \emph{cut-open pillowcase}
\[ \cC = [0,\pi]_{\alpha} \times (\R/2\pi\Z), \]
because Example~\ref{ex:reducible-nullhomologous} and Lemma~\ref{lem:avoid-edges} tell us that the image $i^*X(Y,K)$ only meets the lines $\alpha=0$ and $\alpha=\pi$ in the pillowcase at the points $(\alpha,\beta) = (0,0)$ and $(\pi,0)$, both of which have a unique preimage in $\cC$.  Lidman, Pinz\'on-Caicedo, and Zentner \cite{lpcz} proved the following, generalizing a theorem of Zentner \cite[Theorem~7.1]{zentner} in the case $Y=S^3$.

\begin{theorem}[\cite{lpcz}] \label{thm:pillowcase-alternative}
Let $Y$ be an $\SU(2)$-abelian integer homology sphere, and let $K \subset Y$ be a knot with irreducible, boundary-incompressible exterior.  Then the image
\[ i^*X(Y,K) \subset \cC \]
in the cut-open pillowcase contains a topologically embedded curve in the interior of $\cC$ that is homologically nontrivial in $H_1(\cC;\Z) \cong \Z$.
\end{theorem}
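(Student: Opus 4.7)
The plan is to combine holonomy perturbation techniques with a non-vanishing theorem for instanton Floer homology in order to sweep out a curve of characters that winds once around $\cC$.  Write $M = Y \setminus N(K)$ and equip $\partial M$ with the meridian--longitude basis, so that the pillowcase coordinates $(\alpha,\beta)$ are as in \eqref{eq:pillowcase-coords}.  The aim is to produce, for a dense set of slopes $\beta_0 \in \R/2\pi\Z$, an irreducible character of $M$ whose pillowcase image has $\beta$-coordinate equal to $\beta_0$, and then to assemble these into a topologically embedded homologically nontrivial curve in $\inr(\cC)$.

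For the Floer-theoretic input, I would use that $M$ is irreducible and boundary-incompressible to conclude that the longitudinal Dehn filling $Y_0(K)$ is itself irreducible, and that $H_1(Y_0(K);\Z)\cong \Z$ since $Y$ is an integer homology sphere.  The Kronheimer--Mrowka non-vanishing theorem for instanton Floer homology of such manifolds then forces the existence of an irreducible representation $\pi_1(Y_0(K))\to \SU(2)$, which in particular gives one point of $i^*X^{\irr}(Y,K)$ on the line $\beta=0$.  Because $Y$ is $\SU(2)$-abelian, the chain-level generators of the relevant Floer group cannot all be pulled back from abelian characters of $M$.

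Next I would invoke the holonomy perturbation setup of \cite[\S7]{zentner}.  For a conjugation-invariant class function $h$ supported in a collar of $\partial M$ and a small perturbation parameter $t$, the perturbed flat equations on $M$ have a critical set whose pillowcase image lies on an arc of the form $\beta=\phi(\alpha,t)$, with $\phi(\alpha,0)\equiv 0$; as $t$ varies over a suitable interval these arcs sweep out every horizontal slope $\beta_0 \in \R/2\pi\Z$.  The non-vanishing established above, together with the fact that at most one abelian character of $M$ contributes on each perturbed arc, forces a non-abelian perturbed critical point for each $t$.  Letting $t\to 0$ (and extracting a convergent subsequence of characters, which is possible by compactness of $X(M)$) then yields an irreducible character of $M$ whose image lies on each horizontal line in the pillowcase.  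Lemma~\ref{lem:avoid-edges} confines all of these characters to the interior of $\cC$.

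The main technical obstacle is upgrading this pointwise existence statement into an honestly embedded curve.  Since $X^{\irr}(M)$ is a real algebraic variety and $i^*$ is algebraic, the image $i^*X^{\irr}(Y,K)$ is semi-algebraic in $\cC$, and the argument above shows that its projection to the $\beta$-coordinate $\R/2\pi\Z$ has full image.  A semi-algebraic decomposition of $i^*X^{\irr}(Y,K)\cap \inr(\cC)$ together with a small resolution of any self-intersections then isolates a one-dimensional piece whose class in $H_1(\cC;\Z)$ is a generator, yielding the desired topologically embedded curve.
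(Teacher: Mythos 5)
Your overall strategy (Floer non-vanishing for $Y_0(K)$ plus holonomy perturbations) is in the spirit of the cited results of Zentner and Lidman--Pinz\'on-Caicedo--Zentner that the paper assembles, but two steps have genuine gaps. First, your route to $I^w_*(Y_0(K))\neq 0$ begins with the claim that irreducibility and boundary-incompressibility of $M=Y\setminus N(K)$ force $Y_0(K)$ to be irreducible. For knots in general homology spheres this is a delicate and unjustified assertion: compare Proposition~\ref{prop:zero-irreducible}, which establishes irreducibility of $Y_0(K)$ only under the additional hypothesis of a lens space surgery, via Boyer--Zhang and Baker--Buck--Lecuona. The paper's proof avoids this entirely by using the $\SU(2)$-abelian hypothesis to get $I_*(Y)=0$ and then citing \cite[Theorem~1.3]{lpcz}, which deduces $I^w_*(Y_0(K))\neq 0$ from $I_*(Y)=0$ together with irreducibility and boundary-incompressibility of the exterior, with no need to know that $Y_0(K)$ is irreducible. (A smaller point: generators of $I^w_*(Y_0(K))$ for nontrivial $w$ correspond to characters with $\rho(\lambda)=-1$, i.e.\ to points on $\beta=\pi$, not $\beta=0$.)

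Second, and more seriously, your final step does not produce the curve. Surjectivity of the $\beta$-projection of $i^*X^\irr(Y,K)$ onto $\R/2\pi\Z$ does not imply that this set carries a nonzero class in $H_1(\cC;\Z)$: the embedded semi-algebraic arc $t\mapsto(\tfrac{\pi}{2}+\tfrac{t}{10},\,2\pi t)$, $t\in[0,1]$, is compact, has surjective $\beta$-projection, and contains no homologically essential closed curve. One also cannot ``resolve self-intersections'' of the image of a character variety; the embedded curve must be located inside the given set. The mechanism that actually closes the curve up is the dichotomy of the pillowcase alternative \cite[Theorem~3.5]{lpcz}: either the image contains an essential embedded curve, or else there is a perturbation curve in the pillowcase disjoint from the irreducible locus, in which case the perturbed critical points would compute $I^w_*(Y_0(K))=0$, a contradiction. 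That global Floer-theoretic input, not a point-set extraction from a surjective projection, is the heart of the cited theorem. (Your boundary-avoidance step, by contrast, is essentially fine: Lemma~\ref{lem:avoid-edges} confines the closure of the irreducible locus to the closed region $\delta\leq\alpha\leq\pi-\delta$; the paper needs its deformation-retraction argument only because the curve supplied by \cite{lpcz} may a priori run along $\beta\equiv 0$ into the corners.)
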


\begin{proof}
Since $Y$ is an $\SU(2)$-abelian homology sphere, its instanton homology $I_*(Y)$ is zero.  Then by \cite[Theorem~1.3]{lpcz}, the fact that $Y \setminus N(K)$ is irreducible and boundary-incompressible implies the non-vanishing of $I^w_*(Y_0(K))$, where $w \in H^2(Y_0(K);\Z/2\Z) \cong \Z/2\Z$ is nonzero.  Now we can apply the ``pillowcase alternative'' of \cite[Theorem~3.5]{lpcz} to find the desired curve $\gamma$ in $i^*X(Y,K)$.

The only claim that needs further justification is that $\gamma$ avoids the boundary curves $\{\alpha=0\}$ and $\{\alpha=\pi\}$.  We take the constant $\delta > 0$ provided by Lemma~\ref{lem:avoid-edges} and note that by Example~\ref{ex:reducible-nullhomologous}, any point $(\alpha,\beta) \in \gamma$ with either $0 \leq \alpha < \delta$ or $\pi-\delta < \alpha \leq \pi$ must satisfy $\beta\equiv0\pmod{\pi}$.  If the intersection
\[ I_1 = \gamma \cap \big( [0,\delta/2] \times (\R/2\pi\Z) \big) \subset \cC \]
is nonempty and we let
\[ \alpha_1 = \inf \{ \alpha \in [0,\delta/2] \mid (\alpha,0) \in \gamma \}, \]
then $I_1$ must therefore be precisely the line segment $[\alpha_1,\delta/2] \times \{0\} \subset \cC$: we have $(\alpha_1,0) \in \gamma$ since $\gamma$ is closed, and if any point of $(\alpha_1,\delta/2] \times \{0\}$ were missing from $\gamma$ then $\gamma$ would be disconnected.  By the same argument the intersection
\[ I_2 = \gamma \cap \big( [\pi-\delta/2,\pi] \times (\R/2\pi\Z) \big) \subset \cC \]
must either be empty or have the form $[\pi-\delta/2,\alpha_2] \times \{0\}$.  We let $f_s: [0,\pi] \to [0,\pi]$ be a deformation retraction of $[0,\pi]$ onto $[\delta/2,\pi-\delta/2]$, and then
\[ \cC = [0,\pi] \times (\R/2\pi\Z) \xrightarrow{f_s \times \mathrm{id}} [0,\pi] \times (\R/2\pi\Z) = \cC \]
is a deformation retraction of $\cC$ that restricts to a deformation retraction of $\gamma$.  We replace $\gamma$ with its image $f_1(\gamma) \subset i^*X(Y,K)$, which is the desired curve because it no longer contains any point $(\alpha,\beta)$ with $\alpha < \delta/2$ or $\alpha > \pi - \delta/2$.
\end{proof}

\section{Decompositions of toroidal manifolds} \label{sec:toroidal-decomposition}

In this section, which parallels \cite[\S7]{gsz}, we show that a toroidal manifold $Y$ with homology $H_1(Y) \cong \Z/p^e\Z$ of prime power order must be a union of two relatively simple pieces.  Assuming that such a manifold is $\SL(2,\C)$-reducible, Proposition~\ref{prop:toroidal-simple} will provide a minimal example with respect to the partial ordering given by degree-1 maps.  If we further assume that $H_1(Y) \cong \Z/4\Z$, then we will show in Proposition~\ref{prop:rule-out-4b} that both pieces must have nullhomologous rational longitudes, so that we can focus on this case afterward.

\begin{lemma} \label{lem:toroidal-torsion}
Let $Y$ be a closed, connected, orientable, toroidal 3-manifold with $H_1(Y;\Z) \cong \Z/p^e\Z$ for some prime $p$ and integer $e\geq 0$.  Then we can write
\[ Y = M_1 \cup_{T^2} M_2, \]
where $M_1$ and $M_2$ are compact manifolds with incompressible torus boundary satisfying
\begin{align*}
H_1(M_1;\Z) &\cong \Z, \\
H_1(M_2;\Z) &\cong \Z \oplus \Z/p^f\Z
\end{align*}
for some integer $f$ with $0 \leq f \leq e$.
\end{lemma}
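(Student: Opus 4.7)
The plan is to cut $Y$ along a chosen incompressible torus $T \subset Y$ and extract the required structure from Mayer--Vietoris. First I would verify that any such $T$ is separating. Universal coefficients gives $H_2(Y;\Z) \cong \Hom(H_1(Y;\Z),\Z)$, which vanishes because $H_1(Y;\Z) \cong \Z/p^e\Z$ is torsion; and any non-separating orientable surface in an orientable 3-manifold carries a nonzero class in $H_2(Y;\Z)$. Hence $T$ separates and I may write $Y = M_1 \cup_{T^2} M_2$ with each $M_i$ a compact manifold with incompressible torus boundary.

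Next I would feed this decomposition into the Mayer--Vietoris sequence in degree $1$. The vanishing of $H_2(Y;\Z)$ kills the connecting map $H_2(Y) \to H_1(T^2)$, so the inclusion-induced map
\[
\phi \colon H_1(T^2) \to H_1(M_1) \oplus H_1(M_2)
\]
is injective. The next portion of the sequence reads
\[
H_1(M_1) \oplus H_1(M_2) \to H_1(Y) \to H_0(T^2) \to H_0(M_1) \oplus H_0(M_2),
\]
where the last arrow is the diagonal inclusion $\Z \hookrightarrow \Z^2$. Thus the image of $H_1(Y)$ in $H_0(T^2)$ is zero, so $\coker(\phi) \cong H_1(Y) \cong \Z/p^e\Z$, yielding the short exact sequence
\[
0 \to \Z^2 \to H_1(M_1) \oplus H_1(M_2) \to \Z/p^e\Z \to 0.
\]

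From here a rank count gives $\rank H_1(M_1) + \rank H_1(M_2) = 2$, while half-lives-half-dies applied to each $M_i$ (a compact orientable 3-manifold with boundary $T^2$) forces $\rank H_1(M_i;\Q) \geq 1$. Hence each rank equals $1$, and I may write $H_1(M_i;\Z) \cong \Z \oplus T_i$ with $T_i$ finite. Because the subgroup $\Z^2$ in the short exact sequence is torsion-free, the induced map on torsion subgroups
\[
T_1 \oplus T_2 \;\hookrightarrow\; \Z/p^e\Z
\]
is injective. The target is cyclic of prime-power order, so $T_1 \oplus T_2$ must itself be cyclic, which forces one summand to vanish. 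After relabeling so that $M_1$ is the torsion-free piece, we obtain $T_1 = 0$ and $T_2 \cong \Z/p^f\Z$ for some $0 \leq f \leq e$, matching the statement of the lemma.

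There is no real geometric obstacle here: the one point requiring care is correctly identifying the Mayer--Vietoris cokernel as all of $H_1(Y)$ (which uses the injectivity of $H_0(T^2) \to H_0(M_1) \oplus H_0(M_2)$), after which the torsion analysis is a clean consequence of the cyclicity of $\Z/p^e\Z$. The remaining inputs --- existence of an incompressible torus and the half-lives-half-dies inequality --- are both standard background.
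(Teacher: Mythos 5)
Your proposal is correct and follows essentially the same route as the paper: separate along the torus using $H_2(Y;\Z)=0$, apply Mayer--Vietoris to get the short exact sequence $0 \to \Z^2 \to H_1(M_1)\oplus H_1(M_2) \to \Z/p^e\Z \to 0$, pin down the ranks via half-lives-half-dies, and use the torsion-freeness of the image of $H_1(T^2)$ to inject $T_1\oplus T_2$ into the cyclic group $\Z/p^e\Z$, forcing one torsion summand to vanish. The only difference is that you spell out the separating-torus step and the identification of the cokernel with $H_1(Y)$ in slightly more detail than the paper does.
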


\begin{proof}
A torus in a rational homology sphere must be separating, so we can write $Y = M_1 \cup_{T^2} M_2$ where $M_1$ and $M_2$ are glued along their incompressible torus boundaries.  Since $H_2(Y;\Z) \cong 0$, the Mayer--Vietoris sequence for this decomposition reads
\[ 0 \to \underbrace{H_1(T^2)}_{\cong\Z^2} \xrightarrow{i_*} H_1(M_1) \oplus H_1(M_2) \xrightarrow{j_*} \underbrace{H_1(Y)}_{\cong \Z/p^e\Z} \to 0. \]
Each map $H_1(T^2) \to H_1(M_i)$ induced by inclusion has rank $1$, so $b_1(M_k) \geq 1$ for each $k$, but examining the sequence over $\Q$ shows that $b_1(M_1) + b_1(M_2) = 2$ and so in fact $b_1(M_k) = 1$.  This means that we can write
\[ H_1(M_k;\Z) \cong \Z \oplus T_k \qquad (k=1,2) \]
for some torsion groups $T_k$.

The nonzero elements of $T_1 \oplus T_2$ cannot lie in the image of the injective map $i_*$, since that image is free abelian, so by exactness we have an injection
\[ j_*|_{T_1\oplus T_2}: T_1 \oplus T_2 \hookrightarrow H_1(Y) \cong \Z/p^e\Z. \]
But $\Z/p^e\Z$ cannot be written as a direct sum of two nontrivial groups, so we conclude that one of the $T_k$ must be zero; without loss of generality we label $M_1$ and $M_2$ so that $T_1 = 0$.   Then $T_2$ might be nonzero, but it does inject into $\Z/p^e\Z$, so we must have $T_2 \cong \Z/p^f\Z$ where $0 \leq f \leq e$.
\end{proof}

\begin{lemma} \label{lem:toroidal}
Suppose that $Y$ is $\SL(2,\C)$-reducible and that $H_1(Y)$ is a finite cyclic group, but that $Y$ is not a lens space.  Then $Y$ has an incompressible torus.
\end{lemma}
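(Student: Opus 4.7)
The plan is to argue by contradiction: I assume $Y$ is atoroidal and construct an irreducible representation $\pi_1(Y) \to \SL(2,\C)$, contradicting the hypothesis. By geometrization, a closed, orientable, atoroidal $3$-manifold is either reducible, hyperbolic, Seifert fibered, or modeled on $\mathrm{Sol}$; since every closed orientable $\mathrm{Sol}$ manifold is a torus bundle over $S^1$ with Anosov monodromy and hence toroidal, that last case is vacuous, and I treat the remaining three possibilities in turn.

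For the reducible case, write $Y = Y_1 \# Y_2$ as a nontrivial connected sum, so that $\pi_1(Y) \cong \pi_1(Y_1) * \pi_1(Y_2)$ and Mayer--Vietoris yields $H_1(Y;\Z) \cong H_1(Y_1;\Z) \oplus H_1(Y_2;\Z)$. Since $H_1(Y;\Z)$ is finite cyclic, either one summand vanishes or both summands are nontrivial cyclic of coprime orders. In the first subcase the corresponding $Y_i$ is a nontrivial integer homology sphere, and Theorem~\ref{thm:zentner} supplies an irreducible representation $\pi_1(Y_i) \to \SL(2,\C)$ which I extend by any representation of the other factor to an irreducible representation of the free product. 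In the second subcase each factor admits a nontrivial $\SU(2)$ representation via $\pi_1(Y_i) \twoheadrightarrow H_1(Y_i;\Z)$ landing in the diagonal maximal torus; after conjugating one of them so that the two images no longer share an invariant line in $\C^2$, the combined representation of the free product is irreducible. Either subcase contradicts $\SL(2,\C)$-reducibility, so $Y$ must be irreducible.

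With $Y$ irreducible and atoroidal, geometrization now forces $Y$ to be geometric with one of the hyperbolic or Seifert fibered geometries. If $Y$ is hyperbolic, then the discrete faithful representation $\pi_1(Y) \to \mathrm{PSL}(2,\C)$ lifts to an irreducible $\SL(2,\C)$ representation, and more generally the techniques of \cite{cs-splittings} produce such a representation. If $Y$ is Seifert fibered with finite cyclic $H_1(Y;\Z)$ but not a lens space, then the classification of $\SL(2,\C)$ representations of small Seifert fibered spaces in \cite{sz-menagerie} provides an irreducible representation of $\pi_1(Y)$. In both possibilities the hypothesis of $\SL(2,\C)$-reducibility is contradicted, so $Y$ must in fact contain an incompressible torus.

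The main obstacle I expect lies in the connected sum case: one must select the conjugating element in $\SU(2)$ carefully so that two representations with image in a common diagonal torus together yield an irreducible, not merely nontrivial, representation of the free product $\pi_1(Y_1) * \pi_1(Y_2)$, and then verify that no common invariant line survives in $\C^2$. The hyperbolic and Seifert fibered cases, by contrast, reduce essentially to citations of \cite{cs-splittings} and \cite{sz-menagerie} exactly as signaled in the introduction.
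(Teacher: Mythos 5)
Your overall architecture---geometrization, \cite{cs-splittings} for the hyperbolic case, \cite{sz-menagerie} for the Seifert fibered case---is the same as the paper's, but two of your three cases have genuine gaps. In the Seifert fibered case, \cite{sz-menagerie} does not simply hand you an irreducible representation for every non-lens-space small Seifert fibered space with finite cyclic $H_1$: what it gives (after you first observe that an $\SL(2,\C)$-reducible rational homology sphere is $\SU(2)$-abelian, a reduction you also need to make explicit) is that an $\SU(2)$-abelian Seifert fibered rational homology sphere is either a lens space or has base orbifold $S^2(2,4,4)$ or $S^2(3,3,3)$. Those two exceptional families really are $\SU(2)$-abelian, so no representation-theoretic citation can dispose of them; the actual content of the paper's proof is an explicit presentation-matrix computation showing that in either family $H_1$ surjects onto $\Z/2\Z\oplus\Z/4\Z$ or $\Z/3\Z\oplus\Z/3\Z$ and hence is never cyclic. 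That computation is the step your write-up is missing.

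The more serious problem is the reducible case, which you were right to flag as the main obstacle. In your second subcase, if one summand has $H_1(Y_i)\cong\Z/2\Z$, then the unique nontrivial representation of $H_1(Y_i)$ into the diagonal torus of $\SL(2,\C)$ sends the generator to $-I$, which is central and fixes every line of $\C^2$; no conjugation can then make the glued representation irreducible, and its image is abelian. This is not a fixable defect of your construction: $Y=\RP^3\#L(3,1)$ has $H_1\cong\Z/6\Z$, is not a lens space, contains no incompressible torus (by Kurosh, $\Z/2\Z\ast\Z/3\Z$ has no $\Z^2$ subgroup), and is $\SL(2,\C)$-reducible, because every $A\in\SL(2,\C)$ with $A^2=I$ equals $\pm I$, so every representation of $\Z/2\Z\ast\Z/3\Z$ has abelian image. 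So the statement cannot be proved in the stated generality once reducible $Y$ are allowed; the paper's own proof of this lemma tacitly assumes $Y$ is prime, and it quarantines reducibility in Lemma~\ref{prop:toroidal-irreducible}, which excludes exactly the bad orders $|H_1(Y)|=2e$ with $e\geq3$ odd (automatic in the prime-power settings where Lemma~\ref{lem:toroidal} is actually invoked). Your argument does work when both summands have order greater than $2$, and your first subcase via Theorem~\ref{thm:zentner} is fine, but as written the $\Z/2\Z$ subcase is a hole that cannot be patched without an additional hypothesis on $|H_1(Y)|$.
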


\begin{proof}
Since $Y$ is $\SL(2,\C)$-reducible it cannot be hyperbolic \cite[Proposition~3.1.1]{cs-splittings}.  An $\SL(2,\C)$-reducible rational homology sphere is also $\SU(2)$-abelian, so if $Y$ is not a lens space then it can only be Seifert fibered with $H_1(Y)$ finite if its base orbifold is either $S^2(2,4,4)$ or $S^2(3,3,3)$ \cite[Theorem~1.2]{sz-menagerie}.  We adapt the proof of \cite[Lemma~8.2]{gsz} to rule these out: if
\[ Y \cong S^2((2,\beta_1),(4,\beta_2),(4,\beta_3)) \]
then there is a surjection 
\[ H_1(Y) \cong \coker\begin{pmatrix} 2 & 0 & 0 & \beta_1 \\ 0 & 4 & 0 & \beta_2 \\ 0 & 0 & 4 & \beta_3 \\ 1 & 1 & 1 & 0 \end{pmatrix} \twoheadrightarrow \Z/2\Z \oplus \Z/4\Z \]
given by sending the respective generators to $(1,2)$, $(1,1)$, $(0,1)$, and $(0,0)$, while if
\[ Y = S^2((3,\beta_1),(3,\beta_2),(3,\beta_3)) \]
then we have a surjection
\[ H_1(Y) \cong \coker\begin{pmatrix} 3 & 0 & 0 & \beta_1 \\ 0 & 3 & 0 & \beta_2 \\ 0 & 0 & 3 & \beta_3 \\ 1 & 1 & 1 & 0 \end{pmatrix} \twoheadrightarrow \Z/3\Z \oplus \Z/3\Z \]
sending the generators to $(1,0)$, $(0,1)$, $(2,2)$, and $(0,0)$.  In either case the first homology can't be cyclic, so this rules out all of the Seifert fibered possibilities, and now we conclude by the geometrization theorem that $Y$ must be toroidal.
\end{proof}

Supposing that $Y = M_1 \cup M_2$ as in Lemma~\ref{lem:toroidal-torsion} is $\SL(2,\C)$-reducible, we wish to arrange for the manifolds $M_1$ and $M_2$ to be as simple as possible.  The following theorem of Rong will help us achieve this goal:

\begin{theorem}[{\cite[Theorem~3.9]{rong}}] \label{thm:rong}
Suppose we have an infinite sequence of closed, oriented $3$-manifolds and degree-1 maps between them, of the form
\[ Y_1 \xrightarrow{f_1} Y_2 \xrightarrow{f_2} Y_3 \xrightarrow{f_3} \cdots. \]
Then the map $f_i$ is a homotopy equivalence for all sufficiently large $i$.
\end{theorem}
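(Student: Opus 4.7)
The plan is to find a complexity function $c$ on closed oriented $3$-manifolds, valued in some well-ordered set $W$, that is non-increasing under degree-$1$ maps and strictly decreasing unless the map is a homotopy equivalence. With such a $c$ in hand, the sequence $c(Y_1) \ge c(Y_2) \ge \cdots$ must stabilise after finitely many steps, and from that point on each $f_i$ is forced to be a homotopy equivalence.

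For the \emph{hyperbolic part} of the complexity I would use the simplicial volume $\|Y\|$, which is non-increasing under degree-$1$ maps by Gromov. Via geometrization and Soma's additivity theorem, $\|Y\| = v_3 \sum_k \mathrm{vol}(H_k)$, where $H_k$ runs over the hyperbolic pieces of the JSJ decomposition of $Y$ and $v_3$ is the volume of a regular ideal hyperbolic tetrahedron. The set of volumes of finite-volume hyperbolic $3$-manifolds is well-ordered in $\R_{>0}$ by Thurston--J\o rgensen, and the number of JSJ tori of the $Y_i$ is uniformly bounded, e.g.\ by applying Kneser--Haken finiteness through the surjections $\pi_1(Y_1) \twoheadrightarrow \pi_1(Y_i)$ induced by degree-$1$ maps. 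So the multiset of hyperbolic volumes appearing in $Y_i$ lies in a well-ordered set and is eventually constant. Once it has stabilised, each $f_i$ restricts on each hyperbolic piece to a volume-preserving degree-$1$ map between finite-volume hyperbolic $3$-manifolds and is therefore $\pi_1$-injective; by Mostow rigidity it is homotopic to an isometry, hence a homotopy equivalence on those pieces.

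For the \emph{graph-manifold part} I would append lexicographically an integer tuple recording the number of prime summands in the Kneser--Milnor decomposition, the number of JSJ tori, and, on each Seifert fibered piece, numerical invariants such as $-\chi^{\mathrm{orb}}$ of the base orbifold together with its cone-point orders. Using the classification of degree-$1$ maps between Seifert fibered spaces (which after homotopy can be put into fiber-preserving form relative to incompressible boundary), one checks that these quantities are non-increasing, and strictly decreasing unless the map induces a $\pi_1$-isomorphism on each Seifert piece. Combining this lexicographically with $\|Y\|$ gives a complexity with the desired properties; once it stabilises, every $f_i$ induces a $\pi_1$-isomorphism on each geometric piece of $Y_i$, and by Waldhausen's theorem for Haken manifolds together with the naturality of the Kneser--Milnor and JSJ decompositions, such a collection of isomorphisms assembles into a homotopy equivalence $Y_i \to Y_{i+1}$.

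The step I expect to be the main obstacle is the strict monotonicity on the Seifert fibered pieces: one has to verify carefully, via the classification of degree-$1$ maps of Seifert manifolds, that no such map can strictly decrease the base-orbifold complexity without being a $\pi_1$-isomorphism, and to exclude "internal" degree-$1$ self-domination phenomena of a Seifert piece. The hyperbolic case, by contrast, falls out almost immediately from Mostow rigidity and the well-ordering of finite hyperbolic volumes.
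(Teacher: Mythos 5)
This statement is quoted from Rong \cite[Theorem~3.9]{rong}; the paper offers no proof of its own, so the relevant comparison is with Rong's published argument. Your strategy --- a complexity valued in a well-ordered set, non-increasing under degree-one maps and strictly decreasing unless the map is a homotopy equivalence, built from simplicial volume plus the Thurston--J{\o}rgensen well-ordering on the hyperbolic side and numerical invariants of the base orbifolds on the Seifert side, with Mostow rigidity and Waldhausen's theorem to close the argument --- is essentially the strategy Rong actually uses. So the outline is the right one.

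Two steps are genuinely glossed, however. First, your bound on the number of JSJ tori of the targets $Y_i$ does not follow from Kneser--Haken finiteness as stated: that theorem bounds systems of disjoint, pairwise non-parallel incompressible surfaces \emph{in the fixed source} $Y_1$, whereas you need to transport the JSJ tori of $Y_i$ back to $Y_1$. This requires first homotoping the composite degree-one map $Y_1 \to Y_i$ so that the preimage of the JSJ system of $Y_i$ is a system of incompressible tori in $Y_1$, and then checking that distinct JSJ tori of $Y_i$ contribute non-parallel preimage components; putting degree-one maps into such a ``standard form'' relative to the geometric decomposition is itself a nontrivial theorem (due to Rong and Wang), and it is also what you need before you can say that $f_i$ ``restricts on each hyperbolic piece'' to a map of hyperbolic pieces. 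Second, the strict monotonicity of the Seifert-fibered complexity, which you correctly identify as the main obstacle, is left unverified; this is precisely where the bulk of Rong's paper is spent (a careful analysis of degree-one maps between Seifert fibered spaces with boundary, including ruling out non-trivial degree-one self-dominations of a piece), and without it the argument does not close. As a sketch of the correct route your proposal is fine, but as a proof it is incomplete at exactly these two points.
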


We will also use the following lemma.

\begin{lemma} \label{prop:toroidal-irreducible}
Let $M_1$ and $M_2$ be compact 3-manifolds with incompressible torus boundary, and form
\[ Y = M_1 \cup_{T^2} M_2 \]
by gluing them according to some diffeomorphism $\partial M_1 \xrightarrow{\cong} \partial M_2$.  If $Y$ is $\SL(2,\C)$-reducible, and if $H_1(Y;\Z) \cong \Z/n\Z$ where $n \geq 1$ does not have the form $n=2e$ for some odd $e \geq 3$, then each of $M_1$, $M_2$, and $Y$ are irreducible.
\end{lemma}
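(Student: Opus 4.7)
My plan is to prove that $Y$ itself is irreducible, invoking the standard fact that $Y$ is reducible if and only if one of $M_1, M_2$ is. For the easy direction, I would observe that an essential sphere $S \subset M_i$ remains essential in $Y$: any ball $B \subset Y$ bounded by $S$ must be disjoint from $T$ (else $T$ would lie in a ball and thus be compressible, contradicting incompressibility), so $B \subset M_i$, contradicting $S$ being essential there. For the other direction, a standard innermost-disk argument using the incompressibility of $T$ isotopes any essential sphere in $Y$ to lie inside some $M_i$.

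Assuming that $Y$ is reducible, I would argue as follows. Since $H_1(Y)$ is finite, the prime decomposition of $Y$ has no $S^1\times S^2$ summands, so I can write $Y = N \mathbin{\#} Y'$ for closed 3-manifolds $N, Y'$, neither equal to $S^3$. By van Kampen's theorem, $\pi_1(Y) \cong \pi_1(N) * \pi_1(Y')$, and any irreducible representation of either factor extends by the trivial representation to an irreducible representation of $\pi_1(Y)$; since $Y$ is $\SL(2,\C)$-reducible, both $N$ and $Y'$ must be $\SL(2,\C)$-reducible as well. Abelianizing gives $H_1(Y) = H_1(N) \oplus H_1(Y') \cong \Z/n\Z$, forcing $H_1(N) \cong \Z/a\Z$ and $H_1(Y') \cong \Z/b\Z$ with $ab = n$, $\gcd(a,b) = 1$, and $a, b \geq 1$. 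If $a = 1$ then $N$ is an $\SL(2,\C)$-reducible integer homology sphere, so $N \cong S^3$ by Theorem~\ref{thm:zentner}, contradicting our choice of $N$; hence $a \geq 2$, and symmetrically $b \geq 2$.

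The main step, and the principal obstacle, is to rule out the case $a, b \geq 3$. For any integer $m \geq 3$, the assignment $1 \mapsto \mathrm{diag}(\zeta_m, \zeta_m^{-1})$ with $\zeta_m = e^{2\pi i/m}$ defines a non-central homomorphism $\Z/m\Z \to \SL(2,\C)$, because $\zeta_m \neq \pm 1$. I would compose these with the abelianizations of $\pi_1(N)$ and $\pi_1(Y')$ to obtain representations $\rho_N \colon \pi_1(N) \to \SL(2,\C)$ and $\rho_{Y'} \colon \pi_1(Y') \to \SL(2,\C)$ whose images lie in the diagonal maximal torus and contain non-central elements; each image then has exactly two invariant lines in $\C^2$, namely the coordinate axes. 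I would then conjugate $\rho_{Y'}$ by a generic $P \in \SL(2,\C)$ to move its two invariant lines off those of $\rho_N$ in $\CP^1$, each coincidence being a codimension-one Zariski-closed condition on $P$. The combined representation $\rho \colon \pi_1(Y) \to \SL(2,\C)$ defined by $\rho_N$ on one free-product factor and $P\rho_{Y'}P^{-1}$ on the other would then have no common invariant line and hence be irreducible, contradicting the $\SL(2,\C)$-reducibility of $Y$.

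The only remaining possibility is $a = 2$ (up to swapping the roles of $N$ and $Y'$), in which case Theorem~\ref{thm:gsz-reps2} identifies $N \cong \RP^3$. Combining $b \geq 2$ with $\gcd(2,b) = 1$ forces $b$ to be odd and at least $3$, so $n = 2b$ is twice an odd integer $\geq 3$, which is exactly the case excluded by hypothesis. This completes the contradiction, so $Y$ must be irreducible, and then by the equivalence of the first paragraph so are $M_1$ and $M_2$.
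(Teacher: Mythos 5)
Your proof is correct and follows essentially the same route as the paper: reduce to a connected sum $Y = N \# Y'$ of two $\SL(2,\C)$-reducible pieces, use Theorem~\ref{thm:zentner} to rule out trivial homology, build an irreducible representation of the free product unless one factor has $2$-torsion first homology, and observe that the remaining case forces $|H_1(Y)| = 2e$ with $e \geq 3$ odd. The only difference is that you reprove the free-product representation construction (the generic-conjugation/eigenline argument) explicitly, whereas the paper cites it as \cite[Theorem~1.5]{gsz}.
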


\begin{proof}
If $Y$ were reducible then it would be a connected sum, say $Y \cong Z \# Z'$, since it does not have the same homology as $S^1\times S^2$.  Both $Z$ and $Z'$ must be $\SL(2,\C)$-reducible if $Z\#Z'$ is, so neither one is a homology 3-sphere by Theorem~\ref{thm:zentner}.  Now if neither $H_1(Z)$ nor $H_1(Z')$ were $2$-torsion, then we could find an irreducible representation
\[ \pi_1(Z \# Z') \cong \pi_1(Z) \ast \pi_1(Z') \twoheadrightarrow H_1(Z) \ast H_1(Z') \to \SL(2,\C), \]
exactly as in \cite[Theorem~1.5]{gsz}.  Thus without loss of generality $H_1(Z)$ is $2$-torsion, and
\[ H_1(Y) \cong H_1(Z) \# H_1(Z') \cong (\Z/2\Z)^{\oplus r} \oplus H_1(Z') \]
for some $r \geq 1$.  Since $H_1(Y)$ is cyclic we conclude that $r=1$ and that $H_1(Z')$ is cyclic of odd order $e > 1$.  But then $|H_1(Y)| = 2e$, and we have assumed that this is not the case, so $Y$ must be irreducible after all.

Now that we know $Y$ to be irreducible, we suppose that one of the $M_i$ is reducible, say $M_1$ without loss of generality.  Then we can write $M_1 \cong W \# M'_1$, where $W \not\cong S^3$ is closed and $\partial M'_1 \cong T^2$ is incompressible.  In this case $Y \cong W \# (M'_1 \cup_{T^2} M_2)$ is a nontrivial connected sum, because $M'_1 \cup_{T^2} M_2$ is a closed toroidal manifold and hence not $S^3$.  This contradicts the irreducibility of $Y$, so $M_1$ must have been irreducible after all.
\end{proof}

\begin{proposition} \label{prop:toroidal-simple}
Suppose that there exists a toroidal, $\SL(2,\C)$-reducible 3-manifold with $H_1(Y;\Z) \cong \Z/p^e\Z$, where $p^e$ is a prime power.  Then there is such an irreducible 3-manifold $Y$ with the following additional properties: if we write
\[ Y = M_1 \cup_{T^2} M_2 \]
as in Lemma~\ref{lem:toroidal-torsion}, and if $\lambda_i$ is the rational longitude of $M_i$, then
\begin{enumerate}
\item both $M_1$ and $M_2$ are irreducible and boundary-incompressible,
\item $H_1(M_1)\cong \Z$,
\item $M_2(\lambda_1)$ is a lens space of order $p^e$,
\item and either
\begin{enumerate}
\item $\lambda_2$ is nullhomologous, $M_1(\lambda_2)$ is a lens space of order $p^e$, $H_1(M_2) \cong \Z$, and $\Delta(\lambda_1,\lambda_2) = p^e$; \label{i:zhs3}
\item or $\lambda_2$ has order $p^k$ for some $k$ with $1 \leq k < e$, and $\Delta(\lambda_1,\lambda_2)$ divides $p^{e-k}$.
\label{i:lambda-order-pk}
\end{enumerate}
\end{enumerate}
\end{proposition}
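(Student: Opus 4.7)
The plan is to select a minimal counterexample in the class $\cC$ of closed, orientable, toroidal, $\SL(2,\C)$-reducible 3-manifolds with $H_1 \cong \Z/p^e\Z$, and then simplify its two sides using degree-1 pinching maps. By hypothesis $\cC \neq \emptyset$, and Theorem~\ref{thm:rong} guarantees that $\cC$ admits a minimal element $Y$ under the partial order of degree-1 maps: every degree-1 map $Y \to Y'$ with $Y' \in \cC$ is a homotopy equivalence. Applying Lemma~\ref{lem:toroidal-torsion} we decompose $Y = M_1 \cup_{T^2} M_2$ with $H_1(M_1) \cong \Z$ and $H_1(M_2) \cong \Z \oplus \Z/p^f\Z$ for some $0 \le f \le e$. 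Since $p^e$ is a prime power and hence never of the form $2e'$ with odd $e'\ge 3$, Lemma~\ref{prop:toroidal-irreducible} gives that $Y$, $M_1$, $M_2$ are all irreducible, and the incompressibility of $T^2 \subset Y$ implies each $M_j$ has incompressible boundary.

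Next I would show that $M_2(\lambda_1)$ is a lens space of order $p^e$. Because $H_1(M_1) \cong \Z$ is torsion-free, $\lambda_1$ is nullhomologous in $M_1$, and \cite[Proposition~4.2]{gsz} furnishes a degree-1 pinching map $M_1 \to S^1 \times D^2$ sending $\lambda_1$ to the meridian; extending by the identity on $M_2$ yields a degree-1 map $g\colon Y \to M_2(\lambda_1)$, so $M_2(\lambda_1)$ is $\SL(2,\C)$-reducible and $H_1(M_2(\lambda_1))$ is a cyclic quotient of $\Z/p^e\Z$. A Lefschetz-duality computation using $H^2(M_1) = \operatorname{Ext}(H_1(M_1),\Z) = 0$ shows that $H_1(\partial M_1) \to H_1(M_1)$ is surjective, and then Mayer--Vietoris identifies $|H_1(M_2(\lambda_1))| = |H_1(Y)| = p^e$, so $H_1(M_2(\lambda_1)) \cong \Z/p^e\Z$. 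If $M_2(\lambda_1)$ were toroidal it would lie in $\cC$, and minimality would force $g$ to be a homotopy equivalence; but $g$ restricts to the identity on $T^2 = \partial M_2$, so the incompressible torus $T^2 \subset Y$ would map to a torus in $M_2(\lambda_1)$ bounding the meridian disk of the $S^1 \times D^2$ factor, hence compressible there, contradicting the injectivity of $\pi_1(T^2) \hookrightarrow \pi_1(Y) \xrightarrow{\cong} \pi_1(M_2(\lambda_1))$. Hence $M_2(\lambda_1)$ is not toroidal, and by Lemma~\ref{lem:toroidal} it must be a lens space of order $p^e$.

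Finally, for the case distinction on $\lambda_2$, write $\lambda_2 = a\mu_1 + b\lambda_1$ in a basis $(\mu_1, \lambda_1)$ of $H_1(T^2)$ with $\gcd(a,b) = 1$, so that $\Delta(\lambda_1, \lambda_2) = |a|$. A direct Mayer--Vietoris presentation of $H_1(Y)$ has determinant $\pm asp^f$, where $s$ is an auxiliary integer encoding the homological images of $\mu_1, \lambda_1$ in $H_1(M_2)$; so $|as|p^f = p^e$. In case (a), where $f = 0$, the longitude $\lambda_2$ is nullhomologous in $M_2$, and the symmetric pinching argument applied to $M_2$ shows that $M_1(\lambda_2)$ is likewise a lens space of order $p^e$, with $|a| = p^e$ yielding $\Delta = p^e$. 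In case (b), where $f \ge 1$ and $\lambda_2$ has torsion order $p^k$ in $H_1(M_2)$ with $k \ge 1$, necessarily $k \le f$, and the same computation gives $\Delta = |a|$ dividing $p^{e-f}$ and hence $p^{e-k}$. The main obstacle is excluding the two degenerate subcases not covered by (a) or (b): namely $\lambda_2$ nullhomologous with $f \ge 1$, and the extremal $k = e$ (which forces $\Delta = 1$). To handle these requires additional degree-1 simplifications tailored to the torsion structure of $H_1(M_2)$, in the spirit of the pinching onto the twisted $I$-bundle over the Klein bottle of \cite[Proposition~4.5]{gsz}, so as to either re-enter case (a) or force $1 \le k < e$ via the minimality of $Y$.
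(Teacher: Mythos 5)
Your overall architecture --- extracting a minimal element of the class via Rong's theorem, pinching $M_1$ to a solid torus to show $M_2(\lambda_1)$ is a lens space of order $p^e$, and running the Mayer--Vietoris determinant computation to relate $\Delta(\lambda_1,\lambda_2)$, $p^f$, and $p^e$ --- matches the paper's proof; the paper phrases minimality as an infinite-descent contradiction with Rong's theorem rather than choosing a minimal element up front, but that is a cosmetic difference.

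The genuine gap is your final paragraph. You have correctly isolated the two subcases that the dichotomy (a)/(b) excludes, but you do not prove anything about them, and the tool you propose (further degree-1 simplifications ``in the spirit of'' the Klein-bottle pinching) is the wrong one for both. For the subcase where $\lambda_2$ is nullhomologous and $\Delta(\lambda_1,\lambda_2)=1$ (equivalently $f=e$), both pieces are already as simple as pinching can make them: $M_1(\lambda_2)$ is a homology sphere that must be $S^3$ by Zentner's theorem, and $M_2(\lambda_1)$ is a lens space, so $Y$ is a splice of a knot exterior in $S^3$ with a knot exterior in a lens space and there is nothing left to pinch. The paper instead invokes \cite[Proposition~9.8]{gsz}, a substantive $\SU(2)$ gluing theorem that produces an irreducible representation of $\pi_1(Y)$ directly and thereby contradicts $\SL(2,\C)$-reducibility; this is the key nonformal input of the whole proposition and is absent from your proposal. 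For the subcase $k=e$, the Klein-bottle pinching of \cite[Proposition~4.5]{gsz} applies only to rational longitudes of order $2$ and plays no role here; the paper rules out $k=e$ by an elementary ``half lives, half dies'' argument: $k=e$ forces $f=e$ and $|ac|=1$, so $\mu_2$ and $\lambda_2$ would span a $2$-dimensional subspace of $H_1(M_2;\Z/p\Z)$, contradicting the fact that the peripheral map $H_1(\partial M_2;\F)\to H_1(M_2;\F)$ has rank $1$ over every field. Without these two steps the case analysis does not close and the proposition is not proved.
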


\begin{proof}
The irreducibility of $Y$ and of the $M_i$ is Lemma~\ref{prop:toroidal-irreducible}, so we will not discuss it further.

Let $Y_0$ be a toroidal, $\SL(2,\C)$-reducible 3-manifold with $H_1(Y_0;\Z) \cong \Z/p^e\Z$.  Supposing that the claimed $Y$ does not exist, we will inductively build an infinite sequence of 3-manifolds and degree-1 maps between them, of the form
\[ Y_0 \xrightarrow{f_0} Y_1 \xrightarrow{f_1} Y_2 \xrightarrow{f_2} \dots, \]
such that the $f_i$ are not homotopy equivalences, and this will contradict Theorem~\ref{thm:rong}.

Given $Y_i$, which is toroidal with $H_1(Y_i;\Z) \cong \Z/p^e\Z$, we use Lemma~\ref{lem:toroidal-torsion} to write
\[ Y_i  \cong M_1^i \cup_{T^2} M_2^i \]
with $H_1(M_1^i) \cong \Z$. Letting $\lambda_j^i$ be the rational longitude of $M_j^i$ for $j=1,2$, it follows that $\lambda_1^i$ has order 1, so there is a degree-1 map that pinches $M_1$ onto a solid torus while sending $\lambda_1^i$ to a longitude of that solid torus.  (This is a classically known construction, but see \cite[Proposition~4.2]{gsz} for details.)  This map extends to a degree-1 map
\[ p_i: Y_i \to M_2^i(\lambda^i_1), \]
which is not a homotopy equivalence because the incompressibility of the separating torus $T^2$ guarantees that $\lambda_1^i$ is a nontrivial element of the kernel of $(p_1)_*: \pi_1(Y_i) \to \pi_1(Y_{i+1})$.  Then $M_2^i(\lambda^i_1)$ satisfies $H_1(M^i_2(\lambda^i_1)) \cong H_1(Y_i) \cong \Z/p^e\Z$, since we have constructed it from $Y_i$ by replacing the homology solid torus $M_1^i$ with an actual solid torus in a way that preserves longitudes.  If it is not a lens space then Lemma~\ref{lem:toroidal} says that it must be toroidal, so we let $Y_{i+1} = M^i_2(\lambda^i_1)$ and $f_i = p_i: Y_i \to Y_{i+1}$, and we continue to the next iteration. 

\vspace{1em}
If we have reached this point then $M_2^i(\lambda^i_1)$ must be a lens space of order $p^e$, and we now consider the rational longitude $\lambda_2^i$ of $M_2^i$.

Supposing for now that $\lambda_2^i$ has order $1$, then Lemma~\ref{lem:toroidal-torsion} says that $H_1(M_2^i) \cong \Z \oplus \Z/p^f\Z$ for some $f \leq e$, with the peripheral subgroup $H_1(\partial M_2^i)$ generating the $\Z$ summand since $M_2^i$ is the complement of a nullhomologous knot.  If we choose peripheral elements $\mu^i_1$ and $\mu^i_2$ dual to the longitudes $\lambda^i_1$ and $\lambda^i_2$, then the $\mu^i_j$ generate the $\Z$ summands of each $H_1(M_j^i)$.  Dropping the ``$i$'' superscripts for now and writing the gluing map $\partial M_1^i \xrightarrow{\cong} \partial M_2^i$ in the form
\begin{align*}
\mu_1 &\sim \mu_2^a \lambda_2^b \\
\lambda_1 &\sim \mu_2^c \lambda_2^d
\end{align*}
with $ad-bc=\pm1$, we then compute that
\[ \Delta(\lambda_1, \lambda_2) = \Delta(\mu_2^c\lambda_2^d, \lambda_2) = |c|, \]
that $\lambda_2^{\pm1} \sim \mu_1^{-c}\lambda_1^a$, and that
\[ \Z/p^e\Z \cong H_1(Y_i) \cong \frac{\Z\oplus\Z}{\substack{(1,0)\sim(0,a)\\(0,0)\sim(0,c)}} \oplus \Z/p^f\Z \cong \Z/c\Z \oplus \Z/p^f\Z, \]
so $(\Delta(\lambda_1,\lambda_2), p^f) = (|c|, p^f)$ must be either $(1,p^e)$ or $(p^e,1)$.

Now since $\lambda_2$ has order $1$, there is again a degree-1 map pinching $M_2^i$ to a solid torus and sending $\lambda_2$ to a longitude of the solid torus; again this map
\[ q_i: Y_i \to M_1^i(\lambda_2) \]
cannot be a homotopy equivalence.  Then $M^i_1(\lambda_2)$ is $\SL(2,\C)$-reducible, with homology
\[ H_1(M^i_1(\lambda_2)) \cong \frac{H_1(M^i_1)}{\lambda_2} \cong \frac{H_1(M^i_1)}{\mu_1^{-c}\lambda_1^a} \cong \Z/|c|\Z, \]
so we have two cases depending on the value of $|c| = \Delta(\lambda_1,\lambda_2)$:
\begin{enumerate}
\item if $|c|=1$ then $H_1(M_1^i(\lambda_2)) = 0$, so $M_1^i(\lambda_2)$ must be $S^3$ \cite{zentner}; \label{i:decomp-case1}
\item while if $|c|=p^e$ then $H_1(M_1^i(\lambda_2)) \cong \Z/p^e\Z$. \label{i:decomp-case2}
\end{enumerate}
We use \cite{gsz} to rule out the first case as follows: let $Z_1 = M_1^i(\lambda_2) \cong S^3$ and $Z_2 = M_2^i(\lambda_1)$, which is a lens space; the cores $K_1 \subset Z_1$ and $K_2 \subset Z_2$ of these fillings are nullhomologous, since their meridians $\mu_1=\lambda_2$ and $\mu_2=\lambda_1$ are at distance one from their longitudes $\lambda_1$ and $\lambda_2$, and their exteriors $M_1 \cong Z_1 \setminus N(K_1)$ and $M_2 \cong Z_2 \setminus N(K_2)$ are irreducible and boundary-incompressible.  We form $Y_i$ by splicing the exteriors of $K_1$ and $K_2$, gluing the meridian of one to the longitude of the other and vice versa, and so \cite[Proposition~9.8]{gsz} gives us an irreducible representation $\rho: \pi_1(Y_i) \to \SU(2)$, contradicting the assumption that $Y_i$ was $\SL(2,\C)$-reducible.

We must therefore be in case~\eqref{i:decomp-case2} above, meaning that $(|c|,p^f) = (p^e,1)$ and that
\[ H_1(M_2^i) \cong \Z \oplus \Z/p^f\Z \cong \Z. \]
If $M_1^i(\lambda_2)$ is a lens space of order $p^e$, then $Y_i$ is the desired $Y$ and we are done.  Otherwise $\lambda_2$ has order $1$ and $H_1(M_1^i(\lambda_2)) \cong \Z/p^e\Z$, but $M_1^i(\lambda_2)$ is not a lens space, and then it must be toroidal by Lemma~\ref{lem:toroidal}.  We let $Y_{i+1} = M_1^i(\lambda_2)$ and $f_i = q_i: Y_i \to Y_{i+1}$, and we continue to the next iteration.

\vspace{1em}
The only case left to consider is where $M_2^i(\lambda^i_1)$ is a lens space of order $p^e$, but $\lambda_2^i$ is not nullhomologous in $M_2^i$; since it is a torsion element of $H_2(M_2^i) \cong \Z \oplus \Z/p^f\Z$ where $f \leq e$, it must have order $p^k$, where $1 \leq k \leq f \leq e$.

Identifying $H_2(M_2^i) \cong \Z \oplus \Z/p^f\Z$, we can choose a generator of the torsion summand so that the peripheral elements of $M_2^i$ have the form
\begin{align*}
\mu_2 &= (a,b), & \lambda_2 &= (0,p^{f-k}).
\end{align*}
Then as above we have a degree-1 map $Y_i \to M_2^i(\lambda_1)$, which preserves $H_1$ because it replaces the homology solid torus $M_1^i$ with an actual solid torus in a longitude-preserving way.  If $\lambda_1 \sim \mu_2^c \lambda_2^d$ for some integers $c$ and $d$, then
\[ \Z/p^e\Z \cong H_1(M_2^i(\lambda_1)) \cong \frac{\Z \oplus \Z/p^f\Z}{ c(a,b) + d(0,p^{f-k}) = 0}
\cong \coker \begin{pmatrix} ca & cb+dp^{f-k} \\ 0 & p^f \end{pmatrix}, \]
so by comparing orders we have
\[ |ac| p^f = p^e, \]
or equivalently $|ac| = p^{e-f}$.  In particular, if $k = e$ then these are each equal to $f$, which is sandwiched between $k$ and $e$, so we have $|ac| = 1$ and thus
\begin{align*}
\mu_2 &= (\pm1, b), &
\lambda_2 &= (0,1)
\end{align*}
as elements of $\Z \oplus \Z/p^f\Z$.  But then these elements span a 2-dimensional subspace of $H_1(M_2^i; \Z/p\Z) \cong \Z/p\Z \oplus \Z/p\Z$, contradicting the ``half lives half dies'' theorem which says that the inclusion-induced map
\[ H_1(\partial M_2^i; \F) \to H_1(M_2^i; \F) \]
has rank 1 over any field $\F$.  We must therefore have a strict inequality $k < e$, and moreover
\[ \Delta(\lambda_1,\lambda_2) = \Delta(\mu_2^c\lambda_2^d, \lambda_2) = |c| \]
divides $p^{e-f}$ and hence $p^{e-k}$ as claimed.
\end{proof}

\section{Toroidal manifolds with essential rational longitudes} \label{sec:essential-case}

Having proved Proposition~\ref{prop:toroidal-simple} for manifolds with homology $\Z/p^e\Z$ in general, we now specialize to $p^e = 4$ in order to rule out the last case of that proposition.  We accomplish this by simplifying our toroidal 3-manifold via \cite[Proposition~1.9]{gsz}, asserting that a compact 3-manifold with torus boundary and a rational longitude of order $2$ admits a degree-1 map onto the twisted $I$-bundle over the Klein bottle.  We thus begin by describing the $\SU(2)$ representations of the latter manifold in the following lemma, which is a corrected version of the second half of \cite[Proposition~4.4]{gsz}.

\begin{lemma} \label{lem:TIKB-reps}
Let $N$ be the twisted $I$-bundle over the Klein bottle, with peripheral Seifert fiber $\sigma$ and rational longitude $\lambda$.  Then every representation $\rho: \pi_1(N) \to \SU(2)$ is conjugate to one satisfying
\[ (\rho(\sigma),\rho(\lambda)) = (e^{i\alpha}, \pm1) \quad\text{or}\quad (\rho(\sigma),\rho(\lambda)) = (-1,e^{i\beta}) \]
for arbitrary $\alpha,\beta \in \R/2\pi\Z$, and all such pairs are realized by some $\rho$.  The image of $\rho$ is non-abelian if and only if $\rho(\lambda) \neq \pm1$.
\end{lemma}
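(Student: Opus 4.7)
The plan is to work from a concrete presentation of $\pi_1(N)$, identify $\sigma$ and $\lambda$ inside it, and then run a short case analysis on the common square $X^2 = Y^2$, where $X, Y$ are the images of the generators. Using the Seifert fibration of $N$ over the disk orbifold with two multiplicity-$2$ cone points, I would take $\pi_1(N) \cong \langle x, y \mid x^2 = y^2\rangle$, in which the regular fiber is the central element $\sigma = x^2 = y^2$. A short computation in $H_1(N;\Z) \cong \Z \oplus \Z/2\Z$ shows that $xy^{-1}$ is a primitive peripheral class whose homology image generates the $2$-torsion summand, and so it equals the rational longitude $\lambda$ up to sign; concretely, if $\tau = xy$ is chosen as a complementary peripheral generator of $\pi_1(\partial N)$ together with $\sigma$, then $\lambda = \tau \sigma^{-1}$.

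Next, set $X = \rho(x)$ and $Y = \rho(y)$ and split on the value of $X^2 = Y^2$. If $X^2 \notin \{\pm I\}$, then its centralizer in $\SU(2)$ is the unique maximal torus through $X^2$, and this torus must contain both $X$ and $Y$, so $\rho$ is abelian; the same conclusion holds when $X^2 = Y^2 = I$, in which case $X, Y \in \{\pm I\}$. In either subcase, simultaneously diagonalizing $X$ and $Y$ and using $X^2 = Y^2$ shows that $\rho(\lambda) = XY^{-1}$ must be central, so $(\rho(\sigma), \rho(\lambda)) = (e^{i\alpha}, \pm I)$ for arbitrary $\alpha$, and every such pair is realized by an appropriate choice of $X$ and $Y$.

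The remaining case is $X^2 = Y^2 = -I$, where $X$ and $Y$ are traceless unit quaternions, say $X = u_1 \mathbf{i} + u_2 \mathbf{j} + u_3 \mathbf{k}$ and $Y = v_1 \mathbf{i} + v_2 \mathbf{j} + v_3 \mathbf{k}$ with $u, v \in S^2 \subset \R^3$. Then $\rho(\sigma) = -I$, and the standard quaternion identity $XY = -\langle u, v\rangle + u \times v$ combined with $Y^{-1} = -Y$ gives $\rho(\lambda) = -XY = \langle u, v\rangle - u \times v$, whose trace $2\langle u, v\rangle$ sweeps through $[-2, 2]$ as $u, v$ vary. Conjugating $\rho(\lambda)$ into the maximal torus therefore yields $(\rho(\sigma), \rho(\lambda)) = (-I, e^{i\beta})$ for arbitrary $\beta$. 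Moreover $\rho(\lambda) = \pm I$ in this case iff $\langle u, v\rangle = \pm 1$ iff $u = \pm v$ iff $X = \pm Y$, which is precisely when $\rho$ has abelian image; combined with the previous paragraph this yields the stated non-abelianity criterion.

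The only point that requires real care is the first step --- correctly identifying which primitive peripheral element plays the role of $\lambda$ in the chosen presentation, since it is exactly this bookkeeping which gets corrected relative to the original statement of \cite[Proposition~4.4]{gsz}. Once $\sigma$ and $\lambda$ are pinned down, everything else is a routine case split in $\SU(2)$.
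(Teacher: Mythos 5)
Your proof is correct, and it reaches the same conclusion by a mildly different route.  The paper works with the presentation $\langle a,b \mid aba^{-1}=b^{-1}\rangle$, with $\sigma=a^2$ and $\lambda=b$, and splits into cases according to whether $\rho$ has abelian image; you instead use the isomorphic presentation $\langle x,y\mid x^2=y^2\rangle$ and split on the value of the central element $\rho(\sigma)=X^2=Y^2$.  The two organizations are equivalent: your first two cases (where $X^2$ is non-central or equals $I$) recover exactly the paper's abelian case, and your quaternion computation when $X^2=-I$ is a coordinate-free version of the paper's normalization $\rho(a)=j$, $\rho(b)=e^{i\beta}$.  What your version buys is that the surjectivity onto all classes $(-1,e^{i\beta})$ and the non-abelianity criterion both drop out of the single identity $\tr(-XY)=2\langle u,v\rangle$; what it costs is the change-of-presentation bookkeeping, which you rightly identify as the only delicate step.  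Your identification of $\lambda$ is correct: with $x=a$ and $y=ba$ one checks $x^2=y^2$ and $xy^{-1}=b^{-1}=\lambda^{-1}$, which is $\lambda$ up to the sign ambiguity inherent in the rational longitude.  In a complete write-up you would want to record such an isomorphism explicitly, or at least verify that $\sigma=x^2$ and $\tau=xy$ really do generate the peripheral subgroup (the image of $\pi_1(\partial N)$ is the orientation subgroup of the Klein bottle group), so that the homological characterization you invoke genuinely singles out $xy^{-1}$ among primitive peripheral classes; but this is routine and the argument as sketched is sound.
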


\begin{proof}
Following \cite[Proposition~4.4]{gsz}, we write 
\[ \pi_1(N) \cong \langle a,b \mid aba^{-1} = b^{-1}\rangle, \]
with $\sigma = a^2$ and $\lambda = b$.  Up to conjugacy any $\rho : \pi_1(N) \to \SU(2)$ satisfies $\rho(\sigma) = e^{i\alpha}$ and $\rho(\lambda) = e^{i\beta}$ for some $\alpha, \beta \in \R / 2\pi\Z$, so we will assume throughout the proof that all representations $\rho$ have this form.

We first suppose that $\rho : \pi_1(N) \to \SU(2)$ has abelian image, and we claim that it must then satisfy $\rho(\lambda) = \pm1$.  Indeed, if $\rho(\lambda) = e^{i\beta}$ were different from $\pm1$, then $\rho(a)$ would have to lie in the unique $U(1)$ subgroup through $\rho(\lambda)$, so in fact we could write $\rho(a) = e^{it}$ for some $t$.  But then the relation $\rho(aba^{-1}) = \rho(b^{-1})$ would become $e^{i\beta} = (e^{i\beta})^{-1}$, so $\rho(\lambda) = e^{i\beta}$ must have been $\pm1$ after all, a contradiction.  At the same time, given any $\alpha$ we can define a representation $\rho: \pi_1(Y) \to \SU(2)$ with abelian image by
\begin{align*}
\rho(a) &= e^{i\alpha/2}, &
\rho(b) &= \pm1,
\end{align*}
and then we have $\rho(\sigma) = \rho(a^2) = e^{i\alpha}$ and $\rho(\lambda) = \rho(b) = \pm 1$, so all pairs $(\rho(\sigma),\rho(\lambda)) = (e^{i\alpha},\pm1)$ are realized by abelian representations.

Now suppose that $\rho : \pi_1(N) \to \SU(2)$ has non-abelian image.  Since the element $\sigma = a^2$ is central in $\pi_1(N)$, its image $\rho(\sigma)$ must commute with the entire image of $\rho$, so then $\rho(\sigma) = \pm1$.  If we had $\rho(\sigma) = +1$ then its square root $\rho(a)$ would have to be $\pm1$, but this is not possible if $\rho$ has non-abelian image, so in fact $\rho(\sigma) = -1$.  Up to conjugacy this means that $\rho(a) = j$, and then the relation
\[ j \rho(b) j^{-1} = \rho(a) \rho(b) \rho(a)^{-1} = \rho(b)^{-1} \]
implies that the $j$-component of $\rho(b)$ is zero, so we can conjugate again by something of the form $\cos(\theta) + \sin(\theta) j$ to maintain the relation $\rho(a) = j$ while recovering our preferred form $\rho(b) = e^{i\beta}$.  Any such choice of $\beta$ gives rise to a representation $\rho$ with $\rho(\sigma) = -1$ and $\rho(\lambda) = \rho(b) = e^{i\beta}$, and it has non-abelian image precisely when $j$ does not commute with $e^{i\beta}$, i.e., when $e^{i\beta} \neq \pm1$.
\end{proof}

\begin{remark}
The abelian representations with $\rho(\sigma) \neq 1$ in Lemma~\ref{lem:TIKB-reps} were mistakenly omitted from \cite[Proposition~4.4]{gsz}, though this omission does not affect its application in \cite{gsz}.  In the proof of that proposition we had claimed that if $\rho(a)=e^{js}$ for some $s\not\in\pi\Z$, then ``the relation $\rho(aba^{-1}) = \rho(b^{-1})$ implies that $\rho(a)=\pm j$ and that $\rho(b)$ has zero $j$-component,'' when in fact another possibility is that $s$ is arbitrary and $\rho(b) = \pm1$.
\end{remark}

We now begin to address case~\eqref{i:lambda-order-pk} of Proposition~\ref{prop:toroidal-simple} for manifolds $Y$ with $H_1(Y) \cong \Z/4\Z$.  The following is a special case, in which we take the simplest possible choice of $M_2$ whose rational longitude has order $2$ and consider a very specific gluing map $\partial M_1 \xrightarrow{\cong} \partial M_2$.

\begin{proposition} \label{prop:glue-to-TIKB}
Let $N$ denote the twisted $I$-bundle over the Klein bottle, with peripheral Seifert fiber $\mu_2$ and rational longitude $\lambda_2$.  Form a closed 3-manifold
\[ Y = M_1 \cup_{T^2} N \]
where
\begin{enumerate}
\item $H_1(Y; \Z)$ is finite cyclic and $H_1(M_1; \Z) \cong \Z$;
\item $M_1$ is irreducible and has incompressible torus boundary, with longitude $\lambda_1$;
\item and the gluing map identifies $\lambda_1 \sim \mu_2^{\pm1} \lambda_2^{\vphantom{\pm1}}$.
\end{enumerate}
Then there is a representation
\[ \rho : \pi_1(Y) \to \SU(2) \]
with non-abelian image.
\end{proposition}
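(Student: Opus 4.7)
The plan is to construct $\rho$ by gluing representations $\rho_1\colon \pi_1(M_1)\to\SU(2)$ and $\rho_N\colon\pi_1(N)\to\SU(2)$ whose restrictions to $\pi_1(T^2)$ agree, with at least one of them having non-abelian image.  Using the freedom to replace $\mu_1$ by $\mu_1\lambda_1^k$, I would first normalize so that $\mu_1 = \lambda_2^q$ in $\pi_1(N)$ for some $q\in\{\pm1\}$, while $\lambda_1 = \mu_2^\epsilon\lambda_2$ remains as given.  By Lemma~\ref{lem:TIKB-reps}, the non-abelian part of the pillowcase image $i^*X(N)$ is the arc $\eta$ given in $(\alpha_2,\beta_2)$-coordinates by $(\pi,\beta_2)$ with $\beta_2\in(0,\pi)$; converting to $(\alpha_1,\beta_1)$-coordinates via $\alpha_1 = q\beta_2$ and $\beta_1 = \epsilon\pi+\beta_2$, one sees that $\eta$ is a properly embedded arc in the cut-open pillowcase $\cC$ running from the orbifold point $(0,\pi)$ to $(\pi,0)$, connecting the two boundary components and lying in $\beta_1\in(\pi,2\pi)$, away from the abelian locus $\beta_1 = 0$.

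I would then set $Y_1 := M_1(\mu_1)$; since $\mu_1$ generates $H_1(M_1)\cong\Z$, this is an integer homology sphere in which $M_1$ appears as the exterior of the dual core $K_1$.  If $Y_1$ is $\SU(2)$-abelian, then Theorem~\ref{thm:pillowcase-alternative} produces an essential embedded simple closed curve $\gamma_1\subset i^*X(M_1)$ in the interior of $\cC$.  Because $[\gamma_1]$ is nonzero in $H_1(\cC;\Z)\cong\Z$ while $\eta$ represents a generator of $H_1(\cC,\partial\cC;\Z)$, the intersection pairing forces $\gamma_1\cap\eta\neq\emptyset$; any intersection point lies in the interior of $\eta$, so $\beta_1\neq 0$ and the corresponding $\rho_1$ is non-abelian by Example~\ref{ex:reducible-nullhomologous}, while $\rho_N$ lies in Family~B with $\beta_2\in(0,\pi)$ and is therefore non-abelian by Lemma~\ref{lem:TIKB-reps}.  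The two glue to the required $\rho$.

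If instead $Y_1$ is not $\SU(2)$-abelian, I would pull back a non-abelian $\rho_{Y_1}\colon\pi_1(Y_1)\to\SU(2)$ along the surjection $\pi_1(M_1)\twoheadrightarrow\pi_1(Y_1)$ to a non-abelian $\rho_1$ with $\rho_1(\mu_1) = I$, and after conjugation arrange that $\rho_1(\lambda_1) = e^{i\gamma}$ for some $\gamma\in\R/2\pi\Z$.  Setting $\rho_N(a) := e^{i\epsilon\gamma/2}$ and $\rho_N(b) := I$ then defines a Family~A representation of $\pi_1(N)$ (the relation $aba^{-1}b = 1$ is automatic when $\rho_N(b) = I$), and a direct check using the normalized gluing confirms that $\rho_N$ agrees with $\rho_1$ on $\pi_1(T^2)$; the resulting glued $\rho$ is non-abelian because $\rho_1$ is.  The main obstacle will be the intersection argument in the $\SU(2)$-abelian case: one must verify both that the rechoice of $\mu_1$ leaves $Y_1$ an integer homology sphere (which holds because any class projecting to a generator of $H_1(M_1)$ kills all of first homology under Dehn filling) and that the resulting intersection point lies in the interior of $\cC$ and corresponds to non-abelian characters on both sides, so that the glued $\rho$ does not degenerate to an abelian representation.
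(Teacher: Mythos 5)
Your proposal is correct and follows essentially the same route as the paper: split into the cases where $Y_1 = M_1(\mu_1)$ is or is not $\SU(2)$-abelian, handle the latter by pulling back a non-abelian representation and gluing it to an abelian representation of $\pi_1(N)$, and handle the former by intersecting the essential curve from Theorem~\ref{thm:pillowcase-alternative} with an arc generating $H_1(\cC,\partial\cC)$. Your arc $\eta$ (the pillowcase image of the non-abelian locus of $X(N)$, rewritten in $(\alpha_1,\beta_1)$-coordinates) is exactly the paper's line $L^\epsilon_\pi$, so the intersection-pairing step coincides with the paper's argument at $\theta=\pi$.
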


\begin{proof}
We write $\lambda_1 \sim \mu_2^\epsilon \lambda_2^{\vphantom\epsilon}$ for some $\epsilon \in \{\pm1\}$ and choose $\mu_1 \subset \partial M_1$ to be the peripheral curve satisfying $\mu_1 \sim \lambda_2^\epsilon$, so that
\[ 
\left\{\begin{aligned}
\mu_1 &\sim \hphantom{\mu_2^{\epsilon}}\lambda_2^{\epsilon} \\
\lambda_1 &\sim \mu_2^{\epsilon} \lambda_2^{\vphantom{k}}
\end{aligned}\right.
\qquad\Longleftrightarrow\qquad
\left\{\begin{aligned}
\mu_2 &\sim \mu_1^{-1} \lambda_1^{\epsilon} \\
\lambda_2 &\sim \mu_1^\epsilon.
\end{aligned}\right.
\]
Then we have
\[ \Delta(\mu_1,\lambda_1) = \Delta(\lambda_2^\epsilon, \mu_2^\epsilon \lambda_2^{\vphantom{\epsilon}}) = 1, \]
so $\mu_1$ is dual to $\lambda_1$ as curves in $\partial M_1$, and in particular $\mu_1$ must generate $H_1(M_1) \cong \Z$.  This means that
\[ Y_1 = M_1(\mu_1) \]
is an integer homology sphere.  We let $K_1 \subset Y_1$ be the core of this filling, with exterior $M_1 \cong Y_1 \setminus N(K_1)$.

We first suppose that $Y_1$ is not $\SU(2)$-abelian.  Then there is a representation
\[ \rho_{Y_1}: \pi_1(Y_1) \to \SU(2) \]
with non-abelian image, and this gives rise to a non-abelian
\[ \rho_{M_1}: \pi_1(M_1) \twoheadrightarrow \frac{\pi_1(M_1)}{\llangle \mu_1\rrangle} \cong \pi_1(Y_1) \xrightarrow{\rho_{Y_1}} \SU(2) \]
satisfying $\rho_{M_1}(\mu_1) = 1$.  Now by Lemma~\ref{lem:TIKB-reps} we can take an abelian representation
\[ \rho_N: \pi_1(N) \to \SU(2) \]
satisfying $\rho_N(\lambda_2) = 1$ and $\rho_N(\mu_2) = \rho_{M_1}(\lambda_1^\epsilon)$, and since $\rho_{M_1}$ and $\rho_N$ satisfy
\begin{align*}
\rho_N(\mu_2) &= \rho_{M_1}(\mu_1^{-1}\lambda_1^\epsilon) \\
\rho_N(\lambda_2) &= \rho_{M_1}(\mu_1^\epsilon),
\end{align*}
they glue together to give a representation $\rho: \pi_1(Y) \to \SU(2)$.  The image of $\rho$ contains that of $\rho_{M_1}$, so it is non-abelian and in this case we are done.

For the remainder of the proof we can suppose that $Y_1$ is $\SU(2)$-abelian.  Since the exterior $M_1$ of $K_1 \subset Y_1$ is irreducible and boundary-incompressible, Theorem~\ref{thm:pillowcase-alternative} provides a topologically embedded curve $\gamma$ in the cut-open pillowcase image
\[ i^*X(Y_1,K_1) \subset \cC = [0,\pi]_\alpha \times (\R/2\pi\Z)_\beta \]
that is homologically nontrivial in $H_1(\cC;\Z)$, and that satisfies
\[ \gamma \subset (0,\pi) \times (\R/2\pi\Z) = \inr(\cC). \]
Now for each possible $\epsilon \in \{\pm1\}$ and each $\theta \in \R/2\pi\Z$, we define a line segment $L^\epsilon_\theta$ from the $\alpha=0$ component of $\partial \cC$ to the $\alpha=\pi$ component of $\partial \cC$ by setting
\[ L^\epsilon_\theta = \{ (\alpha, \epsilon(\alpha+\theta)) \mid 0 \leq \alpha \leq \pi \}. \]
See Figure~\ref{fig:gamma-meets-lines}.
\begin{figure}
\begin{tikzpicture}[style=thick]
\foreach \i in {0,6} {
\begin{scope}[xshift=\i cm]
  \draw plot[mark=*,mark size = 0.5pt] coordinates {(0,0)(3,0)(3,6)(0,6)} -- cycle; 
  \begin{scope}[decoration={markings,mark=at position 0.55 with {\arrow[scale=1]{>>}}}]
    \draw[postaction={decorate}] (3,6) -- (0,6);
    \draw[postaction={decorate}] (3,0) -- (0,0);
  \end{scope}
  \draw[dotted] (0,3) -- (3,3);
  \draw[thin,|-|] (0,-0.4) node[below] {\small$0$} -- node[midway,inner sep=1pt,fill=white] {$\alpha$} ++(3,0) node[below] {\small$\vphantom{0}\pi$};
  \draw[thin,|-|] (-0.3,0) node[left] {\small$0$} -- node[midway,inner sep=1pt,fill=white] {$\beta$} ++(0,6) node[left] {\small$2\pi$};
  \begin{scope}[color=red,style=very thick]
     \draw plot [smooth] coordinates { (1,0) (2,2) (0.5,4) (1,6) };
     \node [above left, inner sep=2pt] at (0.5,4) {$\gamma$};
  \end{scope}
\end{scope}
}
\begin{scope}[color=blue,style=ultra thick]
  \draw (0,0) -- node[left,pos=0.4] {$L^{+1}_0$} ++(3,3);
  \draw (0,3) -- node[right,pos=0.5] {$L^{+1}_\pi$} ++(3,3);
  \draw (6,3) -- node[below left,pos=0.4, inner sep=1pt] {$L^{-1}_\pi$} ++(3,-3);
  \draw (6,6) -- node[above right,pos=0.5, inner sep=1pt] {$L^{-1}_0$} ++(3,-3);
\end{scope}
\end{tikzpicture}
\caption{Left: the curve $\gamma$ meeting each of the paths $L^{+1}_0$ and $L^{+1}_\pi$ in the cut-open pillowcase.  Right: same, but with $L^{-1}_0$ and $L^{-1}_\pi$.}
\label{fig:gamma-meets-lines}
\end{figure}
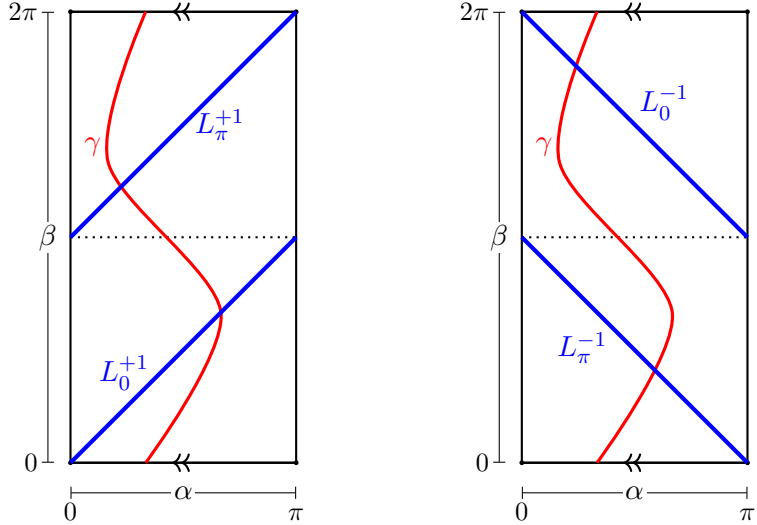
Each of the paths $L^\epsilon_\theta$ generates the relative homology $H_1(\cC, \partial \cC; \Z) \cong \Z$, so the intersection pairing
\[ H_1(\cC) \times H_1(\cC,\partial \cC) \to \Z \]
sends $([\gamma], [L^\epsilon_\theta])$ to $\pm1$, and as such the intersection
\[ \gamma \cap L^\epsilon_\theta \]
must be nonempty.

Any point $(\alpha,\beta)$ of $\gamma \cap L^\epsilon_\theta$ corresponds to a representation
\begin{align*}
\rho_{M_1}: \pi_1(M_1) &\to \SU(2) \\
\mu_1 &\mapsto e^{i\alpha} \\
\lambda_1 &\mapsto e^{i\beta},
\end{align*}
satisfying $\rho_{M_1}( \mu_1^\epsilon ) = e^{i \cdot \epsilon\alpha}$ and
\[ \rho_{M_1}( \mu_1^{-1} \lambda_1^{\epsilon} ) = e^{i(-\alpha+\epsilon\beta)} = e^{i\theta}. \]
We cannot have $\alpha=0$ or $\alpha=\pi$, because $\gamma$ does not contain any such points, and so $\rho_{M_1}(\mu_1^\epsilon) = e^{i\cdot \epsilon\alpha}$ is not equal to $\pm1$.  In other words, for each $\theta\in \R/2\pi\Z$ we have found a representation $\rho_{M_1}: \pi_1(M_1) \to \SU(2)$ satisfying
\begin{align*}
\rho_{M_1}(\mu_1^{-1}\lambda_1^{\epsilon}) &= e^{i\theta}, &
\rho_{M_1}(\mu_1^\epsilon) &= e^{i\cdot \epsilon\alpha} \neq \pm1.
\end{align*}

At this point, we specialize to $\theta = \pi$ and use Lemma~\ref{lem:TIKB-reps} to identify a representation $\rho_N : \pi_1(N) \to \SU(2)$ satisfying
\begin{align*}
\rho_N(\mu_2) &= e^{i\theta} = -1 \\
\rho_N(\lambda_2) &= \rho_{M_1}(\mu_1^\epsilon) \neq \pm1,
\end{align*}
which must necessarily be non-abelian.  Since $\mu_2 \sim \mu_1^{-1}\lambda_1^\epsilon$ and $\lambda_2 \sim \mu_1^\epsilon$, and we have arranged that $\rho_N(\mu_2) = \rho_{M_1}(\mu_1^{-1}\lambda_1^\epsilon)$ and $\rho_N(\lambda_2) = \rho_{M_1}(\mu_1^\epsilon)$, the representations $\rho_{M_1}$ and $\rho_N$ glue together to give us a representation
\[ \rho: \pi_1(Y) \to \SU(2), \]
whose image is non-abelian because it contains the non-abelian image of $\rho_N$.
\end{proof}

\begin{proposition} \label{prop:rule-out-4b}
Assume the hypotheses of Proposition~\ref{prop:toroidal-simple}.  If $p^e = 4$, then case~\eqref{i:lambda-order-pk} of that proposition does not occur.  In other words, case~\eqref{i:zhs3} applies instead: the rational longitude $\lambda_2 \subset \partial M_2$ is nullhomologous, $M_1(\lambda_2)$ is a lens space of order $p^e = 4$, $H_1(M_2) \cong \Z$, and $\Delta(\lambda_1,\lambda_2) = p^e = 4$.
\end{proposition}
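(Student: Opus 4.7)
The plan is to invoke \cite[Proposition~4.5]{gsz}, which provides a degree-$1$ pinching $q\colon M_2 \to N$ onto the twisted $I$-bundle $N$ over the Klein bottle that restricts to a homeomorphism on the boundary sending $\lambda_2$ to $\lambda_N$. Extending $q$ by the identity on $M_1$ gives a degree-$1$ map $Y \to Y' := M_1 \cup_{T^2} N$, and since degree-$1$ maps surject on $\pi_1$ (and hence pull back irreducible $\SL(2,\C)$-representations), $Y'$ inherits the $\SL(2,\C)$-reducibility of $Y$. Once we verify the hypotheses of Proposition~\ref{prop:glue-to-TIKB} for $Y'$, that result produces a non-abelian $\SU(2)$-representation of $\pi_1(Y')$, which lifts through $\pi_1(Y) \twoheadrightarrow \pi_1(Y')$ to a non-abelian $\SU(2)$-representation of $\pi_1(Y)$, giving the desired contradiction.

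To unpack the homological data in case~\eqref{i:lambda-order-pk} with $p^e=4$: here $k=1$, so $H_1(M_2) \cong \Z\oplus\Z/2^f\Z$ for some $f \in \{1,2\}$, $|ac|\cdot 2^f = 4$, and $|c| = \Delta(\lambda_1,\lambda_2) \mid 2$. The ``half lives, half dies'' principle with $\F_2$-coefficients, applied to the pair $\{\mu_2,\lambda_2\}$, forces their reductions modulo~$2$ to span a $1$-dimensional subspace of $H_1(M_2;\F_2) \cong \F_2^{\oplus 2}$. This rules out the triple $(f,|a|,|c|) = (1,1,2)$ and leaves only $(1,2,1)$ and $(2,1,1)$; in particular, $\Delta(\lambda_1,\lambda_2) = 1$ in both surviving subcases.

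It remains to verify the gluing condition $\lambda_1 \sim \mu_N^{\pm 1}\lambda_N$ of Proposition~\ref{prop:glue-to-TIKB}, the other hypotheses being inherited from Proposition~\ref{prop:toroidal-simple}. Because $|c|=1$, the image of $\lambda_1$ in $\pi_1(\partial N) \cong \Z^2$ has the form $\mu_N^{\pm 1}\lambda_N^m$ for some $m \in \Z$; computing directly with the boundary data of $q$ then pins down $m=1$. The main obstacle is this last verification: the specific boundary behavior of $q$ from \cite[Proposition~4.5]{gsz} must align with the conventions of Proposition~\ref{prop:glue-to-TIKB}, which may require composing $q$ with a self-homeomorphism of $N$ preserving the pair $(\mu_N,\lambda_N)$, or else extending Proposition~\ref{prop:glue-to-TIKB} to allow arbitrary $m$ (an extension that goes through by replacing the intersection line in the pillowcase with one of appropriate slope).
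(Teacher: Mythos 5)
Your overall strategy is the same as the paper's: pinch $M_2$ onto the twisted $I$-bundle $N$ over the Klein bottle via the rational-longitude-preserving degree-$1$ map of \cite{gsz}, transfer $\SL(2,\C)$-reducibility to $Y' = M_1 \cup_{T^2} N$, and then contradict it with Proposition~\ref{prop:glue-to-TIKB}.  Your homological digression is correct and is a legitimate alternative to part of the paper's argument: the ``half lives, half dies'' computation does force $\Delta(\lambda_1,\lambda_2)=1$, whereas the paper instead deduces $H_1(N(\lambda_1))\cong\Z/4\Z$ from the fact that every Dehn filling of $N$ has first homology of order at least $4$, which likewise pins down the $\mu_N$-exponent of $\lambda_1$ to be $\pm1$.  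Either route gives $\lambda_1 \sim \mu_N^{\pm1}\lambda_N^{m}$ for some $m\in\Z$.

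The genuine gap is the final step, which you flag but do not close: the claim that ``computing directly with the boundary data of $q$ pins down $m=1$'' cannot work.  The pinching map $q$ only controls the image of the rational longitude $\lambda_2$; it gives you no control over where a dual curve to $\lambda_2$ is sent, and in any case the exponent $m$ depends on the gluing of $Y$ and not on $q$, so no amount of inspecting $q$ determines it.  Your fallback (i) also fails, since self-homeomorphisms of $N$ act on $H_1(\partial N)$ only by signs on $\mu_N$ and $\lambda_N$ and so cannot change $|m|$.  The paper closes this gap by using the representation variety of $N$ itself rather than Proposition~\ref{prop:glue-to-TIKB}: if $m$ is even then $H_1(N(\lambda_1))\cong\Z/2\Z\oplus\Z/2\Z$ is not cyclic, and if $m$ is odd with $|m|\geq 3$ then Lemma~\ref{lem:TIKB-reps} produces a representation of $\pi_1(N)$ with $\rho(\mu_N)=-1$ and $\rho(\lambda_N)=e^{i\pi(2j+1)/m}\neq\pm1$, hence a non-abelian $\SU(2)$ representation of $N(\lambda_1)$ itself, which pulls back to $\pi_1(Y)$ along the degree-$1$ maps $Y\to Y'\to N(\lambda_1)$.  (This is what the paper means by citing the classification in \cite[Proposition~4.4]{gsz}.)  Only after this reduction to $m=\pm1$ does Proposition~\ref{prop:glue-to-TIKB} apply.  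Your fallback (ii), extending Proposition~\ref{prop:glue-to-TIKB} to arbitrary odd $m$, would need this same input anyway --- for $|m|\geq 3$ the point on the essential curve $\gamma$ where $\rho(\mu_N)=-1$ need no longer satisfy $\rho(\lambda_N)\neq\pm1$, so the extension is not the routine slope change you suggest --- and it is unnecessary once one observes that $|m|\geq3$ is already fatal for $\SL(2,\C)$-reducibility.
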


\begin{proof}
Supposing that we find ourselves in case~\eqref{i:lambda-order-pk}, the rational longitude $\lambda_2$ must have order exactly $2$, and the distance
\[ \Delta(\lambda_1, \lambda_2) \]
divides $2$.  Since $\lambda_2$ has order $2$, we apply \cite[Proposition~1.9]{gsz} to produce a degree-1 map
\[ M_2 \to N \]
that preserves rational longitudes, where $N$ is the twisted $I$-bundle over the Klein bottle.  This gives rise to a sequence of rational-longitude-preserving, degree-1 maps
\begin{equation} \label{eq:two-pinches}
Y = M_1 \cup_{T^2} M_2 \to M_1 \cup_{T^2} N \to N(\lambda_1),
\end{equation}
where we first pinch $M_2$ to $N$ and then $M_1$ to a solid torus.

It follows from \eqref{eq:two-pinches} that $M_1 \cup_{T^2} N$ and $N(\lambda_1)$ are both $\SL(2,\C)$-reducible, with first homology a quotient of $\Z/4\Z$.  In fact, every Dehn filling of $N$ has homology of order at least $4$ and so we must have
\[ H_1(N(\lambda_1);\Z) \cong \Z/4\Z, \]
which implies in turn that
\[ H_1(M_1 \cup_{T^2} N) \cong \Z/4\Z. \]
We can also read from the proof of \cite[Proposition~4.4]{gsz} that if $\mu_2$ is the Seifert fiber slope on $\partial N$, then
\[ \lambda_1 \sim \mu_2^{\pm1} \lambda_2^{\vphantom{\pm1}} \]
for some choice of sign, since otherwise $N(\lambda_1)$ cannot be $\SL(2,\C)$-reducible with homology $\Z/4\Z$.  But now Proposition~\ref{prop:glue-to-TIKB} provides us with a non-abelian representation
\[ \pi_1(M_1 \cup_{T^2} N) \to \SU(2), \]
so $M_1 \cup_{T^2} N$ cannot be $\SL(2,\C)$-reducible and we have a contradiction.
\end{proof}

\section{Gluing 3-manifolds with nullhomologous rational longitudes} \label{sec:nullhomologous}

With Propositions~\ref{prop:toroidal-simple} and \ref{prop:rule-out-4b} in mind, our main goal in this lengthy section is to prove the following theorem.

\begin{theorem} \label{thm:nullhomologous-case}
Let $Y = M_1 \cup_{T^2} M_2$ be an irreducible 3-manifold with $H_1(Y;\Z) \cong \Z/4\Z$, where
\begin{enumerate}
\item $M_1$ and $M_2$ are irreducible, with incompressible torus boundaries;
\item $H_1(M_1;\Z) \cong H_1(M_2;\Z) \cong \Z$;
\item and if $\lambda_j \subset \partial M_j$ denotes the longitude of $M_j$, then both $M_1(\lambda_2)$ and $M_2(\lambda_1)$ are lens spaces of order $4$.
\end{enumerate}
Then there is a representation $\rho: \pi_1(Y) \to \SU(2)$ with non-abelian image.
\end{theorem}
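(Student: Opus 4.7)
The plan is to build, for each $j=1,2$, a representation $\rho_j : \pi_1(M_j) \to \SU(2)$ such that $\rho_1$ and $\rho_2$ agree after restriction to $\pi_1(T^2)$ under the gluing identification $\partial M_1 \cong \partial M_2$, with at least one of them non-abelian; these then glue to the desired $\rho$. Since each $M_j$ is an irreducible homology solid torus with incompressible boundary, I would first write $M_j \cong Y_j \setminus N(K_j)$, where $Y_j$ is the integer homology sphere obtained by filling $M_j$ along the meridian slope $\mu_j$ dual to the Seifert longitude $\lambda_j$, and $K_j$ is the core of the filling. If one of the $Y_j$, say $Y_1$, is not $\SU(2)$-abelian, then a non-abelian representation $\rho_{Y_1} : \pi_1(Y_1) \to \SU(2)$ pulls back to a non-abelian $\rho_{M_1}$ with $\rho_{M_1}(\mu_1) = 1$; its restriction to $\pi_1(T^2)$ takes values in a single maximal torus, and I would match it with an abelian representation of $\pi_1(M_2)$ using $H_1(M_2;\Z) \cong \Z$ together with the explicit gluing constraint $\Delta(\lambda_1, \lambda_2) = 4$.

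Assume from now on that both $Y_1$ and $Y_2$ are $\SU(2)$-abelian integer homology spheres. Each $M_j = Y_j \setminus N(K_j)$ is irreducible and boundary-incompressible, so Theorem~\ref{thm:pillowcase-alternative} produces an embedded simple closed curve $\gamma_j \subset i^*X(Y_j, K_j)$ lying in the interior of the cut-open pillowcase and generating $H_1(\cC; \Z)$ when coordinates on $X(T^2)$ are chosen via the $(\mu_j, \lambda_j)$ basis for $\pi_1(\partial M_j)$. Fixing a common basis for $\pi_1(T^2)$ coming from $\partial M_1$ and transporting $\gamma_2$ across the induced linear change of coordinates on $X(T^2)$, the curves $\gamma_1, \gamma_2$ sit in the same pillowcase and are essential in complementary directions. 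In addition, each image $i^*X^\irr(M_j) \subset X(T^2)$ must avoid certain explicit line segments, because any irreducible character on such a segment would give a non-abelian $\SU(2)$ representation of the lens space $M_j(\lambda_{3-j})$ of order $4$, which is $\SU(2)$-abelian.

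From here the goal is to find a point in $\gamma_1 \cap \gamma_2$ corresponding to a non-abelian character on at least one side. The unique candidate intersection point that is abelian on both sides is the pillar point $p = (\tfrac{\pi}{2}, 0) \in X(T^2)$. If neither $\gamma_j$ passes through $p$, or if both of them do so transversely, then an intersection off of $p$ is forced by standard topology on $X(T^2) \cong S^2$: in the former case by essentiality of the curves in complementary slope directions, and in the latter because two simple closed curves on $S^2$ cannot meet in a single transverse point. In either case the resulting intersection gives a non-abelian compatible pair.

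The main obstacle, and the genuinely new ingredient compared with \cite{gsz}, is the case in which exactly one curve, say $\gamma_2$, passes through $p$ and the two curves meet only at $p$. To handle it I would follow the strategy of \cite[\S4]{sz-pillowcase}: use the lower bound $\dim \KHI(Y_1, K_1) > 1$, valid because $K_1$ is non-trivial (otherwise $M_1 \cong S^1 \times D^2$ would have compressible boundary) and because $\KHI$ detects the unknot, to construct a simple closed curve $c \subset X(T^2)$ from $\gamma_2$ together with a line segment avoided by $X^\irr(M_1)$, arranged so that (i) $c$ divides the pillowcase into two regions of equal symplectic area, and (ii) any irreducible character of $M_1$ disjoint from $\gamma_2$ is also disjoint from $c$. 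The holonomy perturbation argument then identifies a chain complex for $\KHI(Y_1, K_1)$ whose generators are $\SU(2)$ characters of $M_1$ on a $C^1$-small approximation of $c$; at most one of these (coming from $p$) can be abelian, so the lower bound on $\dim \KHI$ forces a non-abelian character of $\pi_1(M_1)$ lying essentially on $c$, which must therefore meet $\gamma_2$. This produces the compatible pair $(\rho_1, \rho_2)$ and completes the construction of the non-abelian $\rho : \pi_1(Y) \to \SU(2)$.
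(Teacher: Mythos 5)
Your outline matches the paper's broad strategy, but there is a genuine gap at the most important step: forcing $\gamma_1$ and $\gamma_2$ to intersect when neither passes through $p=(\tfrac{\pi}{2},0)$. You assert this is ``forced by essentiality of the curves in complementary slope directions,'' but the two curves are \emph{not} essential in complementary directions. Under the gluing $\mu_1\sim\mu_2$, $\lambda_1^{-1}\sim\mu_2^4\lambda_2$, the coordinate change carrying $X(\partial M_2)$ to the $(\alpha,\beta)$-coordinates of $M_1$ is the involution $\sigma(\alpha,\beta)=(\alpha,2\pi-(4\alpha+\beta))$, which fixes the two punctures $(0,\pi)$ and $(\pi,\pi)$ and preserves the generator of $H_1$ of the twice-punctured pillowcase (an open cylinder). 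Hence $\gamma_1$ and $\sigma(\gamma_2)$ are \emph{homologous} essential curves in the same cylinder; their algebraic intersection number is $0$, and a priori they could be disjoint parallel circles. This is exactly why the paper's Proposition~\ref{prop:both-avoid-pi/2} is nontrivial: it first normalizes each $\gamma_j$ (via Lemma~\ref{lem:4-gamma}) to meet $L_\pi=\{4\alpha+\beta\equiv\pi\}$ only at $(\tfrac{\pi}{4},0)$, then uses the $\tau$-invariance of $i^*X(Y_1,K_1)$ to consider $\gamma_1\cup\tau(\gamma_1)$, and runs a separation/path-component argument with the segments $L_\pi^\ell, L_\pi^m, L_\pi^r$ and the line $L_0$ to derive a contradiction if $\sigma(\gamma_2)$ misses both $\gamma_1$ and $\tau(\gamma_1)$. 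Without some such argument your case (i) does not close.

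Two secondary points. First, your opening reduction (if $Y_1$ is not $\SU(2)$-abelian, glue a non-abelian $\rho_{M_1}$ with $\rho_{M_1}(\mu_1)=1$ to an abelian representation of $\pi_1(M_2)$) does not work as stated: an abelian $\rho_2$ kills $\lambda_2$ and the gluing relations then force $\rho_{M_1}(\lambda_1)=1$, which is not automatic. The paper avoids this case split entirely by producing the essential curves $\gamma_j$ unconditionally — not from Theorem~\ref{thm:pillowcase-alternative} (which needs $Y_j$ to be $\SU(2)$-abelian) but from the irreducibility of $(Y_j)_0(K_j)$ (Proposition~\ref{prop:zero-irreducible}) and the resulting nonvanishing of $I^w_*\big((Y_j)_0(K_j)\big)$. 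Second, your sketch of the $\KHI$ case is the right strategy, but note that building the equal-area curve $c$ requires first knowing that $\sigma(\gamma_2)$ avoids the outer segments $L_\pi^\ell\cup L_\pi^r$ (the paper's Lemma~\ref{lem:gamma-hits-lpi-and-pi/2} shows that otherwise one is already done), and the lower bound $\dim\KHI(Y_1,K_1)\geq 2$ for knots in general homology spheres rests on the Ghosh--Li characterization of product sutured manifolds rather than unknot detection in $S^3$.
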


We will first deduce Theorem~\ref{thm:main-Z/4} from Theorem~\ref{thm:nullhomologous-case} before going on to prove the latter.

\begin{proof}[Proof of Theorem~\ref{thm:main-Z/4}]
Let $Y$ be closed and orientable, with $H_1(Y) \cong \Z/4\Z$, and suppose that $Y$ is not homeomorphic to a lens space.  Then Lemma~\ref{lem:toroidal} says that $Y$ must have an incompressible torus.  Proposition~\ref{prop:toroidal-simple} subsequently produces another such $Y'$ that is both toroidal and $\SL(2,\C)$-reducible, and by Proposition~\ref{prop:rule-out-4b} it satisfies the hypotheses of Theorem~\ref{thm:nullhomologous-case}.  But then Theorem~\ref{thm:nullhomologous-case} gives us a representation
\[ \rho: \pi_1(Y') \to \SU(2) \hookrightarrow \SL(2,\C) \]
with irreducible image, and we have a contradiction.
\end{proof}

We begin in \S\ref{ssec:pillowcase-lens} by gathering some facts about the pillowcase images of knots with lens space surgeries.  In \S\ref{ssec:pillowcase-4-surgery} we specialize to the case where the lens space surgery has slope 4, showing that we can form $Y$ up to an overall orientation reversal by a very specific gluing of knot complements and describing essential curves in the pillowcase images of these knots.  Then in \S\ref{ssec:intersecting-curves} we begin to show that such $Y$ admit non-abelian representations $\rho: \pi_1(Y) \to \SU(2)$, using these essential curves to find the desired $\rho$ when neither curve passes through the point $(\frac{\pi}{2},0)$ in the pillowcase.  In \S\ref{ssec:khi-representations} we apply instanton knot homology to settle the remaining case, where at least one curve contains $(\frac{\pi}{2},0)$, and this completes the proof.

\subsection{Pillowcase images of knots with lens space surgeries} \label{ssec:pillowcase-lens}

Motivated by Proposition~\ref{prop:toroidal-simple}, we prove some general facts about the pillowcase images of knots in homology spheres that admit lens space surgeries.  We begin with the observation that a cyclic surgery forces the pillowcase image to avoid lines of the corresponding slope in the pillowcase.

\begin{lemma} \label{lem:cyclic-surgery-su2}
Let $K$ be a knot in an integer homology sphere $Y$, with complement $E_K$, and with meridian and longitude $\mu,\lambda \subset \partial E_K$.  Suppose that $Y_{r/s}(K)$ is a lens space for some relatively prime $r,s$ with $r\neq 0$ and $s \geq 1$.  Then any representation
\[ \rho: \pi_1(E_K) \to \SU(2) \]
such that $\rho(\mu^r\lambda^s) = \pm1$ must have finite cyclic image and satisfy $\rho(\lambda)=1$.  In other words, the intersection
\[ i^*X^\irr(Y,K) \cap \{ (\alpha,\beta) \mid r\alpha+s\beta \equiv 0\!\!\!\pmod{\pi} \} \subset X(T^2) \]
is empty.
\end{lemma}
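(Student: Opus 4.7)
The plan hinges on the standard observation that $\mu^r\lambda^s$ represents a meridian of the Dehn filling $Y_{r/s}(K)$, so the quotient $\pi_1(E_K)/\llangle\mu^r\lambda^s\rrangle$ is exactly $\pi_1(Y_{r/s}(K))$. Since $Y$ is an integer homology sphere, $H_1(Y_{r/s}(K);\Z)\cong\Z/r\Z$, and when $Y_{r/s}(K)$ is a lens space this forces $\pi_1(Y_{r/s}(K))$ to be finite cyclic of order $|r|$. The goal is to show that any $\rho$ killing $\mu^r\lambda^s$ up to sign actually has image in a cyclic subgroup of $\SU(2)$; the condition $\rho(\lambda)=1$ will then follow immediately.

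I would split on whether $\rho(\mu^r\lambda^s)$ equals $+1$ or $-1$. If it equals $+1$, then $\rho$ descends to a representation $\bar\rho\colon\pi_1(Y_{r/s}(K))\to\SU(2)$, whose image is a quotient of a finite cyclic group, hence finite cyclic. If instead $\rho(\mu^r\lambda^s)=-1$, then post-composing $\rho$ with the double cover $\SU(2)\to\SO(3)$ kills $\mu^r\lambda^s$ (since $-1$ is central and projects to the identity of $\SO(3)$), and so the composition descends to a representation $\pi_1(Y_{r/s}(K))\to\SO(3)$ with finite cyclic image $C\subset\SO(3)$. The key lifting fact I would invoke is that the preimage of any finite cyclic subgroup of $\SO(3)$ under the double cover $\SU(2)\to\SO(3)$ is itself cyclic (of order at most $2|C|$, generated by a lift of a rotation generator of $C$). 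The image of $\rho$ sits inside this cyclic preimage, so $\rho$ again has finite cyclic image.

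Either way $\rho$ has abelian image, so it factors through $H_1(E_K;\Z)\cong\Z\langle\mu\rangle$, in which $\lambda$ is nullhomologous because $Y$ is an integer homology sphere; hence $\rho(\lambda)=1$. For the pillowcase reformulation, any point $(\alpha,\beta)\in i^*X(Y,K)$ arises from a representation diagonalized as in \eqref{eq:pillowcase-coords}, for which $\rho(\mu^r\lambda^s)=\mathrm{diag}(e^{i(r\alpha+s\beta)},e^{-i(r\alpha+s\beta)})$; this equals $\pm 1$ precisely when $r\alpha+s\beta\equiv 0\pmod{\pi}$. On that locus the first part of the lemma forces $\rho$ to have cyclic (hence reducible) image, so no such point can lie in $i^*X^\irr(Y,K)$. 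The only step requiring any real care is the $-1$ case, where the $\SO(3)$-to-$\SU(2)$ lifting fact is what makes the argument go through; everything else is purely formal.
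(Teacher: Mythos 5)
Your proof is correct and follows essentially the same route as the paper's: both arguments pass to $\SO(3)$ to descend to the finite cyclic group $\pi_1(Y_{r/s}(K))$ and then conclude that the image of $\rho$ is cyclic (the paper does this in one step via $\ad\rho$, while you split on the sign of $\rho(\mu^r\lambda^s)$ and use the fact that the preimage in $\SU(2)$ of a finite cyclic subgroup of $\SO(3)$ is cyclic, which is the same lifting fact). The deduction of $\rho(\lambda)=1$ from $H_1(E_K)\cong\Z$ and the pillowcase reformulation also match the paper.
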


\begin{proof}
Let $\rho$ be such a representation.  Then $\ad\rho: \pi_1(E_K) \to \SO(3)$ sends $\mu^r\lambda^s$ to $1$, so it descends to a representation
\[ \pi_1(Y_{r/s}(K)) \cong \frac{\pi_1(E_K)}{\llangle \mu^r\lambda^s \rrangle} \to \SO(3) \]
with the same image, and then $\ad\rho$ must have finite cyclic image because $\pi_1(Y_{r/s}(K))$ is finite cyclic.  The image of $\rho$ is therefore a finite subgroup $G \subset \SU(2)$ whose image under $\ad: \SU(2) \to \SO(3)$ is cyclic, and it follows that $G$ is finite cyclic as well.  Now since $\rho$ has abelian image it factors through $H_1(E_K)$, in which $[\lambda]=0$, and so $\rho(\lambda)=1$ as claimed.
\end{proof}

See Figure~\ref{fig:4-avoiding} for an illustration in the case where $\frac{r}{s}=4$.

We can also show that the image $i^*X^\irr(Y,K)$ avoids an open neighborhood of each point $(\frac{k\pi}{|r|}, 0)$.  The following generalizes \cite[Lemma~10.4]{gsz}, which addressed the case where the slope $\frac{r}{s}$ has numerator an odd prime.

\begin{proposition} \label{prop:zero-root-of-unity}
Let $K$ be a knot in a homology sphere $Y$, with irreducible and boundary-incompressible complement.  Suppose that $Y_{r/s}(K)$ is a lens space, where $r \neq 0$ and $s\geq 1$ are relatively prime.  Then there is an open neighborhood of the set 
\[ \{ r\alpha + s\beta \equiv 0 \!\!\! \pmod{\pi} \} \subset X(T^2), \]
including each point $( \frac{k\pi}{|r|}, 0 )$ with $0 \leq k \leq |r|$, that is disjoint from the image $i^*X^\irr(Y,K)$ of the \emph{irreducible} representations of $\pi_1(E_K)$.  In particular, this neighborhood intersects the pillowcase image $i^*X(Y,K)$ only along the line segment $\beta\equiv0\pmod{2\pi}$.
\end{proposition}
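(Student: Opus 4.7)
The plan is to show that the closure $\overline{i^*X^\irr(Y,K)}$ in the compact pillowcase is disjoint from the line set $L := \{(\alpha,\beta) \in X(T^2) : r\alpha + s\beta \equiv 0 \pmod \pi\}$; once this is established, the open complement supplies the desired neighborhood of $L$, and the final sentence of the proposition follows because any abelian representation of $\pi_1(E_K)$ kills $\lambda$ (as in Example~\ref{ex:reducible-nullhomologous}), so $i^*X(Y,K)$ meets any such neighborhood only along $\{\beta \equiv 0 \pmod{2\pi}\}$.

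First I would identify the possible limit points of $i^*X^\irr(Y,K)$ on $L$. Lemma~\ref{lem:cyclic-surgery-su2} already gives $i^*X^\irr(Y,K) \cap L = \emptyset$, so suppose that some sequence $\chi_n \in i^*X^\irr(Y,K)$ converges to a point $\chi_\infty \in L$. Using the compactness of $R(E_K)/\SO(3)$, I lift the $\chi_n$ to $\rho_n \in R^\irr(E_K)$ and pass to a subsequence converging (after conjugation) to a representation $\rho_\infty \in R(E_K)$ whose restriction to $\pi_1(T^2)$ has character $\chi_\infty$. The condition $\chi_\infty \in L$ is equivalent to $\rho_\infty(\mu^r\lambda^s) = \pm 1$, and Lemma~\ref{lem:cyclic-surgery-su2} prevents $\rho_\infty$ from being irreducible, so $\rho_\infty$ must be abelian. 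Since $K$ is nullhomologous in the homology sphere $Y$, we then have $\rho_\infty(\lambda) = 1$, so $\chi_\infty = (\tfrac{k\pi}{|r|}, 0)$ for some integer $k$ with $0 \leq k \leq |r|$.

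The remaining task is to rule out accumulation at each point $p_k := (\tfrac{k\pi}{|r|}, 0)$. For this I would invoke the standard local analysis of the $\SU(2)$-character variety at abelian representations (Ben Abdelghani and Heusener--Porti, generalizing Frohman--Klassen): an abelian character of $\pi_1(E_K)$ sending $\mu$ to $e^{i\alpha_0}$ lies in the closure of the irreducible stratum only if $\Delta_K(e^{2i\alpha_0}) = 0$, where $\Delta_K \in \Z[t^{\pm 1}]$ is the Alexander polynomial of $K \subset Y$. So the proof reduces to showing that $\Delta_K(\zeta) \neq 0$ for every $|r|$-th root of unity $\zeta$.

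The final step, establishing this non-vanishing, is the main obstacle. I would prove it via the Milnor--Turaev surgery formula, which expresses the abelian Reidemeister torsion of $Y_{r/s}(K)$ at each nontrivial character of $H_1(Y_{r/s}(K)) \cong \Z/|r|\Z$ as a product of elementary factors with $\Delta_K$ evaluated at the corresponding $|r|$-th root of unity. Because $Y_{r/s}(K)$ is a lens space, and hence has nonzero abelian Reidemeister torsion at every nontrivial character, this forces $\Delta_K(\zeta) \neq 0$ for every nontrivial $|r|$-th root of unity $\zeta$; the value $\zeta = 1$ is automatic, since $\Delta_K(1) = \pm 1$ for any nullhomologous knot in a homology sphere. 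The genuinely new content compared to \cite[Lemma~10.4]{gsz} is the uniform handling of non-primitive roots of unity, which is automatic when $|r|$ is an odd prime but requires more care in the general case.
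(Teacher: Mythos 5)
Your reduction to abelian limit points at $(\tfrac{k\pi}{|r|},0)$ via compactness and Lemma~\ref{lem:cyclic-surgery-su2} matches the paper, and your treatment of the interior points $0<k<|r|$ is a valid alternative to the paper's argument: where you use the Milnor--Turaev surgery formula (equivalently, vanishing of the twisted homology of the lens space at nontrivial characters, which by Mayer--Vietoris forces $H_1(E_K;\C_\zeta)=0$ and hence $\Delta_K(\zeta)\neq0$), the paper instead deduces from $\Delta_K(e^{2\pi ik/|r|})=0$ that $b_1(\Sigma_{|r|}(K))>0$ and then contradicts this by exhibiting $\Sigma_{|r|}(K)_{\pm1/s}(\tilde K)$ as the universal ($|r|$-fold) cover of the lens space, hence $S^3$. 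These two arguments encode the same computation and either is fine; yours is arguably the more standard packaging.

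The genuine gap is at the endpoints $k=0$ and $k=|r|$, i.e.\ the central characters $(0,0)$ and $(\pi,0)$. The deformation-theoretic criterion you invoke (Ben Abdelghani, Heusener--Porti--Su\'arez Peir\'o, Frohman--Klassen) is a statement about \emph{non-central} abelian representations: its proof identifies the relevant obstruction space with a localization of the Alexander module via $H^1(E_K;\C_{\zeta})$ for $\zeta=e^{2i\alpha_0}\neq1$, and at $\zeta=1$ the coefficient module is trivial, the cohomology is just $H^1(E_K;\R^3)\cong\R^3$, and no conclusion about $\Delta_K(1)$ follows. So the observation that $\Delta_K(1)=\pm1$ does not rule out irreducibles accumulating at $(0,0)$ or $(\pi,0)$; indeed, if $Y$ itself admits irreducible $\SU(2)$ representations (which is not excluded by the hypotheses --- only the filling $Y_{r/s}(K)$ is assumed to be a lens space, not $Y$), these pull back to irreducible representations of $\pi_1(E_K)$ with $\rho(\mu)=1$, showing that the edge $\alpha=0$ cannot be controlled by the Alexander polynomial alone. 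What is actually needed here is an argument exploiting the $\SU(2)$-abelianness of the lens space filling; the paper supplies this by citing \cite[Lemma~3.2]{gsz}, which says precisely that if some Dehn filling of $E_K$ is $\SU(2)$-abelian then neither $(0,0)$ nor $(\pi,0)$ is a limit point of $i^*X^{\irr}(Y,K)$. With that lemma inserted in place of your appeal to $\Delta_K(1)=\pm1$, your proof goes through.
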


\begin{proof}
If we cannot find such a neighborhood, then some $(\alpha_0,\beta_0)$ with $r\alpha_0+s\beta_0 \in \pi\Z$ is a limit point of $i^*X^\irr(Y,K)$.  There is then a sequence of irreducible representations $\pi_1(E_K) \to \SU(2)$ whose images in the pillowcase converge to $(\alpha_0,\beta_0)$, and since $R(Y,K)$ is compact, some subsequence converges in $R(Y,K)$ to a representation $\rho: \pi_1(E_K) \to \SU(2)$ with pillowcase coordinates $i^*[\rho] = (\alpha_0,\beta_0)$.  By Lemma~\ref{lem:cyclic-surgery-su2} it follows that $\rho$ has abelian image and that $\beta_0=0$, and then $(\alpha_0,\beta_0) = (\frac{k\pi}{|r|},0)$ for some integer $k$ with $0 \leq k \leq |r|$.  In other words, up to conjugacy $\rho$ satisfies
\begin{align*}
\rho(\mu) &= e^{i\cdot k\pi/|r|}, &
\rho(\lambda) &= 1.
\end{align*}

Given that $\rho$ is a reducible limit of irreducible representations, with $\rho(\mu) = e^{i\cdot k\pi/|r|}$, we know once again from \cite[Lemma~3.2]{gsz} that $k$ is neither $0$ nor $|r|$.  Heusener, Porti, and Su\'arez Peir\'o \cite[Theorem~2.7]{heusener-porti-suarez} (cf.\ Klassen \cite{klassen}) moreover proved that the Alexander polynomial of $K \subset Y$ must satisfy
\[ \Delta_K\left( e^{i\cdot 2k\pi/|r|} \right) = 0. \]
Thus by \cite[Theorem~8.21]{burde-zieschang} the branched $|r|$-fold cyclic cover $\tilde{Y} = \Sigma_{|r|}(K)$ of $K\subset Y$ has $b_1(\tilde{Y}) > 0$.  If $\tilde{K} \subset \tilde{Y}$ is the lift of the branch locus $K$, then its meridian $\mu_{\tilde{K}}$ is a lift of $\mu_K^{|r|}$ while the longitude $\lambda_{\tilde{K}}$ lifts $\lambda_K$, and so the $|r|$-fold covering $\tilde{Y} \setminus N(\tilde{K}) \to Y\setminus N(K)$ extends to an $|r|$-fold covering
\[ \tilde{Y}_{\operatorname{sign}(r)/s}(\tilde{K}) \to Y_{r/s}(K). \]
But $Y_{r/s}(K)$ is a lens space of order $|r|$, so we must have $\tilde{Y}_{\pm1/s}(\tilde{K}) \cong S^3$.  In particular $\tilde{Y}$ is actually a homology sphere and we have a contradiction.
\end{proof}

\begin{figure}
\begin{tikzpicture}[style=thick]
\begin{scope}
  \draw plot[mark=*,mark size = 0.5pt] coordinates {(0,0)(3,0)(3,6)(0,6)} -- cycle; 
  \begin{scope}[decoration={markings,mark=at position 0.55 with {\arrow[scale=1]{>}}}]
    \draw[postaction={decorate}] (3,6) -- (0,6);
    \draw[postaction={decorate}] (3,0) -- (0,0);
  \end{scope}
  \begin{scope}[decoration={markings,mark=at position 0.55 with {\arrow[scale=1]{>>}}}]
    \draw[postaction={decorate}] (0,0) -- (0,3);
    \draw[postaction={decorate}] (0,6) -- (0,3);
  \end{scope}
  \begin{scope}[decoration={markings,mark=at position 0.575 with {\arrow[scale=1]{>>>}}}]
    \draw[postaction={decorate}] (3,0) -- (3,3);
    \draw[postaction={decorate}] (3,6) -- (3,3);
  \end{scope}
  \draw[dotted] (0,3) -- (3,3);
  \draw[thin,|-|] (0,-0.4) node[below] {\small$0$} -- node[midway,inner sep=1pt,fill=white] {$\alpha$} ++(3,0) node[below] {\small$\vphantom{0}\pi$};
  \draw[thin,|-|] (0.75,-0.4) node[below] {\small$\frac{\vphantom{3}\pi}{4}$} -- node[midway,inner sep=1pt,fill=white] {$\alpha$} ++(1.5,0) node[below] {\small$\vphantom{0}\frac{3\pi}{4}$};
  \draw[thin,|-|] (-0.3,0) node[left] {\small$0$} -- node[midway,inner sep=1pt,fill=white] {$\beta$} ++(0,6) node[left] {\small$2\pi$};
\end{scope}
\begin{scope}[style=ultra thick]
  \clip (0,0) rectangle (3,6);
  \foreach \i in {1,...,5} {
    \def\lcolor{\ifodd\i{red}\else{purple!50}\fi};
    \draw[\lcolor] (0,3*\i) -- ++(3,-12);
  }
\end{scope}
\node[red,left] at (0.75, 0.5) {$L_\pi$};
\node[purple,right] at (1.875, 4.25) {$L_0$};
\begin{scope}[color=blue,style=ultra thick]
   \draw plot [smooth] coordinates { (1.7,0) (1.6,2) (0.65,4) (0.65,6) } -- (1.7,6);
   \draw (0.65,0) -- (1.7,0);
   \node [left, inner sep=2pt] at (1.6,2) {$\gamma$};
\end{scope}
\end{tikzpicture}
\caption{If $Y_4(K)$ is a lens space, then the pillowcase image $i^*X(Y,K)$ must avoid the lines $L_0 = \{4\alpha+\beta\equiv 0 \pmod{2\pi}\}$ and $L_\pi = \{4\alpha+\beta \equiv \pi \pmod{2\pi} \}$, except at the points where $\beta\equiv0\pmod{2\pi}$.  The curve $\gamma \subset i^*X(Y,K)$ shown here, as provided by Proposition~\ref{prop:cyclic-curve}, only meets these lines at $(\frac{\pi}{4},0)$ and $(\frac{\pi}{2},0)$.}
\label{fig:4-avoiding}
\end{figure}
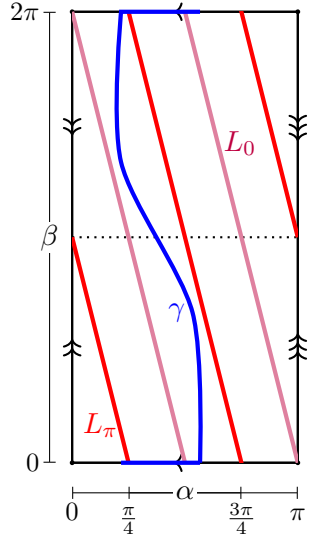

Given a knot $K\subset Y$ with a lens space surgery, we will now show, following \cite{zentner}, that under mild hypotheses the pillowcase image of $K$ must contain an essential simple closed curve in the twice-punctured pillowcase.  This relies on a nonvanishing result for the instanton homology of the zero-surgery $Y_0(K)$, for which by \cite{km-excision} it will suffice to know that $Y_0(K)$ is irreducible.  We thus prove the following generalization of part of \cite[Proposition~6.2]{gsz}, referring the reader to \cite{scott} for basic facts about Seifert fibered spaces.

\begin{proposition} \label{prop:zero-irreducible}
Let $Y$ be a homology 3-sphere, and let $K \subset Y$ be a knot with irreducible, boundary-incompressible exterior.  Suppose that $Y_{r/s}(K)$ is a lens space for some relatively prime integers $r$ and $s$, with $|r| \geq 2$ and $s \geq 1$.  Then $Y_0(K)$ is irreducible.
\end{proposition}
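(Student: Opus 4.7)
I would argue by contradiction. Suppose $Y_0(K)$ is reducible. Because $H_1(Y_0(K);\Z)\cong\Z$, a prime decomposition of $Y_0(K)$ must have the form $Y_0(K)\cong (S^1\times S^2)\#N$ for some closed orientable $3$-manifold $N$, and in particular there is an essential $2$-sphere $S\subset Y_0(K)$. Since $E_K$ is irreducible, $S$ cannot be isotoped into $E_K$; after an isotopy, $S$ meets the surgery solid torus in a nonempty collection of meridian disks, so that $P:=S\cap E_K$ is an essential planar surface properly embedded in $E_K$ with boundary consisting of parallel copies of the longitude $\lambda$. Thus the slope $\alpha=\lambda$ is a \emph{reducible} filling slope on $\partial E_K$, while the hypothesis that $Y_{r/s}(K)$ is a lens space says that $\beta=r\mu+s\lambda$ is a \emph{cyclic} filling slope, with $\Delta(\alpha,\beta)=|r|\geq 2$.

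The plan is to derive a contradiction from the coexistence of these two exceptional slopes at distance $|r|\geq 2$, by exploiting the geometric structure of $E_K$. Since $E_K$ is compact, orientable, irreducible, and boundary-incompressible with torus boundary, geometrization (together with the torus theorem) leaves three cases to rule out: (i) $E_K$ is hyperbolic, (ii) $E_K$ is Seifert fibered over an orbifold with nonempty boundary, or (iii) $E_K$ contains an essential torus.

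In the hyperbolic case I would invoke the distance estimate of Boyer--Zhang (and subsequent refinements of Gordon--Luecke and Matignon--Sayari) which says that on a hyperbolic manifold with single toroidal boundary, a reducible slope and a cyclic slope have distance at most $1$; applied to our $\alpha$ and $\beta$ this gives $|r|\leq 1$, contradicting the hypothesis. In the Seifert fibered case one knows from the classification of Seifert fibered knot complements in a homology sphere that $K$ is a torus knot or an iterated cable, and that in each such case the reducible and cyclic surgery slopes are completely enumerated (Moser, Bleiler--Litherland, Gordon); I would run through these families and verify that none simultaneously admits a reducible $0$-filling and a lens space $r/s$-filling with $|r|\geq 2$. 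In the toroidal case I would take the JSJ decomposition of $E_K$, track the induced boundary slopes on each piece, and apply the two previous cases together with the standard fact that an essential torus persists into the filling to propagate the constraints, so that some JSJ piece must violate one of the distance bounds already established.

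The main obstacle is the non-hyperbolic casework, and in particular the careful bookkeeping of surgery slopes through the JSJ decomposition in case (iii); the hyperbolic case is essentially a direct citation of known distance bounds, but ensuring that no Seifert fibered or toroidal counterexample slips through at $|r|=2$ is where the real work lies, since classical distance bounds between reducible and cyclic slopes are often sharp at $\Delta=2$ precisely for exceptional Seifert fibered exteriors.
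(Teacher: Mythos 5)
Your overall strategy (two exceptional slopes at distance $|r|\geq 2$, then casework on the geometry of $E_K$) is the same as the paper's, but as written the plan has a genuine gap in the Seifert fibered case, which you yourself identify as where the real work lies. The classifications you propose to cite (Moser, Bleiler--Litherland, Gordon) enumerate reducible and cyclic surgeries on torus knots and cables \emph{in $S^3$}; here $Y$ is an arbitrary homology sphere, and a Seifert fibered knot exterior in such a $Y$ need not be a torus knot or iterated cable exterior in the classical sense (e.g.\ $K$ could be a singular fiber of a Seifert fibered homology sphere such as the Poincar\'e sphere). There is no off-the-shelf enumeration to ``run through'' in this generality. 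The paper's route around this is substantive: it first shows the base orbifold of $E_K$ is orientable (via a double-cover argument using the homology sphere fillings $Y_{1/n}(K)$), then shows $Y_0(K)\cong S^1\times S^2$ (using Heil's result when $\lambda$ is the fiber slope), so that $E_K$ is also the exterior of a knot $K'\subset S^1\times S^2$ admitting a lens space surgery; it then invokes the Baker--Buck--Lecuona classification of such knots and a first-homology computation to force $E_K$ to be a solid torus, contradicting boundary-incompressibility. Without some replacement for this chain, your Seifert fibered case does not close, and note that $\Delta=2$ is exactly where reducible/cyclic distance bounds are sharp for Seifert fibered exteriors, so you cannot hope to dismiss it by a distance inequality alone.

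Two smaller points. First, the Boyer--Zhang theorem you cite for the hyperbolic case (reducible and cyclic slopes have distance at most $1$) is in fact stated for arbitrary compact, orientable, irreducible, boundary-incompressible manifolds with torus boundary that are not simple Seifert fibered spaces or cables on the twisted $I$-bundle over the Klein bottle; the paper applies it in this generality, which disposes of your hyperbolic and toroidal cases simultaneously (the cable-on-Klein-bottle exception is excluded because $H_2(Y;\Z/2\Z)=0$). Your proposed JSJ bookkeeping is therefore unnecessary, which is fortunate since it is also the least developed part of the plan. Second, your assertion that a reducible $Y_0(K)$ must decompose as $(S^1\times S^2)\#N$ is unjustified --- with $H_1\cong\Z$ it could a priori be $P\#Q$ with $H_1(P)\cong\Z$ and $Q$ a homology sphere --- but this is harmless, since reducibility already supplies the essential sphere you actually use.
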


\begin{proof}
Suppose instead that $Y_0(K)$ is reducible.  Then $K$ has a cyclic surgery of slope $\frac{r}{s}$ and a reducible surgery of slope $0$, and the distance between these slopes is $\Delta(\frac{r}{s},\frac{0}{1}) = |r| \geq 2$, so Boyer and Zhang \cite[Theorem~1.2(1)]{boyer-zhang-seminorms} proved that $E_K = Y \setminus N(K)$ is either a simple (i.e., irreducible and atoroidal) Seifert fibered manifold or a cable on the twisted I-bundle over the Klein bottle.  It cannot be the latter, because the Klein bottle would then represent a nonzero class in $H_2(Y;\Z/2\Z) = 0$ -- it does not even separate its own tubular neighborhood, so there is a closed curve dual to it -- so $E_K$ must be simple Seifert fibered.

Next, we claim that $E_K$ has orientable base orbifold.  Indeed, the Seifert fibration of $E_K$ extends over any Dehn filling $E_K(\gamma)$ as long as $\gamma$ is not the Seifert fiber slope, so it extends over all but at most one of the surgeries $Y_{1/n}(K)$, where $n \in \Z$.  If the base orbifold $\Sigma$ of some $Y_{1/n}(K)$ were non-orientable, then we could pull the Seifert fibration back along the orientation double cover $\tilde\Sigma \to \Sigma$ to get a double cover of $Y_{1/n}(K)$, which is impossible because $Y_{1/n}(K)$ is a homology sphere.  Thus $\Sigma$ must be orientable, and the base orbifold of $E_K$ is orientable as well because it is $\Sigma$ minus an open disk.

We now show that $Y_0(K) \cong S^1\times S^2$.  If the longitude $\lambda$ of $K$ is not the Seifert fiber slope on $\partial E_K$, then the Seifert fibration on $E_K$ extends to $E_K(\lambda) \cong Y_0(K)$, and then $Y_0(K)$ is reducible (by assumption) and Seifert fibered but does not have the homology of $\RP^3\#\RP^3$, so it must be $S^1\times S^2$.  Otherwise, if $\lambda$ is the Seifert fiber slope, then since the base orbifold of $E_K$ is orientable we know that $Y_0(K)$ will be a connected sum of several copies of $S^1\times S^2$ and lens spaces \cite[Proposition~2]{heil}, and then the only way that we can have $H_1(Y_0(K)) \cong \Z$ is if $Y_0(K) \cong S^1\times S^2$.

To summarize, we now know that $E_K = Y \setminus N(K)$ is Seifert fibered, with orientable base orbifold; that $Y_{r/s}(K)$ is a lens space; and that $Y_0(K) \cong S^1\times S^2$, so $E_K$ is also the exterior of a knot $K' \subset S^1\times S^2$.  From these facts, Baker, Buck, and Lecuona \cite[Theorem~1.18]{baker-buck-lecuona} proved that $K'$ must be either an $(a,b)$-torus knot or a $(2,\pm1)$-cable of a torus knot, and $K'$ cannot be cabled because we know from the above application of \cite{boyer-zhang-seminorms} that $E_K \cong E_{K'}$ is atoroidal.  As $K'$ is a $(a,b)$-torus knot in $S^1\times S^2$, which in \cite{baker-buck-lecuona} means a cable of $S^1\times\{\pt\}$ with winding number $a$, its complement $E_{K'}$ has first homology $H_1(E_{K'}) \cong \Z \oplus \Z/a\Z$.  But we also know that $H_1(E_K) \cong \Z$, so $|a|=1$ and therefore $K'$ is isotopic to $S^1\times \{\pt\}$.  Then $E_K \cong E_{K'}$ must be a solid torus, and this contradicts the incompressibility of $\partial E_K$, so $Y_0(K)$ must have been irreducible after all.
\end{proof}

\begin{proposition} \label{prop:cyclic-curve}
Let $K$ be a knot in a homology sphere $Y$, with irreducible, boundary-incompressible exterior.  Suppose that $Y_{r/s}(K)$ is a lens space for some relatively prime integers $r,s$ with $|r|\geq 2$ and $s \geq 1$.  Then the pillowcase image $i^*X(Y,K) \subset X(T^2)$ does not contain either of the points $(0,\pi)$ and $(\pi,\pi)$, and moreover there is a topologically embedded, closed curve
\[ \gamma \subset i^*X(Y,K) \]
that is homologically essential in the twice-punctured pillowcase $X(T^2) \setminus \{ (0,\pi), (\pi,\pi) \}$, and that does not contain either $(0,0)$ or $(\pi,0)$.
\end{proposition}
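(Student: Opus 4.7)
The plan is to adapt the proof of Theorem~\ref{thm:pillowcase-alternative} to the present setting, in which $Y$ is not assumed $\SU(2)$-abelian. First I would verify that neither $(0,\pi)$ nor $(\pi,\pi)$ lies in $i^*X(Y,K)$. Any abelian representation $\pi_1(E_K) \to \SU(2)$ factors through $H_1(E_K) \cong \Z$, on which $\lambda$ vanishes, so $\rho(\lambda) = 1$ and its pillowcase image satisfies $\beta \equiv 0 \pmod{2\pi}$; this rules out the abelian case. Both points satisfy $r\alpha + s\beta \equiv 0 \pmod{\pi}$, so Proposition~\ref{prop:zero-root-of-unity} places each in an open neighborhood disjoint from $i^*X^\irr(Y,K)$, which handles limits of irreducibles.

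Next I would produce the curve. Proposition~\ref{prop:zero-irreducible} gives that $Y_0(K)$ is irreducible, and the Kronheimer--Mrowka nonvanishing theorem (applied via excision as in \cite[Theorem~1.3]{lpcz}) then yields $I^w_*(Y_0(K)) \neq 0$ for the unique nontrivial class $w \in H^2(Y_0(K);\Z/2\Z)$. The pillowcase alternative \cite[Theorem~3.5]{lpcz} now furnishes a topologically embedded closed curve $\gamma_0 \subset i^*X(Y,K)$ which, viewed in $X(T^2) \cong S^2$, wraps nontrivially in the $\beta$-direction; equivalently, it separates the two boundary arcs $\{\alpha = 0\}$ and $\{\alpha = \pi\}$ of the pillowcase. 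Consequently $\gamma_0$ separates the orbifold pair $\{(0,0),(0,\pi)\}$ from $\{(\pi,0),(\pi,\pi)\}$, and in particular separates $(0,\pi)$ from $(\pi,\pi)$, so it is homologically essential in $X(T^2) \setminus \{(0,\pi),(\pi,\pi)\}$.

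Finally I would arrange that $\gamma_0$ avoids the two orbifold points $(0,0)$ and $(\pi,0)$. Proposition~\ref{prop:zero-root-of-unity} provides open neighborhoods $U_0 \ni (0,0)$ and $U_\pi \ni (\pi,0)$ disjoint from $i^*X^\irr(Y,K)$; since any non-irreducible character in $i^*X(Y,K)$ lies on the segment $\beta \equiv 0 \pmod{2\pi}$, the intersections $\gamma_0 \cap U_0$ and $\gamma_0 \cap U_\pi$ (if nonempty) must consist of short arcs along $\beta = 0$ emanating from the respective orbifold points. A local deformation retraction along $\beta = 0$, exactly as in the proof of Theorem~\ref{thm:pillowcase-alternative}, then pushes $\gamma_0$ off both points while keeping the image inside $i^*X(Y,K)$, producing the desired $\gamma$. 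The main obstacle is to establish the separating behavior of $\gamma_0$ without the global edge-avoidance of Lemma~\ref{lem:avoid-edges}; Proposition~\ref{prop:zero-root-of-unity} only provides pointwise control, but the explicit linear structure of the abelian image near $(0,0)$ and $(\pi,0)$ makes the local deformation go through unchanged.
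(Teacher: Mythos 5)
Your proposal is correct and follows essentially the same route as the paper: rule out $(0,\pi)$ and $(\pi,\pi)$ via the lens-space surgery constraint, get $I^w_*(Y_0(K)) \neq 0$ from irreducibility of $Y_0(K)$ (Proposition~\ref{prop:zero-irreducible}), invoke the holonomy-perturbation/pillowcase-alternative machinery to produce the essential curve, and then retract it off $(0,0)$ and $(\pi,0)$ using the local structure of the image near those points. The only differences are cosmetic choices of citation (the paper uses Lemma~\ref{lem:cyclic-surgery-su2} and \cite[Proposition~3.1]{gsz} where you use Proposition~\ref{prop:zero-root-of-unity} and \cite[Theorem~3.5]{lpcz}, the former being the version of the alternative whose hypothesis is exactly the avoidance of $(0,\pi)$ and $(\pi,\pi)$ rather than $\SU(2)$-abelianness of $Y$), and these do not affect the substance.
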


\begin{proof}
Let $E_K$ be the exterior of $K$.  If there were a representation $\rho: \pi_1(E_K) \to \SU(2)$ corresponding to either $(\alpha,\beta)=(0,\pi)$ or $(\alpha,\beta)=(\pi,\pi)$ in the pillowcase, then it would satisfy $\rho(\mu) = \pm1$ and $\rho(\lambda) = -1$, so that $\rho(\mu^r\lambda^s) = \pm1$.  Lemma~\ref{lem:cyclic-surgery-su2} says that any such representation necessarily has $\rho(\lambda)=1$, so the claimed $\rho$ cannot exist.

Now we observe that $Y_0(K)$ is irreducible by Proposition~\ref{prop:zero-irreducible}, and so \cite[Theorem~7.21]{km-excision} says that $I^w_*(Y_0(K)) \neq 0$, where $w$ is the nonzero element of $H^2(Y_0(K);\Z/2\Z) \cong \Z/2\Z$.  Since we have also shown that $i^*X(Y,K)$ avoids $(0,\pi)$ and $(\pi,\pi)$, we can apply \cite[Proposition~3.1]{gsz}, which is really a slight generalization of results of \cite[\S7]{zentner}, to find a homologically essential curve $\gamma \subset i^*X(Y,K)$.

We must now show that $\gamma$ can be arranged to avoid $(0,0)$ and $(\pi,0)$.  To do so, we observe that since $Y_{r/s}(K)$ is a lens space and therefore $\SU(2)$-abelian, we know by \cite[Lemma~3.2]{gsz} that neither $(0,0)$ nor $(\pi,0)$ is a limit point of the image $i^*X^\irr(Y,K)$ of the irreducible character variety of $K$.  This means that each of these points has an open neighborhood in the pillowcase whose intersection with $i^*X(Y,K)$ consists only of the line $\beta \equiv 0 \pmod{2\pi}$, realized by reducible characters.  There is therefore a deformation retraction of $i^*X(Y,K)$ taking it into the complement of these neighborhoods, just as in the proof of Theorem~\ref{thm:pillowcase-alternative}, and the image of $\gamma$ under this deformation retraction will be the desired curve, since it is still contained in $i^*X(Y,K)$ but does not pass through either $(0,0)$ or $(\pi,0)$.
\end{proof}

\subsection{Knots with cyclic surgeries of order 4} \label{ssec:pillowcase-4-surgery}

We now apply the results of the preceding subsection to the case where the prime power in question is $4$.  We must first see how to decompose a toroidal manifold with homology $\Z/4\Z$ into a pair of knot exteriors glued along their boundaries.

\begin{proposition} \label{prop:order-4-gluing}
Suppose that $Y = M_1 \cup_{T^2} M_2$, where $M_1$ and $M_2$ are compact oriented 3-manifolds with torus boundary satisfying
\begin{align*}
H_1(Y) &\cong \Z/4\Z, &
H_1(M_1) \cong H_1(M_2) &\cong \Z.
\end{align*}
Then up to possibly reversing the orientation of $Y$, we can write $M_i = Y_i \setminus N(K_i)$, where each $Y_i$ is a homology sphere, the knot $K_i \subset Y_i$ has meridian $\mu_i$ and longitude $\lambda_i$, and the gluing map identifies
\begin{equation} \label{eq:4-gluing}
\begin{aligned}
\mu_1 &\sim \mu_2, \\
\lambda_1^{-1} &\sim \mu_2^4\lambda_2.
\end{aligned}
\end{equation}
In particular, we have $(Y_1)_4(K_1) \cong M_1(\lambda_2)$ and $(Y_2)_4(K_2) \cong M_2(\lambda_1)$.
\end{proposition}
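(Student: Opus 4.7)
The plan is to realize each $M_i$ as a knot exterior in a homology sphere and then normalize the resulting gluing matrix. Since $H_1(M_i;\Z)\cong\Z$, the ``half lives half dies'' theorem shows that the rational longitude $\lambda_i\subset\partial M_i$ is nullhomologous in $M_i$, and any primitive class $\mu_i\in H_1(\partial M_i)$ with $\mu_i\cdot\lambda_i=1$ therefore generates $H_1(M_i)$.  Hence $Y_i := M_i(\mu_i)$ is an integer homology sphere, and the core $K_i$ of this Dehn filling is a knot in $Y_i$ with exterior $M_i$, meridian $\mu_i$, and Seifert longitude $\lambda_i$.

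To express the gluing as a matrix, pick an initial $\mu_2$ and write
\[ \mu_1 = a\mu_2 + b\lambda_2, \qquad \lambda_1 = c\mu_2 + d\lambda_2 \]
in $H_1(T^2)$, so that $M = \begin{pmatrix} a & b \\ c & d \end{pmatrix}\in\mathrm{GL}_2(\Z)$ has $\det M = -1$ because the gluing reverses the induced boundary orientations on $T^2$.  The Mayer--Vietoris calculation from the proof of Lemma~\ref{lem:toroidal-torsion} gives $H_1(Y;\Z)\cong\Z/|c|\Z$, forcing $|c|=4$; primitivity of $\lambda_1$ and the determinant condition then force both $a$ and $d$ to be odd.

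Now normalize $M$ using the available freedom on each side.  Replacing $\mu_1\mapsto\mu_1+k\lambda_1$ (a row operation $R_1\mapsto R_1+kR_2$) adjusts $a$ modulo $|c|=4$, so one can achieve $a=\pm1$; reversing the orientation of $K_1$ if necessary sends $M$ to $-M$, after which $a=1$.  Next, replacing $\mu_2\mapsto\mu_2+l\lambda_2$ (a column operation $C_2\mapsto C_2-lC_1$) clears $b$ to $0$, and $\det M=-1$ then forces $d=-1$.  The matrix is now $M=\begin{pmatrix}1&0\\c&-1\end{pmatrix}$ with $c=\pm4$.  A direct computation shows that reversing the orientation of $Y$ sends $(a,b,c,d)\mapsto(a,-b,-c,d)$: each meridian $\mu_i$ flips under the right-hand rule, while each rational longitude $\lambda_i$, determined as a homology class of $M_i$, does not.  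So if $c=+4$, replacing $Y$ with $\bar Y$ achieves $c=-4$, yielding the claimed gluing $\mu_1\sim\mu_2$, $\lambda_1^{-1}\sim\mu_2^4\lambda_2$.  The identifications $M_1(\lambda_2)\cong(Y_1)_4(K_1)$ and $M_2(\lambda_1)\cong(Y_2)_4(K_2)$ follow immediately, since in $(\mu_i,\lambda_i)$-coordinates on $\partial M_i$ the curve $\lambda_j$ ($j\neq i$) has slope $-4/-1=4$.

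The main care needed is in the sign bookkeeping of the last paragraph: confirming the precise effect of each allowed operation (meridian adjustment, reversal of a knot's orientation, and reversal of the ambient orientation of $Y$) on the entries of $M$, and checking that these operations collectively suffice to realize the stated canonical form.
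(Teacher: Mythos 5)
Your argument is correct and follows essentially the same route as the paper: fill each $M_i$ along a curve dual to its longitude to obtain a knot exterior in a homology sphere, then normalize the gluing matrix via changes of the dual curve (i.e.\ of the ambient homology sphere), knot-orientation reversal, and reversal of the orientation of $Y$. The one cosmetic discrepancy is the convention for which peripheral curve changes sign under ambient orientation reversal (you flip $\mu_i$, the paper flips $\lambda_i$); since the two conventions differ by negating both basis vectors on each side of the gluing, the induced action on the matrix is identical, so your conclusion stands.
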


\begin{proof}
We choose peripheral curves $\mu_i \subset \partial M_i$ that are dual to the longitudes $\lambda_i$, where if $\lambda_1$ is dual to $\lambda_2$ then we will take $\mu_1 = \lambda_2$ and $\mu_2 = \lambda_1$.  Then we let
\[ Y_i = M_i(\mu_i) \qquad (i=1,2), \]
so that each $Y_i$ is a homology sphere, and the core of each Dehn filling is a nullhomologous knot $K_i \subset Y_i$ with complement $M_i$.  If we write the gluing map $\partial M_1 \xrightarrow{\cong} \partial M_2$ as
\begin{equation} \label{eq:zhs3-gluing}
\begin{aligned}
\mu_1 &\sim \mu_2^a\lambda_2^b \\
\lambda_1 &\sim \mu_2^c\lambda_2^d,
\end{aligned}
\end{equation}
with $ad-bc=-1$, then $(Y_2)_{c/d}(K_2) \cong M_2(\lambda_1)$ has homology $\Z/4\Z$, so $|c|=4$.

We can replace $Y_1$ with some $\frac{1}{k}$-surgery on $K_1$, and $K_1$ with the core of this surgery; this preserves the longitude $\lambda_1$ and the complement $M_1$ but replaces $\mu_1$ with $\mu_1\lambda_1^k$, so in these new coordinates the gluing map is
\begin{align*}
\mu_1 &\sim \mu_2^{a+kc}\lambda_2^{b+kd} \\
\lambda_1 &\sim \mu_2^c\lambda_2^d.
\end{align*}
We know that $a$ must be odd since $ad-bc=-1$ and $c=\pm4$, so $a\equiv \pm1\pmod{4}$ and this means that we can choose $k$ so that $a+kc=\pm 1$.  In other words, we can choose $\mu_1$ so that $a = \pm1$ in \eqref{eq:zhs3-gluing}.  Having done this, we reverse the orientation of $K_2$ if needed to fix $a=1$, so that we have an identification of the form $\mu_1 \sim \mu_2 \lambda_2^b$; and then we replace $Y_2$ and $K_2$ with $\frac{1}{b}$-surgery on $K_2$ and the core of this surgery.

At this point we have arranged for the gluing map \eqref{eq:zhs3-gluing} to have the form
\begin{align*}
\mu_1 &\sim \mu_2 \\
\lambda_1 &\sim \mu_2^c\lambda_2^d
\end{align*}
for some integers $c=\pm 4$ and $d$, and the condition $ad-bc=-1$ means that $d=-1$.  If $c=4$ then we can reverse the orientations of both $Y_1$ and $Y_2$ (and hence of $Y$ itself), while fixing the orientations of the $K_i$; this fixes $\mu_1$ and $\mu_2$ while replacing $\lambda_1$ and $\lambda_2$ with their inverses, so that $\lambda_1 \sim \mu_2^c \lambda_2^{-1}$ becomes $\lambda_1 \sim \mu_2^{-c} \lambda_2^{-1}$, and thus we are left with $c=-4$ instead.  Now we have $\mu_1 \sim \mu_2$ and $\lambda_1 \sim \mu_2^{-4}\lambda_2^{-1}$ as in \eqref{eq:4-gluing}, and moreover
\begin{align*}
M_1(\lambda_2^{-1}) &\cong M_1(\mu_1^4\lambda_1) \cong (Y_1)_4(K_1), \\
M_2(\lambda_1^{-1}) &\cong M_2(\mu_2^4\lambda_2) \cong (Y_2)_4(K_2),
\end{align*}
exactly as claimed.
\end{proof}

We introduce a pair of involutions of the pillowcase $X(T^2)$, given in $(\alpha,\beta)$ coordinates by
\begin{align}
\sigma(\alpha,\beta) &= (\alpha, 2\pi-(4\alpha+\beta)), \label{eq:sigma} \\
\tau(\alpha,\beta) &= (\pi-\alpha,2\pi-\beta). \label{eq:tau}
\end{align}
It is straightforward to check that these are indeed involutions, and that they commute since
\begin{align*}
\sigma(\tau(\alpha,\beta)) &= \sigma(\pi-\alpha, 2\pi-\beta) \\
&= (\pi-\alpha, 2\pi - (4(\pi-\alpha) + (2\pi-\beta))) \\
&\sim (\pi-\alpha, 2\pi - (2\pi - (4\alpha+\beta))) \\
&= \tau(\alpha, 2\pi-(4\alpha+\beta)) \\
&= \tau(\sigma(\alpha,\beta)).
\end{align*}

\begin{lemma} \label{lem:tau-involution}
Let $K$ be a knot in a homology sphere $Y$.  Then the pillowcase image $i^*X(Y,K)$ is fixed setwise by the involution $\tau$ of \eqref{eq:tau}.
\end{lemma}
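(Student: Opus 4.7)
The plan is to realize $\tau$ as the action of twisting a representation $\rho : \pi_1(E_K) \to \SU(2)$ by a central sign character. Since $Y$ is an integer homology sphere, $K$ is nullhomologous, so $H_1(E_K;\Z) \cong \Z$ is generated by the meridian $\mu$ and the longitude $\lambda$ maps to $0$. Composing abelianization with reduction mod $2$ yields a homomorphism
\[ \epsilon : \pi_1(E_K) \to \Z/2\Z \]
with $\epsilon(\mu) = 1$ and $\epsilon(\lambda) = 0$.

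Given any $\rho : \pi_1(E_K) \to \SU(2)$, I would define $\rho' : \pi_1(E_K) \to \SU(2)$ by
\[ \rho'(g) = (-1)^{\epsilon(g)} \rho(g). \]
Because $\pm 1$ lies in the center of $\SU(2)$ and $\epsilon$ is a homomorphism, $\rho'$ is again a group homomorphism, so $[\rho'] \in X(E_K)$. Moreover, the construction is conjugation-equivariant in $\rho$, so it descends to a well-defined involution of $X(E_K)$ that preserves $i^*X(Y,K)$.

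It remains to compute $i^*[\rho']$ in the pillowcase coordinates \eqref{eq:pillowcase-coords}. After conjugating so that $\rho(\mu) = e^{i\alpha}\cdot\mathrm{diag}$ and $\rho(\lambda) = e^{i\beta}\cdot\mathrm{diag}$ in the standard diagonal form, we read off
\[ \rho'(\mu) = -\rho(\mu), \qquad \rho'(\lambda) = \rho(\lambda), \]
so under the quotient map $(\R/2\pi\Z)^2 \to X(T^2)$ the class $i^*[\rho']$ is represented by $(\alpha+\pi,\beta)$. Applying the identification $(\alpha,\beta) \sim (-\alpha,-\beta)$ that defines $X(T^2)$ and reducing modulo $2\pi$ gives
\[ (\alpha+\pi,\beta) \sim (-\alpha-\pi,-\beta) \sim (\pi-\alpha,2\pi-\beta) = \tau(\alpha,\beta), \]
so $i^*[\rho'] = \tau(i^*[\rho])$. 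Applying the same construction to $\rho'$ returns $\rho$ (since $\epsilon$ is $\Z/2\Z$-valued), which matches $\tau \circ \tau = \mathrm{id}$, so $\tau$ preserves $i^*X(Y,K)$ setwise as claimed.

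There is no real obstacle here: the only subtlety is keeping track of the fundamental-domain identification when rewriting $(\alpha+\pi,\beta)$ inside $[0,\pi]_\alpha \times [0,2\pi]_\beta$, and the fact that $\lambda$ is nullhomologous in $E_K$ (which ensures $\epsilon(\lambda)=0$ and hence that $\rho'(\lambda) = \rho(\lambda)$ rather than $-\rho(\lambda)$) is exactly what makes $\tau$, as opposed to some other involution, the right symmetry.
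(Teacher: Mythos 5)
Your proof is correct and is essentially the same as the paper's: both twist $\rho$ by the central sign character sending $\mu \mapsto -1$ and $\lambda \mapsto 1$ (which exists precisely because $\lambda$ is nullhomologous in $E_K$) and then read off that the pillowcase coordinates change by $(\alpha,\beta) \mapsto (\pi+\alpha,\beta) \sim (\pi-\alpha,2\pi-\beta) = \tau(\alpha,\beta)$.
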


\begin{proof}
Letting $E_K = Y \setminus N(K)$, we note that $H_1(E_K) \cong \Z$ is generated by the meridian $\mu$ of $K$, and that the longitude $\lambda$ is nullhomologous.  Given a point $(\alpha,\beta) \in i^*X(Y,K)$, corresponding to a representation $\rho: \pi_1(E_K) \to \SU(2)$ with
\begin{align*}
\rho(\mu) &= e^{i\alpha}, &
\rho(\lambda) &= e^{i\beta},
\end{align*}
we fix the central character $\chi: \pi_1(E_K) \twoheadrightarrow H_1(E_k) \to \{\pm1\}$ with $\chi(\mu) = -1$ and $\chi(\lambda) = 1$, and consider the representation
\[ \rho' = \chi \cdot \rho: \pi_1(E_K) \to \SU(2). \]
This satisfies $\rho'(\mu) = e^{i(\pi+\alpha)}$ and $\rho'(\lambda) = e^{i\beta}$, so its pillowcase image
\[ i^*([\rho']) = (\pi+\alpha,\beta) \sim (\pi-\alpha,2\pi-\beta) = \tau(\alpha,\beta) \]
belongs to $i^*X(K)$ as well.
\end{proof}

With Proposition~\ref{prop:order-4-gluing} in mind, we now introduce the following refinement of Proposition~\ref{prop:cyclic-curve}.

\begin{lemma} \label{lem:4-gamma}
Let $K \subset Y$ be a knot in a homology sphere, with irreducible, boundary-incompressible complement, and suppose that $Y_4(K)$ is a lens space.  Then there is a topologically embedded, closed curve
\[ \gamma \subset i^*X(Y,K) \]
that is homologically essential in the twice-punctured pillowcase
\[ P = X(T^2) \setminus \{(0,\pi), (\pi,\pi)\}, \]
and that contains the point $(\frac{\pi}{4},0)$.  This curve $\gamma$ does not contain any other point $(\alpha,\beta)$ with $4\alpha+\beta\equiv 0\pmod{\pi}$, except possibly for $(\frac{\pi}{2},0)$.
\end{lemma}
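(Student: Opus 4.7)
The plan is to extract $\gamma$ from the essential curve supplied by Proposition~\ref{prop:cyclic-curve}, to pin down its intersections with the forbidden lines via an algebraic intersection (parity) argument, and to symmetrize using Lemma~\ref{lem:tau-involution}.

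First I would apply Proposition~\ref{prop:cyclic-curve} with $(r,s) = (4,1)$ to obtain a topologically embedded closed curve $\gamma_0 \subset i^*X(Y,K)$, essential in $P$ and avoiding $(0,0)$ and $(\pi,0)$. By Proposition~\ref{prop:zero-root-of-unity} the set $\{4\alpha+\beta\equiv 0 \pmod\pi\}$ has an open neighborhood disjoint from $i^*X^\irr(Y,K)$, so $\gamma_0$ meets this set only at reducible characters with $\beta\equiv 0 \pmod{2\pi}$ (using Example~\ref{ex:reducible-nullhomologous}); combined with the avoidance of $(0,0)$ and $(\pi,0)$, this yields
\[
  \gamma_0 \cap \{4\alpha+\beta \equiv 0 \pmod \pi\} \subseteq \big\{(\tfrac{\pi}{4},0),(\tfrac{\pi}{2},0),(\tfrac{3\pi}{4},0)\big\}.
\]

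The key step is to consider the sub-arc $L_\pi = \{(\alpha,\beta) : 4\alpha+\beta \equiv \pi \pmod{2\pi}\}$, which restricts to a properly embedded arc in $P$ joining the two punctures $(0,\pi)$ and $(\pi,\pi)$, and which meets $\{\beta=0\}$ exactly at $(\tfrac{\pi}{4},0)$ and $(\tfrac{3\pi}{4},0)$. Since $L_\pi$ lies inside the forbidden set, $\gamma_0 \cap L_\pi \subseteq \{(\tfrac{\pi}{4},0),(\tfrac{3\pi}{4},0)\}$. Near either candidate point, Proposition~\ref{prop:zero-root-of-unity} forces $\gamma_0$ to lie on the slope-$0$ line $\{\beta=0\}$, whereas $L_\pi$ has slope $-4$ in the pillowcase coordinates, so each such intersection is topologically transverse. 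Since $P$ is a cylinder and $\gamma_0$ is an essential simple closed curve, it represents $\pm 1$ times a generator of $H_1(P;\Z)\cong\Z$, and the algebraic intersection with the proper arc $L_\pi$ equals $\pm 1$; transversality of all geometric intersections forces $|\gamma_0 \cap L_\pi|$ to be odd, and since $|\gamma_0 \cap L_\pi| \leq 2$ we conclude $|\gamma_0 \cap L_\pi| = 1$.

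Finally, by Lemma~\ref{lem:tau-involution} the involution $\tau$ preserves $i^*X(Y,K)$; a direct check shows $\tau$ swaps $(\tfrac{\pi}{4},0)\leftrightarrow(\tfrac{3\pi}{4},0)$ and $(0,0)\leftrightarrow(\pi,0)$, fixes $(\tfrac{\pi}{2},0)$, and exchanges the two punctures. Thus $\tau(\gamma_0)$ is still topologically embedded, essential in $P$, and avoids $(0,0)$ and $(\pi,0)$, so after possibly replacing $\gamma_0$ by $\tau(\gamma_0)$ I would take $\gamma = \gamma_0$, which then contains $(\tfrac{\pi}{4},0)$ but not $(\tfrac{3\pi}{4},0)$; the point $(\tfrac{\pi}{2},0)$ may or may not lie on $\gamma$, accounting for the ``except possibly'' clause. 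I expect the part needing the most care is justifying the algebraic intersection pairing in the topological (non-smooth) category — in particular, confirming that topological transversality at $(\tfrac{\pi}{4},0)$ and $(\tfrac{3\pi}{4},0)$ is enough to read off the parity of $|\gamma_0 \cap L_\pi|$ from the algebraic intersection number.
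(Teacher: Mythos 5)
Your proposal is correct and follows essentially the same route as the paper: extract $\gamma_0$ from Proposition~\ref{prop:cyclic-curve}, use Lemma~\ref{lem:cyclic-surgery-su2} and Proposition~\ref{prop:zero-root-of-unity} to confine and make transverse the intersections with $L_\pi$, deduce from $\gamma_0 \cdot L_\pi = \pm 1$ that exactly one of $(\tfrac{\pi}{4},0)$, $(\tfrac{3\pi}{4},0)$ lies on $\gamma_0$, and apply $\tau$ if necessary. Your worry about transversality in the topological category is already resolved by your own observation that $\gamma_0$ locally coincides with the straight segment $\{\beta \equiv 0\}$ near each candidate point, which is exactly how the paper handles it.
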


\begin{proof}
We let $\gamma_0$ be the curve provided by Proposition~\ref{prop:cyclic-curve}, noting that this curve avoids both $(0,0)$ and $(\pi,0)$.  Since $\gamma_0$ generates $H_1(P) \cong \Z$, it has intersection number $\pm1$ with the curve
\[ L_\pi = \{4\alpha+\beta \equiv \pi\!\!\!\pmod{2\pi}\} = \{ (\alpha, \pi-4\alpha) \mid 0<\alpha<\pi \} \]
that ends at the punctures $(0,\pi)$ and $(\pi,\pi)$, as shown in Figure~\ref{fig:4-avoiding}.

According to Lemma~\ref{lem:cyclic-surgery-su2}, the curve $\gamma_0$ can only meet $L_\pi$ at those points of $L_\pi$ where $\beta \equiv 0\pmod{2\pi}$, namely $(\frac{\pi}{4},0)$ and $(\frac{3\pi}{4},0)$.  If $\gamma_0$ passes through either of these points then there is a neighborhood of that point where $\gamma_0$ coincides with the edge $\{\beta\equiv 0\pmod{2\pi}\}$, by  Proposition~\ref{prop:zero-root-of-unity}, and so $\gamma_0$ meets $L_\pi$ transversely there.  This means that the points of
\[ \gamma_0 \cap L_\pi \subset \left\{ (\tfrac{\pi}{4},0), (\tfrac{3\pi}{4},0) \right\} \]
each contribute $\pm1$ to the intersection number $\gamma_0 \cdot L_\pi = \pm1$, and so exactly one of these must be a point of intersection.  If the point is $(\frac{\pi}{4},0)$, then we take $\gamma = \gamma_0$ and we are done.  If instead it is $(\frac{3\pi}{4},0)$, then we note that $i^*X(Y,K)$ is closed under the involution $\tau$ of \eqref{eq:tau}, by Lemma~\ref{lem:tau-involution}, so we take $\gamma = \tau(\gamma_0)$ instead.

As for other points $(\alpha,\beta) \in \gamma$ with $4\alpha+\beta\equiv 0\pmod{\pi}$, Lemma~\ref{lem:cyclic-surgery-su2} says that these must have $\beta\equiv 0 \pmod{2\pi}$, and then we have $\alpha = \frac{k\pi}{4}$ for some integer $k$, with $0\leq k \leq 4$ corresponding to $0\leq \alpha \leq \pi$.  We have already excluded $k=0,3,4$ and arranged that $(\frac{\pi}{4},0) \in \gamma$, so this leaves $(\frac{\pi}{2},0)$ (corresponding to $k=2$) as the only remaining possibility.
\end{proof}

\subsection{Intersecting curves of characters} \label{ssec:intersecting-curves}

The curves $\gamma$ produced by Lemma~\ref{lem:4-gamma} will be key to finding representations of closed 3-manifold groups.  We will repeatedly use the following two facts in our arguments:
\begin{itemize}
\item any pair of closed curves in the pillowcase has intersection number zero; 
\item and if $\gamma$ is an essential, simple closed curve in the twice-punctured pillowcase $P$, then it generates $H_1(P) \cong \Z$ and thus has intersection number $\pm1$ with any arc connecting the two punctures.
\end{itemize}
The second of these facts already played a role in the proof of Lemma~\ref{lem:4-gamma}.  We recall below that $\sigma$ denotes the involution \eqref{eq:sigma} of the pillowcase.

\begin{lemma} \label{lem:4-intersection}
Let $K_1 \subset Y_1$ and $K_2 \subset Y_2$ be knots in homology spheres.  Form a closed 3-manifold $Y$ by gluing together their exteriors $M_i = Y_i \setminus N(K_i)$ as in \eqref{eq:4-gluing}, so that
\begin{align*}
\mu_1 &\sim \mu_2, &
\lambda_1^{-1} &\sim \mu_2^4\lambda_2.
\end{align*}
If the intersection
\[ i^*X(Y_1,K_1) \cap \sigma\big(i^*X(Y_2,K_2)\big) \]
contains a point other than $(0,0)$, $(\frac{\pi}{2},0)$, and $(\pi,0)$, then there is a representation
\[ \pi_1(Y) \to \SU(2) \]
with non-abelian image.
\end{lemma}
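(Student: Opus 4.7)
The plan is to take any point $(\alpha,\beta)$ in the intersection, lift it to representations $\rho_1:\pi_1(M_1)\to\SU(2)$ and $\rho_2:\pi_1(M_2)\to\SU(2)$ whose restrictions to $\pi_1(T^2)$ agree under the gluing \eqref{eq:4-gluing}, glue them to a homomorphism $\rho:\pi_1(Y)\to\SU(2)$, and then show that the three exceptional points are precisely those at which both $\rho_1$ and $\rho_2$ are forced to have abelian image.

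First I would unpack the definition of $\sigma$ to verify that the point $(\alpha,\beta)\in i^*X(Y_1,K_1)\cap\sigma(i^*X(Y_2,K_2))$ is compatible with the gluing. Writing $\rho_1(\mu_1)=e^{i\alpha}$, $\rho_1(\lambda_1)=e^{i\beta}$ and $\rho_2(\mu_2)=e^{i\alpha}$, $\rho_2(\lambda_2)=e^{-i(4\alpha+\beta)}$ in simultaneously diagonal form, the identifications $\mu_1\sim\mu_2$ and $\lambda_1^{-1}\sim\mu_2^4\lambda_2^{\vphantom{4}}$ correspond under $\rho_1$ and $\rho_2$ to $e^{i\alpha}=e^{i\alpha}$ and $e^{-i\beta}=e^{4i\alpha}\cdot e^{-i(4\alpha+\beta)}$, both of which hold. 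The residual $(\alpha,\beta)\sim(-\alpha,-\beta)$ ambiguity in the pillowcase is absorbed by conjugating one of the $\rho_i$ by the quaternion $j$, after which a further conjugation in the common centralizer $U(1)$ puts the two restrictions to $\pi_1(T^2)$ into literal equality rather than mere conjugacy. Van Kampen then glues $\rho_1$ and $\rho_2$ to a homomorphism $\rho:\pi_1(Y)\to\SU(2)$.

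Next I would characterize when each $\rho_i$ is forced to have abelian image. Since $H_1(M_i)\cong\Z$ is generated by $\mu_i$ with $\lambda_i$ nullhomologous, $\rho_i$ has abelian image if and only if $\rho_i(\lambda_i)=1$. So $\rho_1$ must be non-abelian whenever $\beta\not\equiv 0\pmod{2\pi}$, and $\rho_2$ must be non-abelian whenever $4\alpha+\beta\not\equiv 0\pmod{2\pi}$. In either case the glued representation $\rho$ has non-abelian image since its image contains that of the relevant non-abelian $\rho_i$.

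Finally I would identify the bad points: both abelianness conditions hold simultaneously only when $\beta=0$ and $4\alpha\equiv 0\pmod{2\pi}$, which in the fundamental domain $\alpha\in[0,\pi]$ forces $\alpha\in\{0,\pi/2,\pi\}$. These are exactly the three excluded points in the statement, so any other intersection point yields the desired non-abelian representation of $\pi_1(Y)$. The only non-routine part is the bookkeeping with $\sigma$ and the pillowcase symmetry at the very start, together with the observation that abelianness of $\rho_i$ is an intrinsic property of the pillowcase point (not of the choice of lift), which is what makes this argument work uniformly at every non-exceptional intersection.
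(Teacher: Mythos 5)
Your proposal is correct and follows essentially the same route as the paper: lift the intersection point to representations $\rho_1,\rho_2$ whose restrictions to $\pi_1(T^2)$ match under the gluing \eqref{eq:4-gluing} (the paper does this by writing $(\alpha,\beta)=\sigma(\alpha_2,\beta_2)$ rather than computing $\sigma(\alpha,\beta)$ directly, which amounts to the same thing), glue them, and observe that both $\rho_1$ and $\rho_2$ can be abelian only when $\beta\equiv 4\alpha+\beta\equiv 0\pmod{2\pi}$, which picks out exactly the three excluded points. One small caution: your assertion that $\rho_i$ has abelian image \emph{if and only if} $\rho_i(\lambda_i)=1$ is really only an ``only if'' in general, but since you only use the contrapositive ($\rho_i(\lambda_i)\neq 1$ forces non-abelian image), the argument is unaffected.
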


\begin{proof}
Let $(\alpha,\beta)$ be a point of the intersection.  Then there are representations
\[ \rho_i: \pi_1(M_i) \to \SU(2) \]
with $\rho_1(\mu_1) = e^{i\alpha}$ and $\rho_1(\lambda_1) = e^{i\beta}$, and with 
\begin{align*}
\rho_1(\mu_1) &= e^{i\alpha} & \rho_2(\mu_2) &= e^{i\alpha_2} \\
\rho_1(\lambda_1) &= e^{i\beta}, & \rho_2(\lambda_2) &= e^{i\beta_2}
\end{align*}
such that $(\alpha,\beta) = \sigma(\alpha_2,\beta_2) = (\alpha_2,2\pi-(4\alpha_2+\beta_2))$.  In particular we have 
\begin{align*}
\rho_2(\mu_2) &= e^{i\alpha_2} = e^{i\alpha} = \rho_1(\mu_1), \\
\rho_2(\mu_2^4\lambda_2) &= e^{i(4\alpha_2+\beta_2)} = e^{i(2\pi-\beta)} = \rho_1(\lambda_1^{-1})
\end{align*}
and therefore $\rho_1$ and $\rho_2$ glue together to give a representation $\rho: \pi_1(Y) \to \SU(2)$ whose image contains the images of both $\rho_1$ and $\rho_2$.

Now if $\beta \not\equiv 0 \pmod{2\pi}$ then $\rho_1$ has non-abelian image, and likewise if $\beta_2 \not\equiv 0 \pmod{2\pi}$ then $\rho_2$ has non-abelian image.  Thus $\rho$ is non-abelian unless both $\beta$ and $\beta_2$ are multiples of $2\pi$.  Since $\sigma$ is an involution we have $(\alpha_2,\beta_2) = \sigma(\alpha,\beta) = (\alpha, 2\pi-(4\alpha+\beta))$, so $\beta \equiv \beta_2 \equiv 0 \pmod{2\pi}$ is equivalent to
\[ \beta \equiv 4\alpha+\beta \equiv 0 \pmod{2\pi} \]
and this corresponds to the three points $(0,0)$, $(\frac{\pi}{2},0)$, $(\pi,0)$ in the pillowcase.  Thus any intersection point away from these three gives rise to a non-abelian $\rho$, as desired.
\end{proof}

From now on we will repeatedly use the following hypotheses.

\begin{setup} \label{setup:toroidal}
Let $K_1 \subset Y_1$ and $K_2 \subset Y_2$ be knots in homology spheres, with the properties that each exterior $M_j = Y_j \setminus N(K_j)$ is irreducible and boundary-incompressible, and that each $4$-surgery
\[ (Y_j)_4(K_j), \qquad j=1,2 \]
is a lens space.  Form a closed 3-manifold $Y$ by gluing $M_1$ to $M_2$ by the map
\begin{align*}
\mu_1 &\sim \mu_2, \\
\lambda_1^{-1} &\sim \mu_2^4\lambda_2
\end{align*}
as in \eqref{eq:4-gluing}.  Finally, let $\gamma_j \subset i^*X(Y_j,K_j)$ be the embedded closed curves in the pillowcase provided by Lemma~\ref{lem:4-gamma}, each of which avoids the lines $\{ 4\alpha+\beta\in \pi\Z \}$ except at $(\frac{\pi}{4},0)$ and possibly $(\frac{\pi}{2},0)$.
\end{setup}

\begin{lemma} \label{lem:want-gamma-intersect}
Assume Setup~\ref{setup:toroidal}, and let
\[ c_j \subset i^*X(Y_j,K_j), \qquad j=1,2 \]
be closed, embedded curves that avoid the points $(0,0)$ and $(\pi,0)$, such as $\gamma_j$ or $\tau(\gamma_j)$.  If the intersection
\[ c_1 \cap \sigma(c_2) \]
is nonempty, then there is a non-abelian representation $\rho: \pi_1(Y) \to \SU(2)$.
\end{lemma}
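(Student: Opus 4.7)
The plan is to reduce to Lemma~\ref{lem:4-intersection}: if we can exhibit a point $p \in c_1 \cap \sigma(c_2)$ different from each of the three ``exceptional'' points $(0,0)$, $(\frac{\pi}{2},0)$, $(\pi,0)$, then that lemma produces the desired non-abelian representation.  A direct computation from \eqref{eq:sigma}, using the pillowcase identifications, shows that $\sigma$ fixes each of these three points, so $\sigma(c_2)$ avoids $(0,0)$ and $(\pi,0)$ just as $c_2$ does.  Since $c_1$ also avoids $(0,0)$ and $(\pi,0)$, every point of $c_1 \cap \sigma(c_2)$ is automatically different from these two; in particular, if any point of the intersection happens to be different from $(\frac{\pi}{2},0)$, we are immediately done.

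The remaining case, where $c_1 \cap \sigma(c_2) = \{(\frac{\pi}{2},0)\}$, is the only point requiring real work, and I would rule it out by combining transversality at $(\frac{\pi}{2},0)$ with the topology of $X(T^2) \cong S^2$.  Applying Proposition~\ref{prop:zero-root-of-unity} to each knot $K_j$ (with $r=4$, $s=1$), we obtain an open neighborhood $U$ of $(\frac{\pi}{2},0)$ whose intersection with each $i^*X(Y_j,K_j)$ is contained in the line $\{\beta \equiv 0 \pmod{2\pi}\}$.  Since $c_1 \subset i^*X(Y_1,K_1)$ passes through $(\frac{\pi}{2},0)$, inside $U$ it is a smooth arc in $\{\beta = 0\}$; likewise $c_2$ is a smooth arc in $\{\beta=0\}$ near $(\frac{\pi}{2},0)$, so $\sigma(c_2)$ is a smooth arc in $\sigma(\{\beta=0\}) = \{(\alpha, 2\pi - 4\alpha)\}$, i.e.\ in the line $\{4\alpha+\beta = 2\pi\}$.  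The two lines have tangent directions $(1,0)$ and $(1,-4)$ at $(\frac{\pi}{2},0)$, which are linearly independent, so $c_1$ and $\sigma(c_2)$ cross transversely there.

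Finally, $c_1$ and $\sigma(c_2)$ are simple closed curves in $X(T^2) \cong S^2$ and therefore null-homologous, so their mod-$2$ intersection pairing vanishes.  If $(\frac{\pi}{2},0)$ were their only intersection point, the transversality just established would force the mod-$2$ count to equal $1$, a contradiction.  We conclude that there is a further intersection point $p' \in c_1 \cap \sigma(c_2) \setminus \{(\frac{\pi}{2},0)\}$; this $p'$ is different from all three exceptional points, so Lemma~\ref{lem:4-intersection} applied at $p'$ produces a non-abelian representation $\pi_1(Y) \to \SU(2)$.  The main obstacle is the transversality step: the whole argument rests on Proposition~\ref{prop:zero-root-of-unity} to pin both curves to the single smooth curve $\{\beta=0\}$ locally near the exceptional point, after which $\sigma$ rotates one of them to a line of different slope and the topological parity of intersection on $S^2$ closes the case.
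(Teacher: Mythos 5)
Your proposal is correct and follows essentially the same route as the paper: reduce to Lemma~\ref{lem:4-intersection}, note that only $(\frac{\pi}{2},0)$ needs special treatment, use Proposition~\ref{prop:zero-root-of-unity} to pin $c_1$ to $\{\beta\equiv 0\}$ and $\sigma(c_2)$ to $L_0$ near that point so the crossing is transverse, and conclude from the vanishing of the intersection number on $X(T^2)\cong S^2$ that a second intersection point must exist.
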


\begin{proof}
By Lemma~\ref{lem:4-intersection} it suffices to show that $c_1$ and $\sigma(c_2)$ intersect in a point other than $(0,0)$, $(\frac{\pi}{2},0)$, or $(\pi,0)$.  By hypothesis they avoid the first and last of these, so we need only consider the case where $c_1$ and $\sigma(c_2)$ both pass through $(\frac{\pi}{2},0)$.  Since $(\frac{\pi}{2},0)$ is fixed by $\sigma$, this means that both of the $c_i$ contain $(\frac{\pi}{2},0)$.

According to  Proposition~\ref{prop:zero-root-of-unity}, there is a neighborhood of $(\frac{\pi}{2},0)$ in the pillowcase on which each $i^*X(Y_j,K_j)$ coincides with the line $\beta\equiv 0\pmod{2\pi}$.  This means that on a sufficiently small neighborhood $U$ of $(\frac{\pi}{2},0)$, we have
\begin{align*}
c_1 \cap U &= U \cap \{\beta\equiv 0\!\!\!\pmod{2\pi}\} \\
\sigma(c_2) \cap U &= U \cap \{4\alpha+\beta \equiv 0\!\!\!\pmod{2\pi}\},
\end{align*}
and so $c_1$ meets $\sigma(c_2)$ transversely at $(\frac{\pi}{2},0)$.  In particular, the point $(\frac{\pi}{2},0)$ contributes $\pm1$ to the intersection number
\[ c_1 \cdot \sigma(c_2) = 0, \]
so there must be at least one other point of intersection $(\alpha,\beta) \in c_1 \cap \sigma(c_2)$ and this provides the desired $\rho$.
\end{proof}

In what follows, we will repeatedly refer to the pair of arcs
\begin{equation} \label{eq:L-theta}
L_\theta = \{(\alpha,\beta) \mid 4\alpha+\beta \equiv \theta\!\!\!\pmod{2\pi} \} \qquad (\theta=0,\pi)
\end{equation}
in the pillowcase.

\begin{proposition} \label{prop:both-avoid-pi/2}
Assume Setup~\ref{setup:toroidal}, and suppose further that neither $\gamma_1$ nor $\gamma_2$ contains the point $(\frac{\pi}{2},0)$.  Then there is a non-abelian representation $\rho: \pi_1(Y) \to \SU(2)$.
\end{proposition}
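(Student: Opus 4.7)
By Lemma~\ref{lem:want-gamma-intersect}, it suffices to show that the intersection $\gamma_1 \cap \sigma(\gamma_2)$ is nonempty. I would approach this using the structural information provided by Lemma~\ref{lem:4-gamma} and Proposition~\ref{prop:zero-root-of-unity}.

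Specifically, I would first record two key local facts about the curves. The curve $\gamma_1$ meets the puncture-to-puncture arc $L_\pi$ transversely at exactly the single point $(\pi/4, 0)$, and lies along the edge $\{\beta = 0\}$ in a neighborhood of this point; these follow from Lemma~\ref{lem:4-gamma} (using our assumption that $\gamma_1$ avoids $(\pi/2, 0)$) together with Proposition~\ref{prop:zero-root-of-unity}. By applying $\sigma$ to the analogous statements for $\gamma_2$, and using that $\sigma$ interchanges $\{\beta = 0\}$ with $L_0$ and $L_\pi$ with $\{\beta = \pi\}$, the curve $\sigma(\gamma_2)$ meets $\{\beta = \pi\}$ transversely at exactly $(\pi/4, \pi)$ and lies along $L_0$ in a neighborhood of that point.

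I would then argue by contradiction: if $\gamma_1$ and $\sigma(\gamma_2)$ were disjoint, then as two essential simple closed curves in the twice-punctured pillowcase $P$ they would cobound an embedded annulus $A \subset P$, and the pillowcase would split as $D^L \sqcup A \sqcup D^R$ with $D^L \ni (0, \pi)$ and $D^R \ni (\pi, \pi)$ each embedded disks. The contradiction should be deduced from the behavior of the two puncture-to-puncture arcs $L_\pi$ and $\{\beta = \pi\}$ relative to this decomposition: each runs from $D^L$ to $D^R$, so each must cross both $\gamma_1$ and $\sigma(\gamma_2)$, with the algebraic intersection of each arc against each essential curve equal to $\pm 1$. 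However the single-point transverse crossings $\gamma_1 \cap L_\pi = \{(\pi/4, 0)\}$ and $\sigma(\gamma_2) \cap \{\beta = \pi\} = \{(\pi/4, \pi)\}$ tightly restrict where the remaining crossings of $\gamma_1$ with $\{\beta = \pi\}$ and of $\sigma(\gamma_2)$ with $L_\pi$ can occur, and combined with the location of the punctures in $D^L$ and $D^R$ this should force an impossibility.

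The main obstacle is carrying out this topological bookkeeping precisely. The orbifold structure of the pillowcase, the fact that $L_\pi$ and $\{\beta = \pi\}$ themselves intersect at $(\pi/2, \pi)$ (a point that lies on neither $\gamma_1$ nor $\sigma(\gamma_2)$, since both avoid the set $\{4\alpha + \beta \in \pi\Z\}$ apart from their prescribed marked points), and the possibility that $\gamma_1$ and $\sigma(\gamma_2)$ may weave complicated paths through $A$, $D^L$, and $D^R$, all demand careful case analysis. Once a contradiction is reached the conclusion follows immediately from Lemma~\ref{lem:want-gamma-intersect}, yielding the desired non-abelian representation $\rho : \pi_1(Y) \to \SU(2)$.
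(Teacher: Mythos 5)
There is a genuine gap, and it starts with your very first reduction. You aim to prove that $\gamma_1 \cap \sigma(\gamma_2) \neq \emptyset$, but this is strictly stronger than what is needed and than what the available constraints support: Lemma~\ref{lem:want-gamma-intersect} applies equally well to $\tau(\gamma_1) \subset i^*X(Y_1,K_1)$ (via Lemma~\ref{lem:tau-involution}), and the paper only proves the weaker statement that $\sigma(\gamma_2)$ must meet $\gamma_1 \cup \tau(\gamma_1)$. Indeed, in the paper's analysis the curves $\gamma_1$ and $\tau(\gamma_1)$ cut the pillowcase into components $X_\ell, X_m, X_r$ containing the three pieces of $L_\pi$, the point $(\frac{\pi}{4},\pi) \in \sigma(\gamma_2)$ is forced into the middle component $X_m$, and $\sigma(\gamma_2)$ is then forced to escape into $X_\ell$ \emph{or} $X_r$; if it only escapes into $X_r$ it crosses $\tau(\gamma_1)$ but need not cross $\gamma_1$ at all. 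Nothing in your list of local facts rules out $\gamma_1$ and $\sigma(\gamma_2)$ being disjoint parallel essential curves in the annulus $P$, so the contradiction you are hoping for is not there to be found along this route.

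The second half of the proposal is also not an argument yet: you correctly note that each of $L_\pi$ and $\{\beta=\pi\}$ must cross each essential curve with algebraic intersection $\pm 1$, but two disjoint essential curves each crossing each of two puncture-to-puncture arcs an odd number of times is a perfectly consistent configuration, and you have no control at all over $\gamma_1 \cap \{\beta \equiv \pi\}$ or over $\sigma(\gamma_2) \cap L_\pi$ away from the horizontal lines. The leverage you are missing is (i) that $\sigma(\gamma_2)$ avoids the \emph{entire} line $\{\beta \equiv 0 \pmod{2\pi}\}$ (the image under $\sigma$ of the fact that $\gamma_2 \cap L_0 = \emptyset$), which lets one build the composite puncture-to-puncture arc $L_\pi^\ell \cup ([\frac{\pi}{4},\frac{3\pi}{4}]\times\{0\}) \cup L_\pi^r$ that $\sigma(\gamma_2)$ must cross but can only cross in $L_\pi^\ell \cup L_\pi^r$; and (ii) the $\tau$-equivariance of $\gamma_1 \cup \tau(\gamma_1)$ and of $L_0$, which is what pins $L_0$ (and hence $(\frac{\pi}{4},\pi)$) into the middle complementary region. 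With those two inputs the contradiction is that the connected set $\sigma(\gamma_2)$ meets both $X_m$ and $X_\ell \cup X_r$ while avoiding $\gamma_1 \cup \tau(\gamma_1)$. I would recommend reworking the argument with $\gamma_1 \cup \tau(\gamma_1)$ in place of $\gamma_1$ from the outset.
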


\begin{proof}
We recall from Lemma~\ref{lem:tau-involution} that $\tau(\gamma_1) \subset i^*X(Y_1,K_1)$; moreover, the points $(0,0)$ and $(\pi,0)$ in the pillowcase are fixed by $\tau$, so if $\gamma_1$ avoids them both then so does $\tau(\gamma_1)$.  Thus by Lemma~\ref{lem:want-gamma-intersect}, it suffices to show that either $\gamma_1$ or $\tau(\gamma_1)$ intersects $\sigma(\gamma_2)$.  We will therefore suppose for the sake of a contradiction that
\[ \big( \gamma_1 \cup \tau(\gamma_1) \big) \cap \sigma(\gamma_2) = \emptyset. \]

We first record that according to Setup~\ref{setup:toroidal}, the curves $\gamma_1$ and $\gamma_2$ can only possibly meet the line $L_0$ at $(\frac{\pi}{2},0)$.  By assumption they both avoid this point, so in fact
\[ \gamma_1 \cap L_0 = \gamma_2 \cap L_0 = \emptyset. \]
Applying $\tau$ to $\gamma_1 \cap L_0$ and observing that $\tau(L_0) = L_0$ tells us that
\[ \tau(\gamma_1) \cap L_0 = \emptyset \]
as well, while if we apply $\sigma$ to $\gamma_2 \cap L_0$ then we see that
\[ \sigma(\gamma_2) \cap \{ \beta\equiv0 \!\!\! \pmod{2\pi} \} = \emptyset \]
since $\sigma$ sends $L_0$ to the line $\beta\equiv 0$.

By Proposition~\ref{prop:zero-root-of-unity}, the curve $\gamma_1$ intersects $L_\pi$ transversely at their sole point $(\frac{\pi}{4},0)$ of intersection, and similarly $\tau(\gamma_1)$ meets $L_\pi$ transversely at $(\frac{3\pi}{4},0)$ and nowhere else.  Thus $\gamma_1$ and $\tau(\gamma_1)$ separate $L_\pi$ into three segments, which we label
\begin{align*}
L_\pi^\ell &= \{(\alpha,\pi - 4\alpha) \mid 0 \leq \alpha < \tfrac{\pi}{4} \}, \\
L_\pi^m &= \{(\alpha,3\pi - 4\alpha) \mid \tfrac{\pi}{4} < \alpha < \tfrac{3\pi}{4} \}, \\
L_\pi^r &= \{(\alpha,5\pi - 4\alpha) \mid \tfrac{3\pi}{4} < \alpha \leq \pi \}.
\end{align*}
The union $\gamma_1 \cup  \tau(\gamma_1)$ splits the pillowcase into various path components, and we let
\[ X_\ell,\ X_m,\ X_r \subset X(T^2) \setminus \big( \gamma_1 \cup \tau(\gamma_1) \big) \]
denote the path components containing $L_\pi^\ell$, $L_\pi^m$, and $L_\pi^r$ respectively.  Since $L_\pi^\ell$ lies in a different component of $X(T^2) \setminus \gamma_1$ than $L_\pi^m$ and $L_\pi^r$, and similarly $L_\pi^r$ lies in a different component of $X(T^2) \setminus \tau(\gamma_1)$ than $L_\pi^\ell$ and $L_\pi^m$, it follows that the path components $X_\ell$, $X_m$, and $X_r$ are all distinct.

Next, we observe that the curves $\sigma(\gamma_2)$ and $L_0$ are by assumption disjoint from $\gamma_1 \cup \tau(\gamma_1)$, so each one lies entirely within some path component of the complement.  Since $\sigma$ restricts to an involution of the twice-punctured pillowcase, the curve $\sigma(\gamma_2)$ generates the homology of the latter just as $\gamma_2$ does; meanwhile $L_\pi$ is an arc with endpoints at the two punctures $(0,\pi)$ and $(\pi,\pi)$, so we must have
\[ \sigma(\gamma_2) \cap L_\pi \neq \emptyset. \]
At the same time $\sigma(\gamma_2)$ also contains the point $\sigma(\frac{\pi}{4},0) = (\frac{\pi}{4}, \pi) \in L_0$, so it contains a path with one endpoint on $L_0$ and the other endpoint on $L_\pi$.  The endpoint of this path on $L_\pi$ lies in one of the segments $L_\pi^\ell$, $L_\pi^m$, or $L_\pi^r$, since the remaining points $(\frac{\pi}{4},0)$ and $(\frac{3\pi}{4},0)$ of $L_\pi$ lie on $\gamma_1$ and $\tau(\gamma_1)$ respectively, so $L_0$ is in the same path component as that segment.  Thus exactly one of
\[ L_0 \subset X_\ell \quad\text{or}\quad L_0 \subset X_m \quad\text{or}\quad L_0 \subset X_r \]
must be true.

Since the set $\gamma_1 \cup \tau(\gamma_1)$ is $\tau$-invariant, the involution $\tau$ permutes the path components of their complement, and it moreover fixes the path component containing $L_0$ since $\tau(L_0) = L_0$.  We observe $\tau$ exchanges the points
\[ (0,\pi) \in L_\pi^\ell \subset X_\ell \qquad\text{and}\qquad (\pi,\pi) \in L_\pi^r \subset X_r, \]
but it fixes the point $(\frac{\pi}{2},\pi) \in L_\pi^m$, so $\tau$ exchanges $X_\ell$ and $X_r$ while fixing $X_m$ and therefore
\[ L_0 \subset X_m. \]

Finally, we build another path from $(0,\pi)$ to $(\pi,\pi)$ as the union
\[ L_\pi^\ell \cup \big( [\tfrac{\pi}{4},\tfrac{3\pi}{4}] \times \{0\} \big) \cup L_\pi^r. \]
Since $\sigma(\gamma_2)$ is homologically essential in the twice-punctured pillowcase, it must intersect this path somewhere.  But we saw that $\sigma(\gamma_2)$ is disjoint from the middle segment, since in fact it avoids the entire line $\{ \beta\equiv 0 \pmod{2\pi} \}$, so it follows that either
\[ \sigma(\gamma_2) \cap L_\pi^\ell \neq \emptyset \quad\text{or}\quad \sigma(\gamma_2) \cap L_\pi^r \neq \emptyset. \]
This means that $\sigma(\gamma_2)$ intersects at least one of the path components $X_\ell$ and $X_r$, and at the same time we have also seen that it contains the point
\[ (\tfrac{\pi}{4},\pi) \in L_0 \subset X_m. \]
But then the curve $\sigma(\gamma_2)$ contains a path from $X_m$ to either $X_\ell$ or $X_r$, contradicting the fact that each of these path components are distinct.  We conclude that $\sigma(\gamma_2)$ must intersect either $\gamma_1$ or $\tau(\gamma_1)$ after all, and this provides the desired representation $\rho$.
\end{proof}

\subsection{Instanton knot homology and $\SU(2)$ representations} \label{ssec:khi-representations}

In this subsection we suppose that $Y = M_1 \cup_{T^2} M_2$ is formed as in Setup~\ref{setup:toroidal}, and that $Y$ is $\SU(2)$-abelian.  We will use the curve of characters $\sigma(\gamma_2)$ for $K_2$ in the pillowcase, and specifically the fact that it mostly avoids the pillowcase image $i^*X(Y_1,K_1)$, to conclude that the \emph{instanton knot homology}
\[ \KHI(Y_1,K_1) \]
defined by Kronheimer and Mrowka \cite{km-excision} must be small. The following lower bound on the rank of $\KHI$ will then give us a contradiction, from which we can conclude that $Y$ must not be $\SU(2)$-abelian after all.

\begin{lemma} \label{lem:KHI-rank}
Let $K \subset Y$ be a knot in a homology sphere with irreducible, boundary-incompressible complement.  Then $\dim \KHI(Y,K) \geq 2$.
\end{lemma}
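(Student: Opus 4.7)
The plan is to exploit the Alexander-type grading on $\KHI(Y,K)$ coming from a Seifert surface for $K$, together with Kronheimer--Mrowka's genus detection theorem and the standard Alexander-grading symmetry.

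First, I would observe that the Seifert genus satisfies $g(K) \geq 1$. Since $Y$ is an integer homology sphere, $K$ bounds a Seifert surface and $g(K)$ is well-defined. If $g(K) = 0$, then $K$ would bound an embedded disk $D \subset Y$. Taking $D$ to meet a tubular neighborhood of $K$ in a meridional disk, the complementary part of $D$ lying in the exterior $M = Y \setminus N(K)$ would be a properly embedded disk with boundary equal to the longitude of $K$. Since the longitude is a primitive, nonseparating curve in $\partial M \cong T^2$, this would be an essential compressing disk, contradicting the hypothesis that $\partial M$ is incompressible in $M$. Hence $g(K) \geq 1$.

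Second, I would invoke the decomposition of instanton knot homology along an Alexander grading associated with a minimal-genus Seifert surface $\Sigma \subset Y$:
\[
\KHI(Y,K) \;=\; \bigoplus_{i \in \mathbb{Z}} \KHI(Y,K,i).
\]
By the Kronheimer--Mrowka genus detection theorem (proved in \cite{km-excision} for $S^3$ and established for knots in integer homology spheres via the standard sutured-manifold decomposition machinery), the top nonzero summand occurs in Alexander grading $i = g(K)$. In particular $\KHI(Y,K,g(K)) \neq 0$. The orientation-reversing symmetry of $\Sigma$ induces an isomorphism $\KHI(Y,K,i) \cong \KHI(Y,K,-i)$ (the analogue of the familiar symmetry of sutured Floer homology), so $\KHI(Y,K,-g(K)) \neq 0$ as well.

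Since $g(K) \geq 1$, the gradings $+g(K)$ and $-g(K)$ are distinct summands in the decomposition of $\KHI(Y,K)$, giving
\[
\dim \KHI(Y,K) \;\geq\; \dim \KHI(Y,K,g(K)) + \dim \KHI(Y,K,-g(K)) \;\geq\; 2,
\]
as required. The main obstacle is citing the correct forms of genus detection and the Alexander-grading symmetry for knots in arbitrary integer homology spheres rather than only in $S^3$; both results extend the foundational work of \cite{km-excision} via standard sutured-manifold arguments in instanton Floer theory.
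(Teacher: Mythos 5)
Your argument is correct in outline, but it takes a genuinely different route from the paper. The paper's proof never touches the Alexander grading: it identifies $\KHI(Y,K)$ with $\SHI(Y(K))$, notes that irreducibility of the complement makes $Y(K)$ taut and hence $\SHI(Y(K)) \neq 0$ by \cite[Theorem~7.12]{km-excision}, and then invokes the theorem of Ghosh--Li \cite[Theorem~1.2]{gl-decomposition} that a balanced sutured manifold with $H_2 = 0$ and $\dim\SHI < 2$ must be a product sutured manifold; boundary-incompressibility rules out the product case (which would force $M$ to be a solid torus). Your route instead observes $g(K) \geq 1$ (your compressing-disk argument here is fine and uses the same hypothesis in essentially the same way) and then extracts two nonzero summands from the Alexander decomposition via genus detection and the $i \leftrightarrow -i$ symmetry. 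Both work, and the trade-off is real: the paper's approach needs only tautness of $Y(K)$ itself plus the (strong, but off-the-shelf) Ghosh--Li classification, whereas yours needs tautness of the sutured manifold obtained by decomposing along a minimal-genus Seifert surface, the decomposition theorem of \cite{km-excision}, and the symmetry of the Alexander grading, all in the setting of knots in homology spheres rather than $S^3$. You correctly flag that these extensions are the main thing to nail down; for the symmetry in particular, the cleanest self-contained justification is to decompose $Y(K)$ along $-\Sigma$ as well as $\Sigma$, since the resulting sutured manifold is again taut and its $\SHI$ is the summand in grading $-g$, which avoids appealing to a general symmetry statement. With those citations made precise, your proof stands as a valid, more hands-on alternative to the paper's.
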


\begin{proof}
Kronheimer and Mrowka \cite{km-excision} define instanton knot homology as the sutured instanton homology
\[ \KHI(Y,K) = \SHI(Y(K)), \]
where the sutured manifold $Y(K)$ is the complement $M = Y\setminus N(K)$ with a pair of oppositely oriented meridional sutures.  Since $M = Y \setminus N(K)$ is irreducible, the sutured manifold $Y(K)$ is taut, and in particular its sutured instanton homology is nonzero \cite[Theorem~7.12]{km-excision}.

Now we know that $H_2(M)=0$ and that $\SHI(Y(K))$ is nonzero, so a theorem of Ghosh and Li \cite[Theorem~1.2]{gl-decomposition} tells us that
\[ \dim \SHI(Y(K)) < 2 \]
if and only if $Y(K)$ is a product sutured manifold, in which case
\[ Y(K) \cong (\Sigma \times [-1,1], \partial\Sigma \times \{0\}) \]
for some compact surface $\Sigma$ with boundary.  (This theorem is in turn a generalization of \cite[Theorem~7.18]{km-excision}, which required the additional hypothesis that $Y(K)$ is a homology product.)  But since the positive and negative regions $R_\pm \subset \partial M$ are annuli, this could only be possible if $\Sigma$ were an annulus, in which case $M \cong \Sigma \times [-1,1]$ would be a solid torus.  Since $\partial M$ is incompressible, we conclude that $\dim \KHI(Y,K) = \dim \SHI(Y(K))$ is at least 2 after all.
\end{proof}

Our goal will be to use $\sigma(\gamma_2)$ to construct a curve $\bar{c}' \subset X(T^2)$ and isotopy $h_t$ satisfying the hypotheses of the following theorem, and thus bound $\dim \KHI(Y_1,K_1)$ from above.

\begin{theorem}[{\cite[Theorem~4.8]{sz-pillowcase}}] \label{thm:khi-bound}
Let $K \subset Y$ be a knot in a homology 3-sphere, and suppose we have a smooth, simple closed curve $\bar{c}' \subset X(T^2)$ and an area-preserving isotopy
\[ h_t: X(T^2) \to X(T^2), \qquad 0 \leq t \leq 1 \]
that takes $\bar{c}'$ to the line $\{\alpha = \frac{\pi}{2}\}$ and fixes the four points $(0,0)$, $(0,\pi)$, $(\pi,0)$, and $(\pi,\pi)$.  Suppose moreover that
\begin{enumerate}
\item $\bar{c}'$ is disjoint from $i^*X^\irr(Y,K)$,
\item and $\bar{c}'$ intersects the line
\[ \{ \beta \equiv 0 \!\!\! \pmod{2\pi} \} \subset X(T^2) \]
transversely in $n$ points $(\alpha_1,0),\dots,(\alpha_n,0)$, with $\Delta_K(e^{2i\alpha_j}) \neq 0$ for all $j$.
\end{enumerate}
Then $\dim \KHI(K) \leq n$.
\end{theorem}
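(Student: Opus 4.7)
The plan is to prove Theorem~\ref{thm:khi-bound} by following the holonomy perturbation strategy of Herald--Kirk and Zentner, translating the Morse theory on the space of $\SU(2)$ connections over the knot exterior $E_K = Y \setminus N(K)$ into an intersection count in the pillowcase $X(T^2)$. The starting point is that $\KHI(K) = \SHI(Y(K))$ admits a chain complex whose generators are (perturbed) critical points of a Chern--Simons-type functional on $\SU(2)$ connections on a closure of $E_K$, and whose restrictions to $\partial E_K$ are controlled by holonomy perturbations supported in a collar $T^2 \times [0,1] \subset E_K$.

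First I would use the area-preserving isotopy $h_t$ to build a concrete holonomy perturbation $\pi$ supported near $\partial E_K$. The idea is that a family of small holonomy functions along suitable embedded circles in $T^2 \times [0,1]$, whose coefficients are chosen to represent the Hamiltonian that generates $h_t$ via the symplectic form on the pillowcase, can be arranged so that the perturbed flat equation on $E_K$ is equivalent to the following condition on representations: $\rho \in R(E_K)$ contributes to the critical set of the perturbed functional precisely when its boundary character $i^*[\rho]$ lies on $\bar{c}'$, together with a transversality hypothesis. The area-preserving nature of $h_t$ is what allows the perturbation to be exact (i.e.\ arise from a perturbation of the Chern--Simons functional rather than just an arbitrary vector field), and hence to fit the $\SHI$ framework.

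Second, I would analyze the perturbed critical set. Since by hypothesis $\bar{c}'$ is disjoint from $i^*X^\irr(Y,K)$, the only critical points are the reducible ones lying on $\bar{c}' \cap \{\beta\equiv 0\}$, which consist of exactly $n$ abelian characters $(\alpha_j,0)$. The remaining issue is non-degeneracy for these reducible characters: the standard computation (going back to Fintushel--Stern and Herald) shows that a reducible flat connection with $\rho(\mu)=e^{i\alpha_j}$ corresponds to a Morse--Bott critical orbit whose nontrivial twisted cohomology is controlled by $H^1(E_K; \C_{e^{2i\alpha_j}})$, which vanishes precisely when $\Delta_K(e^{2i\alpha_j}) \neq 0$. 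Combined with a further small perturbation (using that $\bar{c}'$ crosses $\{\beta\equiv 0\}$ transversely), each such orbit contributes a single generator.

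Third, I would conclude by a standard rank inequality for Floer-type theories: the dimension of $\KHI(K)$ is bounded above by the total number of perturbed generators, which is at most $n$. The main obstacle I expect is the first step, namely rigorously matching the perturbed flat equation to the pillowcase curve $\bar{c}'$: one needs to control the behavior of holonomy perturbations in a collar neighborhood of the torus boundary, show that the relevant character variety is swept out under the isotopy $h_t$, and verify that this matches the sutured setup for $Y(K)$. The reducible non-degeneracy step is essentially classical once the Alexander polynomial criterion is in place, and the area-preservation hypothesis on $h_t$ is precisely what enables one to carry out the construction within the Chern--Simons framework rather than merely at the level of character varieties.
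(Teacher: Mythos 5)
This statement is not proved in the paper at all: it is quoted from \cite[Theorem~4.8]{sz-pillowcase}, and the only original content here is the remark that the proof there, written for $Y \cong S^3$, applies verbatim to an arbitrary homology sphere. So there is no in-paper argument to compare yours against. That said, your outline does track the strategy of the cited reference: $\KHI(Y,K) = \SHI(Y(K))$ is computed from a closure whose critical points are constrained to the line $\{\alpha = \tfrac{\pi}{2}\}$ in the pillowcase, holonomy perturbations supported in a collar of $\partial E_K$ realize (compositions of shearing maps approximating) the area-preserving isotopy $h_t$ and thereby move this constraint onto $\bar{c}'$, the hypothesis $\bar{c}' \cap i^*X^\irr(Y,K) = \emptyset$ kills all irreducible generators, the transversality and $\Delta_K(e^{2i\alpha_j}) \neq 0$ conditions make the $n$ reducible generators nondegenerate, and the rank of homology is bounded by the number of generators.

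As a proof, however, what you have written is a plan rather than an argument: every load-bearing step is asserted and deferred. In particular, (i) the claim that an arbitrary area-preserving isotopy of the pillowcase fixing the corners can be realized (or $C^1$-approximated) by exact holonomy perturbations in $T^2 \times [0,1]$ is precisely the technical heart of \cite[\S4]{sz-pillowcase} and needs the shearing-map machinery of Herald--Kirk, not just the observation that area-preservation is ``what allows the perturbation to be exact''; (ii) the identification of the perturbed critical set in the \emph{closure} of the sutured manifold $Y(K)$ with characters of $\pi_1(E_K)$ landing on $\bar{c}'$ requires matching the sutured closure construction with the boundary-value analysis, which you flag but do not carry out; and (iii) the Morse--Bott/nondegeneracy analysis at the reducibles must be done for the perturbed equations, not just the unperturbed flat ones. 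None of these is wrong, but none is established, so the proposal should be regarded as a correct roadmap to the external proof rather than a proof.
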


\begin{remark}
The statement of Theorem~\ref{thm:khi-bound} in \cite{sz-pillowcase} assumes that $Y \cong S^3$, but the proof applies verbatim when $Y$ is an arbitrary homology sphere.
\end{remark}

With these prerequisites at hand, we now devote the remainder of this subsection to the proof of the following proposition.

\begin{proposition} \label{prop:rho-from-KHI}
Assume Setup~\ref{setup:toroidal}, and suppose that $(\frac{\pi}{2},0)$ lies on at least one of the curves $\gamma_1$ and $\gamma_2$.  Then there is a representation $\rho: \pi_1(Y) \to \SU(2)$ with non-abelian image.
\end{proposition}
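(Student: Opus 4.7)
The plan is to argue by contradiction. I assume $Y$ admits no non-abelian $\SU(2)$ representation and, after possibly swapping the roles of $M_1$ and $M_2$, that $(\frac{\pi}{2},0)\in\gamma_2$. The strategy is to apply Theorem~\ref{thm:khi-bound} to $K_1\subset Y_1$ with a curve $\bar c'$ built from $\sigma(\gamma_2)$, and thereby deduce $\dim\KHI(Y_1,K_1)\leq 1$, contradicting the lower bound $\dim\KHI(Y_1,K_1)\geq 2$ of Lemma~\ref{lem:KHI-rank}.

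First I would verify that $\sigma(\gamma_2)$ already has the right qualitative features. Under the contradictory assumption, the contrapositive of Lemma~\ref{lem:4-intersection} forces $i^*X(Y_1,K_1)\cap\sigma(i^*X(Y_2,K_2))\subseteq\{(0,0),(\frac{\pi}{2},0),(\pi,0)\}$, and Proposition~\ref{prop:zero-root-of-unity} bounds $i^*X^\irr(Y_1,K_1)$ away from all three of these reducible points, so $\sigma(\gamma_2)$ is disjoint from $i^*X^\irr(Y_1,K_1)$. Since $\gamma_2\cap L_0=\{(\frac{\pi}{2},0)\}$ by Lemma~\ref{lem:4-gamma} together with our hypothesis, applying the involution $\sigma$ (which swaps the lines $\{\beta\equiv 0\}$ and $L_0$) shows that $\sigma(\gamma_2)\cap\{\beta\equiv 0\pmod{2\pi}\}=\{(\frac{\pi}{2},0)\}$, with the intersection transverse because Proposition~\ref{prop:zero-root-of-unity} applied to $K_2$ makes $\sigma(\gamma_2)$ locally coincide with $L_0$ near that point. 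The cyclic-cover argument in the proof of Proposition~\ref{prop:zero-root-of-unity}, combined with Burde--Zieschang, implies moreover that $\Delta_{K_1}(-1)\neq 0$.

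The main and most delicate step is to upgrade $\sigma(\gamma_2)$ to a smooth simple closed curve $\bar c'\subset X(T^2)$ that is area-preservingly isotopic to the line $\{\alpha=\frac{\pi}{2}\}$ while fixing the four orbifold points. This amounts to smoothing $\sigma(\gamma_2)$ and adjusting it so that the two complementary regions (i) contain the orbifold points partitioned as $\{(0,0),(0,\pi)\}$ versus $\{(\pi,0),(\pi,\pi)\}$ and (ii) each have area $\pi^2$. For (i): essentiality of $\gamma_2$ in the twice-punctured pillowcase already places $(0,\pi)$ and $(\pi,\pi)$ on opposite sides of $\sigma(\gamma_2)$, and the lone transverse crossing with $\{\beta=0\}$ at $(\frac{\pi}{2},0)$ places $(0,0)$ and $(\pi,0)$ on opposite sides; I expect the correct pair-grouping to hold automatically, but any discrepancy can be repaired by an arc surgery along $L_0$ or $L_\pi$, both of which remain safe since $i^*X^\irr(Y_1,K_1)$ avoids them by Lemma~\ref{lem:cyclic-surgery-su2}. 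Condition (ii), the area adjustment, is the principal obstacle: I would carry it out via a Moser-type area-redistributing isotopy supported in the open complement of $i^*X^\irr(Y_1,K_1)$, using $L_0$, $L_\pi$, and the reducible arc $\{\beta\equiv 0\}$ as safe tracks along which to push $\sigma(\gamma_2)$ without entering the forbidden locus. Once $\bar c'$ is in hand, Theorem~\ref{thm:khi-bound} delivers $\dim\KHI(Y_1,K_1)\leq 1$, contradicting Lemma~\ref{lem:KHI-rank} and producing the desired non-abelian representation $\rho:\pi_1(Y)\to\SU(2)$.
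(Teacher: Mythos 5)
Your overall strategy (contradiction, reduce to bounding $\dim\KHI(Y_1,K_1)$ via Theorem~\ref{thm:khi-bound} using a curve built from $\sigma(\gamma_2)$, contradict Lemma~\ref{lem:KHI-rank}) is exactly the paper's, and your verification that $\sigma(\gamma_2)$ avoids $i^*X^\irr(Y_1,K_1)$ and meets $\{\beta\equiv 0\}$ transversely only at $(\tfrac{\pi}{2},0)$ is essentially correct. But the step you yourself flag as ``the principal obstacle'' --- arranging that $\bar c'$ divides the pillowcase into two regions of equal area --- is a genuine gap, and your proposed repair does not work. A ``Moser-type area-redistributing isotopy supported in the complement of $i^*X^\irr(Y_1,K_1)$, using $L_0$, $L_\pi$, and $\{\beta\equiv 0\}$ as safe tracks'' cannot transfer a macroscopic amount of area: the safe tracks are one-dimensional, so finger moves along them change the enclosed area by an arbitrarily small amount, and you have no lower bound on how much two-dimensional ``safe'' region is available. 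There is no control whatsoever on the areas of the two sides of $\sigma(\gamma_2)$, so the deficit you would need to move across the curve could be large while the complement of $i^*X^\irr(Y_1,K_1)$ is an arbitrarily thin neighborhood of $\sigma(\gamma_2)\cup L_0\cup L_\pi\cup\{\beta\equiv 0\}$.

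The paper's key idea, which you are missing, is to \emph{not} use all of $\sigma(\gamma_2)$. First it disposes of the case where $\sigma(\gamma_2)$ meets $L_\pi^\ell\cup L_\pi^r$ by a separate path-connectivity argument (Lemma~\ref{lem:gamma-hits-lpi-and-pi/2}), which produces $\rho$ directly. In the remaining case, $\sigma(\gamma_2)\cap\partial D_{\mathrm{top}}$ consists of exactly the two transverse points $(\tfrac{\pi}{2},0)$ and $(\tfrac{\pi}{4},\pi)$, so one can take the sub-arc $c_0$ of $\sigma(\gamma_2)\cap D_{\mathrm{top}}$ running from $(\tfrac{\pi}{2},2\pi)$ to its first crossing $p_0$ with $L_\pi^m$, and form $\bar c=c_0\cup\ell_0\cup\tau(c_0)$ where $\ell_0\subset L_\pi^m$ joins $p_0$ to $\tau(p_0)$. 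This curve is $\tau$-invariant by construction; since $\tau$ is an (area-preserving) involution exchanging the two complementary regions of any $\tau$-invariant separating curve, the equal-area condition holds \emph{automatically}, and \cite[Lemma~4.9]{sz-pillowcase} then upgrades a $\tau$-equivariant topological isotopy to the required area-preserving one. The arc $\ell_0$ lies on $L_\pi$ and $c_0\cup\tau(c_0)$ lies on $\sigma(\gamma_2)\cup\sigma(\tau(\gamma_2))$, so the whole curve still avoids $i^*X^\irr(Y_1,K_1)$ with an open margin, which is what permits the smooth $C^0$-approximation $\bar c'$. Without this $\tau$-symmetrization (or some substitute for it), your argument does not close.
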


We begin with a special case of the proposition that will simplify our subsequent application of Theorem~\ref{thm:khi-bound} in the general case.

\begin{lemma} \label{lem:gamma-hits-lpi-and-pi/2}
Assume Setup~\ref{setup:toroidal}, and suppose in addition that $(\frac{\pi}{2},0) \in \gamma_2$.  If $\sigma(\gamma_2)$ intersects either of the segments
\begin{equation} \label{eq:L_pi-left-right}
\begin{aligned}
L_\pi^\ell &= \{ (\alpha,\pi-4\alpha) \mid 0 \leq \alpha < \tfrac{\pi}{4} \} \\
L_\pi^r &= \{ (\alpha, 5\pi-4\alpha) \mid \tfrac{3\pi}{4} < \alpha \leq \pi \}
\end{aligned}
\end{equation}
of the line $L_\pi = \{4\alpha+\beta \equiv \pi\pmod{2\pi} \}$, then there is a representation $\pi_1(Y) \to \SU(2)$ with non-abelian image.
\end{lemma}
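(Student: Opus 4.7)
My approach is to argue by contradiction, using Theorem~\ref{thm:khi-bound} to bound $\dim\KHI(Y_1,K_1)\leq 1$ and contradict Lemma~\ref{lem:KHI-rank}. Suppose no non-abelian representation $\rho\colon\pi_1(Y)\to\SU(2)$ exists. Then by Lemma~\ref{lem:4-intersection} combined with Proposition~\ref{prop:zero-root-of-unity} (which keeps $i^*X^\irr(Y_1,K_1)$ away from the three exceptional abelian points), the intersection $i^*X^\irr(Y_1,K_1)\cap\sigma(i^*X(Y_2,K_2))$ is empty; in particular $\sigma(\gamma_2)$ is disjoint from $i^*X^\irr(Y_1,K_1)$. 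Because $\sigma$ and $\tau$ commute, $\tau(L_\pi^\ell)=L_\pi^r$, and $i^*X(Y_2,K_2)$ is $\tau$-invariant by Lemma~\ref{lem:tau-involution}, I may assume without loss of generality that $\sigma(\gamma_2)\cap L_\pi^\ell\neq\emptyset$.

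I would next verify that $\sigma(\gamma_2)$ is a simple closed curve in $X(T^2)$ that separates the orbifold points into the pairs $\{(0,0),(0,\pi)\}$ and $\{(\pi,0),(\pi,\pi)\}$ (just as $\{\alpha=\pi/2\}$ does) and meets the arc $\{\beta\equiv 0\pmod{2\pi}\}$ transversely at the single point $(\tfrac{\pi}{2},0)$. The first claim uses that $\gamma_2$, and hence $\sigma(\gamma_2)$, avoids the four orbifold points as well as the arcs $\{\alpha=0\}$ and $\{\alpha=\pi\}$, by Proposition~\ref{prop:cyclic-curve} and Lemma~\ref{lem:avoid-edges}; the second follows from the identity $\sigma(\gamma_2)\cap\{\beta\equiv 0\}=\sigma(\gamma_2\cap L_0)$, the containment $\gamma_2\cap L_0\subseteq\{(\tfrac{\pi}{2},0)\}$ from Lemma~\ref{lem:4-gamma}, and the hypothesis $(\tfrac{\pi}{2},0)\in\gamma_2$.

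The main step is to construct a smooth simple closed curve $\bar{c}'\subset X(T^2)$ satisfying the hypotheses of Theorem~\ref{thm:khi-bound} with $n=1$: disjoint from $i^*X^\irr(Y_1,K_1)$, area-preservingly isotopic to $\{\alpha=\pi/2\}$ via an isotopy fixing the four orbifold points, and meeting $\{\beta\equiv 0\pmod{2\pi}\}$ transversely at a single point $(\alpha_0,0)$ with $\Delta_{K_1}(e^{2i\alpha_0})\neq 0$. Starting from $\sigma(\gamma_2)$, let $D_0,D_\pi$ denote the two disks it bounds, with $D_0$ containing $(0,\pi)$. Both endpoints $(0,\pi)$ and $(\tfrac{\pi}{4},0)$ of $L_\pi^\ell$ lie in $D_0$, so the intersection $\sigma(\gamma_2)\cap L_\pi^\ell$ contains an even number of transverse points, hence at least two. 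Given two consecutive intersections $q_1,q_2$ along $L_\pi^\ell$, the open sub-arc between them lies in $D_\pi$; swapping it with the corresponding sub-arc of $\sigma(\gamma_2)$ and rounding corners at $q_1,q_2$ yields a curve that is still disjoint from $i^*X^\irr(Y_1,K_1)$ (since $L_\pi^\ell\setminus\{(\tfrac{\pi}{4},0)\}$ is disjoint from $i^*X^\irr$ by Lemma~\ref{lem:cyclic-surgery-su2}) and still separates the orbifold points in the same way. This surgery transfers some fixed area $|R|$ from $D_\pi$ to $D_0$.

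To achieve the exact area $\pi^2$ on each side, I would combine the surgery with $C^0$-small perturbations of the swapped arc inside the open complement of $i^*X^\irr(Y_1,K_1)$, thereby varying the transferred area continuously over a nontrivial range that one must verify contains the required target. A final perturbation of $\bar{c}'$ near $(\tfrac{\pi}{2},0)$, carried out inside the open neighborhood free of irreducibles provided by Proposition~\ref{prop:zero-root-of-unity}, shifts the unique intersection with $\{\beta\equiv 0\pmod{2\pi}\}$ to a nearby $(\alpha_0,0)$ avoiding the finitely many zeros of $\Delta_{K_1}$ on the unit circle. Moser's theorem then produces the required area-preserving isotopy sending $\bar{c}'$ to $\{\alpha=\pi/2\}$ fixing the four orbifold points, and Theorem~\ref{thm:khi-bound} delivers $\dim\KHI(Y_1,K_1)\leq 1$, contradicting Lemma~\ref{lem:KHI-rank}. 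The principal difficulty is the equal-area adjustment: a single surgery yields only a discrete area transfer, so one must argue by continuity, using perturbations supported inside the open region of the pillowcase missed by $i^*X^\irr(Y_1,K_1)$, that the exact half-area $\pi^2$ is indeed reachable.
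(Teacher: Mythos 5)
Your overall strategy is not the one the paper uses for this lemma, and as written it has a genuine gap at the step you yourself flag as the ``principal difficulty.''  Theorem~\ref{thm:khi-bound} requires an \emph{area-preserving} isotopy carrying $\bar{c}'$ to the line $\{\alpha=\frac{\pi}{2}\}$, so $\bar{c}'$ must split the pillowcase into two pieces of exactly equal area.  The curve $\sigma(\gamma_2)$ has no reason to do this, and your proposed fix --- transferring area by surgeries along $L_\pi^\ell$ and then ``varying the transferred area continuously'' by perturbations supported in the complement of $i^*X^{\irr}(Y_1,K_1)$ --- cannot be completed: the amount of area you can move by wiggling an arc inside a fixed open set is bounded by the area of that set, and the open complement of $i^*X^{\irr}(Y_1,K_1)$ comes with no lower bound on its area (the discrepancy to be corrected could be close to $\pi^2$).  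In the paper the equal-area condition is never obtained by such an adjustment; it is forced structurally, by building a $\tau$-\emph{invariant} curve $\bar{c}=c_0\cup\ell_0\cup\tau(c_0)$ out of a single sub-arc of $\sigma(\gamma_2)$ in $D_{\mathrm{top}}$ closed up along the segment $\ell_0\subset L_\pi^m$, so that the two complementary disks are exchanged by the area-preserving involution $\tau$.  That construction is only available when $\sigma(\gamma_2)$ \emph{misses} $L_\pi^\ell\cup L_\pi^r$, which is exactly why the $\KHI$ argument is reserved for Proposition~\ref{prop:rho-from-KHI} and why the present lemma must be handled separately.  A secondary problem: $\gamma_2$ is only a topologically embedded curve, so your parity count of ``transverse'' intersections of $\sigma(\gamma_2)$ with $L_\pi^\ell$, and the identification of consecutive sub-arcs lying in $D_\pi$, are not justified.

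The paper's proof of this particular lemma is much more elementary and uses the hypothesis positively rather than by contradiction.  If $\sigma(\gamma_2)$ meets $L_\pi^\ell$ at some $p$, then following $L_\pi^\ell$ from the puncture $(0,\pi)$ to $p$ and then $\sigma(\gamma_2)$ from $p$ to $(\frac{\pi}{2},0)$ gives a path $\phi$; concatenating $\phi$ with $\tau(\phi)$, which runs from $(\frac{\pi}{2},0)$ to the other puncture $(\pi,\pi)$ inside $L_\pi^r\cup\sigma(\tau(\gamma_2))$, produces a puncture-to-puncture path contained in $L_\pi^\ell\cup\sigma(\gamma_2)\cup\sigma(\tau(\gamma_2))\cup L_\pi^r$.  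Since $\gamma_1$ is essential in the twice-punctured pillowcase it must cross this path, and since Lemma~\ref{lem:cyclic-surgery-su2} keeps $\gamma_1$ off $L_\pi^\ell$ and $L_\pi^r$, it must meet $\sigma(\gamma_2)$ or $\sigma(\tau(\gamma_2))$; Lemma~\ref{lem:want-gamma-intersect} then yields the non-abelian representation directly, with no appeal to $\KHI$ at all.
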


\begin{proof}
By hypothesis the image $\sigma(\gamma_2)$ contains the point $\sigma(\frac{\pi}{2},0) = (\frac{\pi}{2},0)$.  Supposing for now that $\sigma(\gamma_2)$ intersects $L_\pi^\ell$, say at some point $p$, then the union
\[ L_\pi^\ell \cup \sigma(\gamma_2) \]
contains a path $\phi$ from $(0,\pi)$ to $(\frac{\pi}{2},0)$, realized by following $L_\pi^\ell$ from $(0,\pi)$ to $p$ and then traveling from $p$ to $(\frac{\pi}{2},0)$ along $\sigma(\gamma_2)$.  Now $\tau(\phi)$ is (up to reversing the direction of travel) a path from $\tau(\frac{\pi}{2},0) = (\frac{\pi}{2},0)$ to $\tau(0,\pi) = (\pi,\pi)$, and it is contained in
\[ \tau(L_\pi^\ell) \cup \tau\big(\sigma(\gamma_2)\big) = L_\pi^r \cup \sigma\big( \tau(\gamma_2) \big), \]
so we concatenate $\phi$ and $\tau(\phi)$ to get a path $\bar\phi$ inside
\begin{equation} \label{eq:lpi-union-gamma2}
L^\ell_\pi \cup \sigma(\gamma_2) \cup \sigma\big(\tau(\gamma_2)\big) \cup L^r_\pi
\end{equation}
from $(0,\pi)$ to $(\pi,\pi)$.

Since $\gamma_1$ is homologically essential in the twice-punctured pillowcase, the path $\bar\phi$ from one puncture to the other must intersect it somewhere.  Lemma~\ref{lem:cyclic-surgery-su2} guarantees that $\gamma_1$ is disjoint from both $L^\ell_\pi$ and $L^r_\pi$, so we must have either
\[ \gamma_1 \cap \sigma(\gamma_2) \neq \emptyset \quad\text{or}\quad \gamma_1 \cap \sigma\big(\tau(\gamma_2)\big) \neq \emptyset, \]
and in either case Lemma~\ref{lem:want-gamma-intersect} provides the desired representation $\pi_1(Y) \to \SU(2)$.

The case where $\sigma(\gamma_2)$ intersects $L^r_\pi$ is nearly identical, except that this leads to a path $\phi$ from $(\frac{\pi}{2},0)$ to $(\pi,\pi)$ inside $\sigma(\gamma_2) \cup L^r_\pi$.  In this case the union $\phi \cup \tau(\phi)$ still gives us a path from $(0,\pi)$ to $(\pi,\pi)$ inside \eqref{eq:lpi-union-gamma2} and we proceed exactly as before.
\end{proof}

We are looking for a non-abelian representation $\rho: \pi_1(Y) \to \SU(2)$, and if neither curve $\gamma_j$ passes through $(\frac{\pi}{2},0)$ then Proposition~\ref{prop:both-avoid-pi/2} provides such a $\rho$, so we will assume that at least one $\gamma_j$ does contain this point.  Since the roles of the two $(Y_j,K_j)$ are completely interchangeable, we will assume without loss of generality that
\[ \gamma_2 \cap L_0 = \big\{ (\tfrac{\pi}{2},0) \big\}. \]
With $L_\pi^\ell$ and $L_\pi^r$ defined as in \eqref{eq:L_pi-left-right}, we will also assume that
\begin{equation} \label{eq:gamma2-tau-lpi-ell}
\sigma(\gamma_2) \cap \left( L_\pi^\ell \cup L_\pi^r \right) = \emptyset,
\end{equation}
because otherwise Lemma~\ref{lem:gamma-hits-lpi-and-pi/2} provides a non-abelian representation $\pi_1(Y) \to \SU(2)$.

In addition to the above identification of $\gamma_2 \cap L_0$, we also know from Setup~\ref{setup:toroidal} that $\gamma_2 \cap L_\pi = \{(\frac{\pi}{4},0)\}$.  These claims about each $\gamma_2 \cap L_\theta$ are equivalent to
\begin{equation} \label{eq:sigma-gamma2-lines}
\begin{aligned}
\sigma(\gamma_2) \cap \{ (\alpha,\beta) \mid \beta\equiv 0\!\!\!\pmod{2\pi} \} &= \big\{ (\tfrac{\pi}{2},0) \big\}, \\
\sigma(\gamma_2) \cap \{ (\alpha,\beta) \mid \beta\equiv \pi\!\!\!\pmod{2\pi} \} &= \big\{ (\tfrac{\pi}{4},\pi) \big\}.
\end{aligned}
\end{equation}
Moreover, since $4$-surgery on $K_2 \subset Y_2$ is a lens space, Proposition~\ref{prop:zero-root-of-unity} says that $\gamma_2$ coincides with the line $\{\beta \equiv 0 \pmod{2\pi}\}$ on some neighborhood of $(\frac{\pi}{4},0)$ and of $(\frac{\pi}{2},0)$.  Applying $\sigma$, we see that there are likewise open neighborhoods of $(\tfrac{\pi}{4},\pi)$ and of $(\tfrac{\pi}{2},0)$ on which $\sigma(\gamma_2)$ coincides with the line $L_0 = \{ 4\alpha+\beta \equiv 0 \pmod{2\pi} \}$.  In particular, the intersections \eqref{eq:sigma-gamma2-lines} are both transverse.

The line segments $L^\ell_\pi$, $L^r_\pi$, and $\{ \beta \equiv 0, \pi \pmod{2\pi} \}$ collectively divide the pillowcase into a pair of closed disks of equal area: we write
\[ X(T^2) = D_{\mathrm{top}} \cup D_{\mathrm{bot}}, \]
where $D_{\mathrm{top}}$ and $D_{\mathrm{bot}}$ are the regions containing $(\frac{\pi}{2},\frac{3\pi}{2})$ and $(\frac{\pi}{2},\frac{\pi}{2})$ respectively, so that $\tau(D_{\mathrm{top}}) = D_{\mathrm{bot}}$.  In both sides of Figure~\ref{fig:construct-KHI-curve} we have shaded the region $D_{\mathrm{top}}$.  We note that 
\begin{equation} \label{eq:sigma-partial-dtop}
\sigma(\gamma_2) \cap \partial D_{\mathrm{top}} = \big\{ (\tfrac{\pi}{2},0),\ (\tfrac{\pi}{4},\pi) \big \}
\end{equation}
by \eqref{eq:gamma2-tau-lpi-ell} and \eqref{eq:sigma-gamma2-lines}, and that this intersection is transverse.

We now observe that by \eqref{eq:sigma-partial-dtop}, the intersection
\[ \sigma(\gamma_2) \cap D_{\mathrm{top}} \]
defines a path from $(\frac{\pi}{2},2\pi) = (\frac{\pi}{2},0)$ to $(\frac{\pi}{4},\pi)$ that avoids $\partial D_{\mathrm{top}}$ except at its endpoints.  This path must then cross the segment 
\[ L^m_\pi = \{ (\alpha,3\pi-4\alpha) \mid \tfrac{\pi}{4} < \alpha < \tfrac{3\pi}{4} \} \]
of $L_\pi$, because $L^m_\pi$ separates $(\frac{\pi}{2},2\pi)$ from $(\frac{\pi}{4},\pi)$ in the disk $D_{\mathrm{top}}$.  See the left side of Figure~\ref{fig:construct-KHI-curve}.
\begin{figure}
\begin{tikzpicture}[style=thick]
\begin{scope}
\fill[gray!15] (0,0) -- (0.75,0) -- (0,3) -- (3,3) -- (2.25,6) -- (0,6) -- cycle;
\begin{scope}[style=thin]
  \clip (0,0) rectangle (3,6);
  \foreach \i in {1,...,5} {
    \def\lcolor{\ifodd\i{red}\else{Green}\fi};
    \draw[\lcolor,densely dashed] (0,3*\i) -- ++(3,-12);
  }
  \node[Green,left, inner sep=1pt] at (1.1,1.6) {$L_0$};
\end{scope}
\begin{scope}
  \draw plot[mark=*,mark size = 0.5pt] coordinates {(0,0)(3,0)(3,6)(0,6)} -- cycle; 
  \begin{scope}[decoration={markings,mark=at position 0.6 with {\arrow[scale=1]{>}}}]
    \draw[postaction={decorate}] (3,6) -- (0,6);
    \draw[postaction={decorate}] (3,0) -- (0,0);
  \end{scope}
  \begin{scope}[decoration={markings,mark=at position 0.525 with {\arrow[scale=1]{>>}}}]
    \draw[postaction={decorate}] (0,0) -- (0,3);
    \draw[postaction={decorate}] (0,6) -- (0,3);
  \end{scope}
  \begin{scope}[decoration={markings,mark=at position 0.575 with {\arrow[scale=1]{>>>}}}]
    \draw[postaction={decorate}] (3,0) -- (3,3);
    \draw[postaction={decorate}] (3,6) -- (3,3);
  \end{scope}
  \draw[dotted] (0,3) -- (3,3);
  \draw[thin,|-|] (0,-0.4) node[below] {\small$0$} -- node[midway,inner sep=1pt,fill=white] {$\alpha$} ++(3,0) node[below] {\small$\vphantom{0}\pi$};
  \draw[thin,|-|] (0.75,-0.4) node[below] {\small$\frac{\vphantom{3}\pi}{4}$} -- node[midway,inner sep=1pt,fill=white] {$\alpha$} ++(1.5,0) node[below] {\small$\vphantom{0}\frac{3\pi}{4}$};
  \draw[thin,|-|] (-0.3,0) node[left] {\small$0$} -- node[midway,inner sep=1pt,fill=white] {$\beta$} ++(0,6) node[left] {\small$2\pi$};
\end{scope}
\begin{scope}[color=blue,style=ultra thick]
   \clip (0,0) rectangle (3,6);
   \draw (1.5,6) -- (1.6,5.6) -- ++(0.02,-0.02) ++(-0.02,0.02) plot [smooth] coordinates { (1.6,5.6) (2.4,4.8) (1.5,5.2) (1.125,4.5) (0.5,4.25) (2.5,3.75) (1.5,3.5) (0.55,3.8) (0.65,3.4) } -- (0.75,3) -- (0.85,2.6) plot[smooth] coordinates { (0.85,2.6) (2.4,1.5) (1.75,0.75) (1.4,1.25) (1.4,0.4) } -- (1.5,0);
\node[above right] at (1.6,1.85) {$\sigma(\gamma_2)$};
\end{scope}
\begin{scope}[color=red, style=ultra thick]
\clip (0,-0.1) rectangle (3,6.1);
\draw (0,3) -- (0.75,0) (2.25,6) -- (3,3);
\end{scope}
\draw[red,thin,-latex] (-0.5,0.6) node[left,inner sep=1pt]  {$L_\pi^\ell$} to[bend right=15] (0.5,0.5);
\draw[red,thin,-latex] (3.75,3.4) node[below,inner sep=1pt]  {$L_\pi^r$} to[bend right=30] (2.85,3.75);
\draw[red,thin,-latex] (-0.5,3.75) node[below left,inner sep=1pt] {$L^m_\pi$} to[bend left=15] (0.95,5);
\end{scope}
\begin{scope}[xshift=6cm]
\fill[gray!15] (0,0) -- (0.75,0) -- (0,3) -- (3,3) -- (2.25,6) -- (0,6) -- cycle;
\begin{scope}[style=thin]
  \clip (0,0) rectangle (3,6);
  \foreach \i in {1,...,5} {
    \def\lcolor{\ifodd\i{red}\else{Green}\fi};
    \draw[\lcolor,densely dashed] (0,3*\i) -- ++(3,-12);
  }
\end{scope}
\begin{scope}
  \draw plot[mark=*,mark size = 0.5pt] coordinates {(0,0)(3,0)(3,6)(0,6)} -- cycle; 
  \begin{scope}[decoration={markings,mark=at position 0.6 with {\arrow[scale=1]{>}}}]
    \draw[postaction={decorate}] (3,6) -- (0,6);
    \draw[postaction={decorate}] (3,0) -- (0,0);
  \end{scope}
  \begin{scope}[decoration={markings,mark=at position 0.525 with {\arrow[scale=1]{>>}}}]
    \draw[postaction={decorate}] (0,0) -- (0,3);
    \draw[postaction={decorate}] (0,6) -- (0,3);
  \end{scope}
  \begin{scope}[decoration={markings,mark=at position 0.575 with {\arrow[scale=1]{>>>}}}]
    \draw[postaction={decorate}] (3,0) -- (3,3);
    \draw[postaction={decorate}] (3,6) -- (3,3);
  \end{scope}
  \draw[dotted] (0,3) -- (3,3);
  \draw[thin,|-|] (0,-0.4) node[below] {\small$0$} -- node[midway,inner sep=1pt,fill=white] {$\alpha$} ++(3,0) node[below] {\small$\vphantom{0}\pi$};
  \draw[thin,|-|] (0.75,-0.4) node[below] {\small$\frac{\vphantom{3}\pi}{4}$} -- node[midway,inner sep=1pt,fill=white] {$\alpha$} ++(1.5,0) node[below] {\small$\vphantom{0}\frac{3\pi}{4}$};
  \draw[thin,|-|] (-0.3,0) node[left] {\small$0$} -- node[midway,inner sep=1pt,fill=white] {$\beta$} ++(0,6) node[left] {\small$2\pi$};
\end{scope}
\begin{scope}[color=blue,style=ultra thick]
   \draw[purple] (1.125,4.5) ++(0.02,0.02) -- ++(-0.02,-0.02) -- node[right,pos=0.7,inner sep=2pt] {$\ell_0$} (1.875,1.5) -- ++(-0.02,-0.02);
   \clip (2.25,6) -- (0.75,6) -- (1.5,3) -- (0,3) -- (0.75,0) -- (2.25,0) -- (1.5,3) -- (3,3) -- cycle;
   \foreach \x in {0,180} {
     \draw[rotate around={\x:(1.5,3)}] (1.5,6) -- (1.6,5.6) -- ++(0.02,-0.02) ++(-0.02,0.02) plot [smooth] coordinates { (1.6,5.6) (2.4,4.8) (1.5,5.2) (1.125,4.5) (0.5,4.25) };
   }
\end{scope}
\node[blue,above,inner sep=1pt] at (0.9,1.2) {$\small\tau(c_0)$};
\begin{scope}[color=red, style=thick]
\clip (0,-0.1) rectangle (3,6.1);
\draw (0,3) -- (0.75,0) (2.25,6) -- (3,3);
\end{scope}
\draw[fill=black] (1.125,4.5) circle (0.05) node[left,inner sep=2pt] {\footnotesize$p_0$};
\draw[fill=black] (1.875,1.5) circle (0.05) node[right,inner sep=2pt] {\footnotesize$\tau(p_0)$};
\node[blue,below,inner sep=1pt] at (2.3,4.8) {$c_0$};
\draw[red,thin,-latex] (-0.5,0.6) node[left,inner sep=1pt]  {$L_\pi^\ell$} to[bend right=15] (0.5,0.5);
\draw[red,thin,-latex] (3.75,3.4) node[below,inner sep=1pt]  {$L_\pi^r$} to[bend right=30] (2.85,3.75);
\draw[red,thin,-latex] (-0.5,3.75) node[below left,inner sep=1pt] {$L^m_\pi$} to[bend left=15] (0.95,5);
\end{scope}
\end{tikzpicture}
\caption{Left: the curve $\sigma(\gamma_2)$ avoids the line segments $L^\ell_\pi$ and $L^r_\pi$ and crosses $\beta \in \pi\Z$ only at $(\frac{\pi}{2},0)$ and $(\frac{\pi}{4},\pi)$, as promised in \eqref{eq:sigma-partial-dtop}. Right: we build a $\tau$-invariant, essential closed curve $\bar{c}$ in the pillowcase out of the arc $c_0$ from $(\frac{\pi}{2},2\pi)$ to $L_\pi^m$, a portion $\ell_0$ of $L^m_\pi$, and the image $\tau(c_0)$.  In both pictures we have shaded the disk $D_{\mathrm{top}}$ for clarity.}
\label{fig:construct-KHI-curve}
\end{figure}
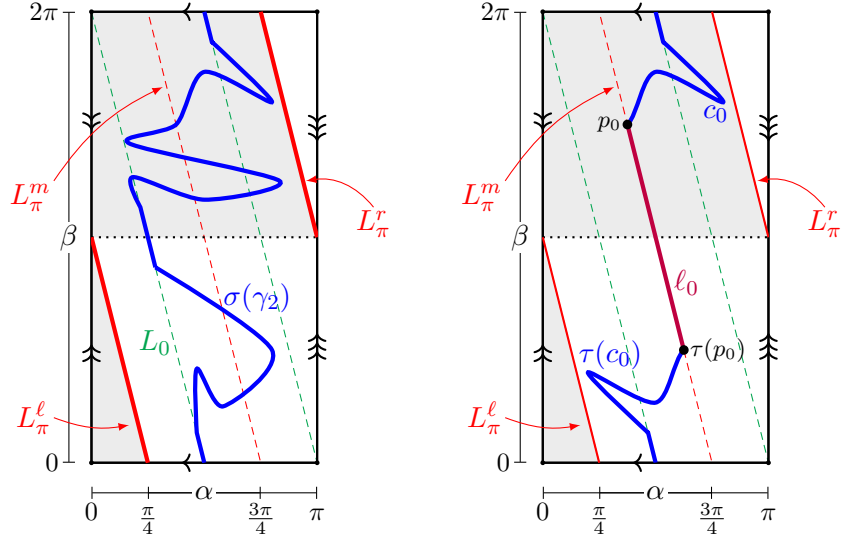

With this in mind, we let
\[ c_0 \subset \sigma(\gamma_2) \cap D_{\mathrm{top}} \]
be the portion of this path (including both endpoints) from $(\frac{\pi}{2},2\pi)$ to the first point
\[ p_0 = (\alpha_0,\beta_0) \in L^m_\pi \]
where this path meets $L^m_\pi$; we note that $\pi < \beta_0 < 2\pi$ and thus $\frac{\pi}{4} < \alpha_0 < \frac{\pi}{2}$.  We then take the line segment
\begin{equation} \label{eq:cbar-ell_0}
\ell_0 = \{ (\alpha, 3\pi-4\alpha) \mid \alpha_0 \leq \alpha \leq \pi-\alpha_0 \} \subset L^m_\pi
\end{equation}
from $p_0$ to $\tau(p_0) = (\pi-\alpha_0,2\pi-\beta_0)$, and define the simple closed curve
\[ \bar{c} = c_0 \cup \ell_0 \cup \tau(c_0) \]
in the pillowcase, as on the right side of Figure~\ref{fig:construct-KHI-curve}.  This is indeed a simple curve because by construction its three sections $c_0$, $\ell_0$, and $\tau(c_0)$ intersect each other only at their respective endpoints, which are among the points $(\frac{\pi}{2},0)$, $p_0$, and $\tau(p_0)$.  Moreover, since $\tau(\ell_0) = \ell_0$ it follows that $\tau(\bar{c}) = \bar{c}$.

\begin{lemma} \label{lem:cbar-neighborhood}
Suppose that there are no non-abelian representations $\pi_1(Y) \to \SU(2)$.  Then there is an open neighborhood $U$ of $\bar{c} \subset X(T^2)$ with the property that the intersection
\[ i^*X^\irr(Y_1,K_1) \cap U \]
is empty.
\end{lemma}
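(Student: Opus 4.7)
The plan is to argue by contradiction. Suppose that for every open neighborhood $U$ of $\bar{c}$, the intersection $i^*X^\irr(Y_1,K_1) \cap U$ is nonempty. Then one can extract a sequence $q_n \in i^*X^\irr(Y_1,K_1)$ that converges, by compactness of $X(T^2)$, to some point $q \in \bar{c}$. I would then derive a contradiction by splitting into two cases according to whether $q$ lies on the auxiliary locus $L_0 \cup L_\pi = \{4\alpha+\beta \equiv 0 \!\!\!\pmod\pi\}$.

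In the first case, suppose $q \in L_0 \cup L_\pi$. Proposition~\ref{prop:zero-root-of-unity}, applied with $(r,s)=(4,1)$ to the knot $K_1 \subset Y_1$ whose $4$-surgery is a lens space by Setup~\ref{setup:toroidal}, furnishes an open neighborhood of $L_0 \cup L_\pi$ that is disjoint from $i^*X^\irr(Y_1,K_1)$, directly contradicting $q_n \to q$.

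In the second case, suppose $q \notin L_0 \cup L_\pi$. Then $q$ must lie in the relative interior of $c_0$ or of $\tau(c_0)$, because the remaining piece $\ell_0 \subset L^m_\pi$ of $\bar{c}$ sits inside $L_\pi$, and the corner points $(\tfrac{\pi}{2},0), p_0, \tau(p_0)$ all belong to $L_0 \cup L_\pi$. By construction $c_0 \subset \sigma(\gamma_2)$, while using the commutativity $\sigma\tau = \tau\sigma$ together with Lemma~\ref{lem:tau-involution} we obtain $\tau(c_0) \subset \sigma(\tau(\gamma_2)) \subset \sigma(i^*X(Y_2,K_2))$. Either way $q \in \sigma(i^*X(Y_2,K_2))$. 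At the same time, $i^*X(Y_1,K_1)$ is closed in $X(T^2)$ as the continuous image of the compact character variety, so the limit point $q$ also belongs to $i^*X(Y_1,K_1)$. Finally, since $(0,0), (\tfrac{\pi}{2},0), (\pi,0)$ all lie on $L_0$ while $q$ does not, Lemma~\ref{lem:4-intersection} produces a non-abelian representation $\rho: \pi_1(Y) \to \SU(2)$, contradicting the standing hypothesis of this lemma.

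The main point to verify is really the bookkeeping of the second case: confirming that $\bar{c} \setminus (L_0 \cup L_\pi)$ is contained in the relative interiors of $c_0$ and $\tau(c_0)$ (which uses only the definition of $\bar{c}$ together with $\ell_0, p_0, \tau(p_0) \in L_\pi$ and $(\tfrac{\pi}{2},0) \in L_0$), and that the inclusion $\tau(c_0) \subset \sigma(i^*X(Y_2,K_2))$ holds via the $\tau$-invariance of $i^*X(Y_2,K_2)$ from Lemma~\ref{lem:tau-involution}. With those observations in hand, the argument reduces to combining the Proposition~\ref{prop:zero-root-of-unity} obstruction with Lemma~\ref{lem:4-intersection}.
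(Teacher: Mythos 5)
Your proposal is correct and rests on exactly the same two ingredients as the paper's proof: Proposition~\ref{prop:zero-root-of-unity} to handle the points of $\bar{c}$ on $L_0 \cup L_\pi$ (in particular $\ell_0$ and the three special points), and Lemma~\ref{lem:4-intersection} applied to $c_0 \cup \tau(c_0) \subset \sigma\big(i^*X(Y_2,K_2)\big)$ for the rest. The only difference is packaging—you argue by contradiction via sequential compactness, while the paper directly builds $U$ as a union of two open sets—so this is essentially the same argument.
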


\begin{proof}
Proposition~\ref{prop:zero-root-of-unity} says that there is an open neighborhood $V \subset X(T^2)$ of the lines
\[ L_0 \cup L_\pi = \{ (\alpha,\beta) \mid 4\alpha+\beta \in \pi\Z \} \]
that is disjoint from $i^*X^\irr(Y_1,K_1)$.  Moreover, since $Y$ is $\SU(2)$-abelian, Lemma~\ref{lem:4-intersection} says that the entire pillowcase image $i^*X(Y_1,K_1)$ does not meet the subset
\[ c_0 \cup \tau(c_0) \subset \sigma(\gamma_2) \cup \tau\big(\sigma(\gamma_2)\big) = \sigma(\gamma_2) \cup \sigma\big(\tau(\gamma_2)\big) \]
of the pillowcase, except possibly at some of the points
\[ (0,0),\ (\tfrac{\pi}{2},0),\ (\pi,0) \in L_0 \subset V. \]
We let $V_0 \subset V$ be an open neighborhood of these three points, and then we have 
\[ i^*X^\irr(Y_1,K_1) \subset i^*X(Y_1,K_1) \setminus V_0. \]
The set on the right is closed and disjoint from the closed set $c_0 \cup \tau(c_0)$, so it is disjoint from an entire open neighborhood $W$ of $c_0 \cup \tau(c_0)$, and then
\[ i^*X^\irr(Y_1,K_1) \cap W = \emptyset. \]
We now let $U = V \cup W$, which contains all of $\bar{c}$ since $\ell_0 \subset V$ and $c_0 \cup \tau(c_0) \subset W$, and we observe that $U$ is disjoint from $i^*X^\irr(Y_1,K_1)$ since both $V$ and $W$ are.
\end{proof}

The curve $\bar{c}$ may not be smooth in general, but there is some small $\epsilon > 0$ such that it coincides with $L_0$ and with $L_\pi$ on $2\epsilon$-neighborhoods of $(\frac{\pi}{2},0)$ and of $(\frac{\pi}{2},\pi)$, respectively.  Given the neighborhood $U$ of Lemma~\ref{lem:cbar-neighborhood}, we can thus take a $C^0$-close approximation
\begin{equation} \label{eq:bar-c-prime}
\bar{c}' \subset U \subset X(T^2) \setminus i^*X^\irr(Y_1,K_1)
\end{equation}
of $\bar{c}$ such that
\begin{enumerate}
\item $\bar{c}'$ is smooth and $\tau$-invariant,
\item and $\bar{c}'$ coincides with $\bar{c}$ in $\epsilon$-neighborhoods of $(\frac{\pi}{2},0)$ and $(\frac{\pi}{2},\pi)$, i.e.,
\begin{align*}
\bar{c}' \cap D_{\epsilon}(\tfrac{\pi}{2},0) &= L_0 \cap D_\epsilon(\tfrac{\pi}{2},0), \\
\bar{c}' \cap D_{\epsilon}(\tfrac{\pi}{2},\pi) &= L_\pi \cap D_\epsilon(\tfrac{\pi}{2},\pi).
\end{align*}
In particular $\bar{c}'$ meets $\{\beta\equiv 0\pmod{2\pi}\}$ transversely at $(\frac{\pi}{2},0)$, just as $\bar{c}$ does.
\end{enumerate}
We achieve the $\tau$-invariance by first constructing the arc $\bar{c}' \cap D_{\mathrm{top}}$ and then using $\tau$ to extend it to $D_{\mathrm{bot}}$; if the initial arc is sufficiently $C^0$-close to $\bar{c}$ then both it and its image under $\tau$ will lie in $U$.

\begin{lemma} \label{lem:bar-c-isotopy}
There is an area-preserving isotopy of the pillowcase that fixes the corners $(0,0)$, $(0,\pi)$, $(\pi,0)$, and $(\pi,\pi)$, and that takes the curve $\bar{c}'$ of \eqref{eq:bar-c-prime} to the line $\{\alpha = \frac{\pi}{2}\}$.
\end{lemma}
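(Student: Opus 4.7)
The plan is to produce the isotopy in two stages: first construct any smooth ambient isotopy of $X(T^2)$ taking $\bar{c}'$ to $\{\alpha = \tfrac{\pi}{2}\}$ and fixing the four corners, and then modify it to be area-preserving via Moser's trick.

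For the topological stage, both $\bar{c}'$ and $\{\alpha = \tfrac{\pi}{2}\}$ are smooth, simple, closed, $\tau$-invariant curves passing through the two $\tau$-fixed points $(\tfrac{\pi}{2},0)$ and $(\tfrac{\pi}{2},\pi)$. To see they lie in the same isotopy class in $X(T^2) \setminus \{\text{four corners}\}$, I would compare their homology classes after lifting to the torus double cover $T^2 \to X(T^2)$. Each curve misses the four orbifold points, so lifts to a pair of parallel circles on $T^2$ swapped by the hyperelliptic involution and sharing a common homology class. The class of $\{\alpha = \tfrac{\pi}{2}\}$ is $(0,1)$; the class $(p,q)$ of the $\bar{c}'$-lift satisfies $|q|=1$ since $\bar{c}'$ meets $\{\beta=0\}$ in exactly one transverse point, and satisfies $p=0$ provided $\bar{c}'$ is disjoint from the pillowcase edges $\{\alpha=0\}$ and $\{\alpha=\pi\}$. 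This latter disjointness holds because $\ell_0 \subset L^m_\pi$ has $\alpha \in (\tfrac{\pi}{4}, \tfrac{3\pi}{4})$ and because $c_0 \subset \sigma(\gamma_2) \cap D_{\mathrm{top}}$ and $\tau(c_0) \subset D_{\mathrm{bot}}$ can be arranged to remain in the interior of the pillowcase by choosing the $C^0$-approximation of $\bar{c}$ suitably inside the neighborhood $U$ from Lemma~\ref{lem:cbar-neighborhood}. With the two curves sharing the homology class $(0,\pm 1)$, a standard result on essential simple closed curves in the 4-punctured sphere yields a smooth ambient isotopy $H_t$ of $X(T^2)$ with $H_0=\mathrm{id}$, $H_1(\bar{c}') = \{\alpha=\tfrac{\pi}{2}\}$, fixing the four corners throughout.

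For the area stage, both curves bound complementary regions of area $\pi^2$ (half the total area $2\pi^2$ of the pillowcase). This is direct for $\{\alpha=\tfrac{\pi}{2}\}$, and for $\bar{c}'$ it follows because $\omega = d\alpha \wedge d\beta$ is $\tau$-invariant (indeed $\tau^*\omega = d(\pi-\alpha) \wedge d(-\beta) = \omega$) and because $\tau$ swaps the two complementary disks of $\bar{c}'$: at either fixed point, $\tau$ reverses the tangent direction of $\bar{c}'$, so $\tau$ restricts to an orientation-reversing involution of $\bar{c}'$, and orientation-preservation of $\tau$ on $X(T^2)$ then forces the two disks to be exchanged. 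Applying Moser's trick to the family $\omega_t = H_t^*\omega$, all in the same cohomology class with the same total integral, produces a smooth family $\Phi_t$ of diffeomorphisms with $\Phi_0 = \mathrm{id}$ and $\Phi_t^*\omega = \omega_t$, where the Moser vector field is chosen to vanish at the four corners (which is possible because the forms agree there). The composition $h_t = H_t \circ \Phi_t^{-1}$ is then the desired area-preserving isotopy.

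The main obstacle is the topological stage, specifically verifying that $\bar{c}'$ can be arranged to avoid the pillowcase edges $\{\alpha=0\}$ and $\{\alpha=\pi\}$ so that its torus lift has homology class $(0,\pm 1)$. Once this is confirmed, both the identification of the isotopy class and Moser's trick proceed along standard lines.
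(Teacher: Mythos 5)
Your strategy is sound and genuinely different from the paper's in the topological step: you classify $\bar{c}'$ up to isotopy in the four-punctured sphere by the homology class of its lift to the torus double cover, whereas the paper never invokes this classification. Instead it isotopes $\bar{c}' \cap D_{\mathrm{top}}$ explicitly, rel an $\epsilon$-neighborhood of $\partial D_{\mathrm{top}}$, to a standard arc $\bar{c}''$ meeting each horizontal line once (using that two properly embedded arcs in a disk with the same endpoints are isotopic), then straightens $\bar{c}''$ to $\{\alpha=\tfrac{\pi}{2}\}$, extending everything $\tau$-equivariantly. The payoff of the paper's route is that \emph{every} intermediate curve $\bar{c}'_t$ is $\tau$-invariant and hence bisects the area, which is exactly the hypothesis needed to apply \cite[Lemma~4.9]{sz-pillowcase} and upgrade the isotopy to one through symplectomorphisms. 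Your area computation for the two endpoints (via $\tau$ swapping the complementary disks) matches the paper's, but your Moser step is stated too loosely: with $\Phi_t^*\omega = H_t^*\omega$, the composition $H_t\circ\Phi_t^{-1}$ is area-preserving yet sends $\Phi_1(\bar{c}')$, not $\bar{c}'$, to the line. What you actually need is the relative version -- correct the isotopy first near the curve and then separately on the two complementary disks, which is possible precisely because their areas agree -- and that is the content of the cited lemma.

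The more substantive issue is the step you yourself flag as ``the main obstacle'' and then do not resolve. Your argument that the lift has class $(0,\pm1)$ requires $\bar{c}'$ to avoid the arcs $\{\alpha=0\}$ and $\{\alpha=\pi\}$, and your proposed fix -- choosing the $C^0$-approximation of $\bar{c}$ suitably -- does not address it: those arcs lie in the \emph{interiors} of $D_{\mathrm{top}}$ and $D_{\mathrm{bot}}$ respectively, so containment in $D_{\mathrm{top}}$ gives nothing, and no small perturbation helps if $\bar{c}$ itself crossed them. (Note that $Y_2$ is not assumed $\SU(2)$-abelian in Setup~\ref{setup:toroidal}, so one cannot quote Lemma~\ref{lem:avoid-edges} to keep $\gamma_2$ away from these edges.) The claim is nonetheless true, for a reason specific to the construction of $\bar{c}$: the arc $L^m_\pi\cap D_{\mathrm{top}}$ is properly embedded in the disk $D_{\mathrm{top}}$ and separates $(\tfrac{\pi}{2},2\pi)$ from the arc $\{\alpha=0\}$, and $c_0$ is by definition truncated at its \emph{first} crossing $p_0$ of $L^m_\pi$, so $c_0$ never reaches the component containing $\{\alpha=0\}$; applying $\tau$ handles $\tau(c_0)$ and $\{\alpha=\pi\}$, and $\ell_0$ has $\alpha\in[\alpha_0,\pi-\alpha_0]\subset(\tfrac{\pi}{4},\tfrac{3\pi}{4})$. (Alternatively, one can avoid the edge-avoidance question entirely by computing $p$ as an algebraic intersection number with $\{\alpha=c\}$ for $c$ small: both endpoints of $c_0$ have $\alpha>c$, so its contribution vanishes.) With this verification supplied, and the Moser step replaced by the relative statement, your argument goes through.
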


\begin{proof}
By taking a smaller value of $\epsilon > 0$ as needed, we can assume that $\bar{c}'$ only intersects an $\epsilon$-neighborhood of $\partial D_{\mathrm{top}}$ near $(\frac{\pi}{2},0)$ and $(\frac{\pi}{2},\pi)$, where it coincides with $L_0$ and $L_\pi$ respectively.

We build a smooth curve $\bar{c}''$ isotopic to $\bar{c}'$ by taking a piecewise linear path in $D_{\mathrm{top}}$ from
\[ (\tfrac{\pi}{2}, 2\pi) \quad\text{to}\quad (\tfrac{\pi}{2}+\tfrac{\epsilon}{2}, 2\pi-2\epsilon) \quad\text{to}\quad (\tfrac{\pi}{2}-\tfrac{\epsilon}{2}, \tfrac{\pi}{2} + 2\epsilon) \quad\text{to}\quad (\tfrac{\pi}{2},\pi), \]
rounding corners in an $\frac{\epsilon}{2}$-neighborhood of the middle two vertices, and then using $\tau$ to extend this to $D_{\mathrm{bot}}$.  Then $\bar{c}''$ agrees with $\bar{c}'$ in an $\epsilon$-neighborhood of $\partial D_{\mathrm{top}}$, and we can arrange the corner-rounding process so that $\bar{c}''$ intersects every horizontal line of the form $\{\beta = \beta_0\}$, $\beta_0 \in \R/2\pi\Z$, in a single point.  See the middle of Figure~\ref{fig:KHI-curve-isotopy}.

We now choose a smooth isotopy of the disk $D_{\mathrm{top}}$ that fixes the above $\epsilon$-neighborhood of $\partial D_{\mathrm{top}}$ and takes the arc $\bar{c}' \cap D_{\mathrm{top}}$ to $\bar{c}'' \cap D_{\mathrm{top}}$.  We extend this $\tau$-equivariantly across $D_{\mathrm{bot}}$ to get an isotopy
\[ \phi_t: X(T^2) \to X(T^2), \qquad t\in [0,1] \]
that satisfies $\phi_0 = \mathrm{id}$ and $\phi_1(\bar{c}') = \bar{c}''$, and that fixes the corners $(0,0)$, $(0,\pi)$, $(\pi,0)$, and $(\pi,\pi)$.
\begin{figure}
\begin{tikzpicture}[style=thick,scale=0.9]
\begin{scope}
\fill[gray!15] (0,0) -- (0.75,0) -- (0,3) -- (3,3) -- (2.25,6) -- (0,6) -- cycle;
\begin{scope}[style=thin]
  \clip (0,0) rectangle (3,6);
  \foreach \i in {1,...,5} {
    \def\lcolor{\ifodd\i{red}\else{Green}\fi};
    \draw[\lcolor,densely dashed] (0,3*\i) -- ++(3,-12);
  }
\end{scope}
\begin{scope}
  \draw plot[mark=*,mark size = 0.5pt] coordinates {(0,0)(3,0)(3,6)(0,6)} -- cycle; 
  \begin{scope}[decoration={markings,mark=at position 0.6 with {\arrow[scale=1]{>}}}]
    \draw[postaction={decorate}] (3,6) -- (0,6);
    \draw[postaction={decorate}] (3,0) -- (0,0);
  \end{scope}
  \begin{scope}[decoration={markings,mark=at position 0.55 with {\arrow[scale=1]{>>}}}]
    \draw[postaction={decorate}] (0,0) -- (0,3);
    \draw[postaction={decorate}] (0,6) -- (0,3);
  \end{scope}
  \begin{scope}[decoration={markings,mark=at position 0.575 with {\arrow[scale=1]{>>>}}}]
    \draw[postaction={decorate}] (3,0) -- (3,3);
    \draw[postaction={decorate}] (3,6) -- (3,3);
  \end{scope}
  \draw[dotted] (0,3) -- (3,3);
  \draw[thin,|-|] (0,-0.4) node[below] {\small$0$} -- node[midway,inner sep=1pt,fill=white] {$\alpha$} ++(3,0) node[below] {\small$\vphantom{0}\pi$};
  \draw[thin,|-|] (0.75,-0.4) node[below] {\small$\frac{\vphantom{3}\pi}{4}$} -- node[midway,inner sep=1pt,fill=white] {$\alpha$} ++(1.5,0) node[below] {\small$\vphantom{0}\frac{3\pi}{4}$};
  \draw[thin,|-|] (-0.3,0) node[left] {\small$0$} -- node[midway,inner sep=1pt,fill=white] {$\beta$} ++(0,6) node[left] {\small$2\pi$};
\end{scope}
\begin{scope}[color=blue,style=ultra thick]
   \foreach \x in {0,180} {
     \draw[rotate around={\x:(1.5,3)}] plot [smooth] coordinates { (1.5,6) (1.575,5.7) (1.6,5.6) (2.4,4.8) (1.5,5.2) (1.125,4.5) (1.25,4) (1.5,3)};
   }
\end{scope}
\node[blue,right] at (1.375,3.5) {$\bar{c}'$};
\begin{scope}[color=red, style=thick]
\clip (0,-0.1) rectangle (3,6.1);
\draw (0,3) -- (0.75,0) (2.25,6) -- (3,3);
\end{scope}
\end{scope}
\draw[->] (3.25,3) -- node[above] {$\phi_t$} (4.25,3);
\begin{scope}[xshift=5cm]
\fill[gray!15] (0,0) -- (0.75,0) -- (0,3) -- (3,3) -- (2.25,6) -- (0,6) -- cycle;
\begin{scope}[style=thin]
  \clip (0,0) rectangle (3,6);
  \foreach \i in {1,...,5} {
    \def\lcolor{\ifodd\i{red}\else{Green}\fi};
    \draw[\lcolor,densely dashed] (0,3*\i) -- ++(3,-12);
  }
\end{scope}
\begin{scope}
  \draw plot[mark=*,mark size = 0.5pt] coordinates {(0,0)(3,0)(3,6)(0,6)} -- cycle; 
  \begin{scope}[decoration={markings,mark=at position 0.6 with {\arrow[scale=1]{>}}}]
    \draw[postaction={decorate}] (3,6) -- (0,6);
    \draw[postaction={decorate}] (3,0) -- (0,0);
  \end{scope}
  \begin{scope}[decoration={markings,mark=at position 0.55 with {\arrow[scale=1]{>>}}}]
    \draw[postaction={decorate}] (0,0) -- (0,3);
    \draw[postaction={decorate}] (0,6) -- (0,3);
  \end{scope}
  \begin{scope}[decoration={markings,mark=at position 0.575 with {\arrow[scale=1]{>>>}}}]
    \draw[postaction={decorate}] (3,0) -- (3,3);
    \draw[postaction={decorate}] (3,6) -- (3,3);
  \end{scope}
  \draw[dotted] (0,3) -- (3,3);
  \draw[thin,|-|] (0,-0.4) node[below] {\small$0$} -- node[midway,inner sep=1pt,fill=white] {$\alpha$} ++(3,0) node[below] {\small$\vphantom{0}\pi$};
  \draw[thin,|-|] (0.75,-0.4) node[below] {\small$\frac{\vphantom{3}\pi}{4}$} -- node[midway,inner sep=1pt,fill=white] {$\alpha$} ++(1.5,0) node[below] {\small$\vphantom{0}\frac{3\pi}{4}$};
  \draw[thin,|-|] (-0.3,0) node[left] {\small$0$} -- node[midway,inner sep=1pt,fill=white] {$\beta$} ++(0,6) node[left] {\small$2\pi$};
\end{scope}
\begin{scope}[color=blue,style=ultra thick]
   \foreach \x in {0,180} {
     \draw[rotate around={\x:(1.5,3)}] plot [smooth] coordinates { (1.5,6) (1.575,5.7) (1.575,5.5) (1.4575, 3.5) (1.4575,3.25) (1.5,3) };
   }
\end{scope}
\node[blue,right] at (1.45,3.5) {$\bar{c}''$};
\begin{scope}[color=red, style=thick]
\clip (0,-0.1) rectangle (3,6.1);
\draw (0,3) -- (0.75,0) (2.25,6) -- (3,3);
\end{scope}
\end{scope}
\draw[->] (8.25,3) -- node[above] {$\psi_t$} (9.25,3);
\begin{scope}[xshift=10cm]
\fill[gray!15] (0,0) -- (0.75,0) -- (0,3) -- (3,3) -- (2.25,6) -- (0,6) -- cycle;
\begin{scope}[style=thin]
  \clip (0,0) rectangle (3,6);
  \foreach \i in {1,...,5} {
    \def\lcolor{\ifodd\i{red}\else{Green}\fi};
    \draw[\lcolor,densely dashed] (0,3*\i) -- ++(3,-12);
  }
\end{scope}
\begin{scope}
  \draw plot[mark=*,mark size = 0.5pt] coordinates {(0,0)(3,0)(3,6)(0,6)} -- cycle; 
  \begin{scope}[decoration={markings,mark=at position 0.6 with {\arrow[scale=1]{>}}}]
    \draw[postaction={decorate}] (3,6) -- (0,6);
    \draw[postaction={decorate}] (3,0) -- (0,0);
  \end{scope}
  \begin{scope}[decoration={markings,mark=at position 0.55 with {\arrow[scale=1]{>>}}}]
    \draw[postaction={decorate}] (0,0) -- (0,3);
    \draw[postaction={decorate}] (0,6) -- (0,3);
  \end{scope}
  \begin{scope}[decoration={markings,mark=at position 0.575 with {\arrow[scale=1]{>>>}}}]
    \draw[postaction={decorate}] (3,0) -- (3,3);
    \draw[postaction={decorate}] (3,6) -- (3,3);
  \end{scope}
  \draw[dotted] (0,3) -- (3,3);
  \draw[thin,|-|] (0,-0.4) node[below] {\small$0$} -- node[midway,inner sep=1pt,fill=white] {$\alpha$} ++(3,0) node[below] {\small$\vphantom{0}\pi$};
  \draw[thin,|-|] (0.75,-0.4) node[below] {\small$\frac{\vphantom{3}\pi}{4}$} -- node[midway,inner sep=1pt,fill=white] {$\alpha$} ++(1.5,0) node[below] {\small$\vphantom{0}\frac{3\pi}{4}$};
  \draw[thin,|-|] (-0.3,0) node[left] {\small$0$} -- node[midway,inner sep=1pt,fill=white] {$\beta$} ++(0,6) node[left] {\small$2\pi$};
\end{scope}
\begin{scope}[color=blue,style=ultra thick]
   \draw (1.5,0) -- (1.5,6);
\end{scope}
\begin{scope}[color=red, style=thick]
\clip (0,-0.1) rectangle (3,6.1);
\draw (0,3) -- (0.75,0) (2.25,6) -- (3,3);
\end{scope}
\end{scope}
\end{tikzpicture}
\caption{A $\tau$-equivariant isotopy carrying $\bar{c}'$ to the straight line $\{\alpha=\frac{\pi}{2}\}$.  In each picture the disk $D_{\mathrm{top}}$ is shaded.}
\label{fig:KHI-curve-isotopy}
\end{figure}
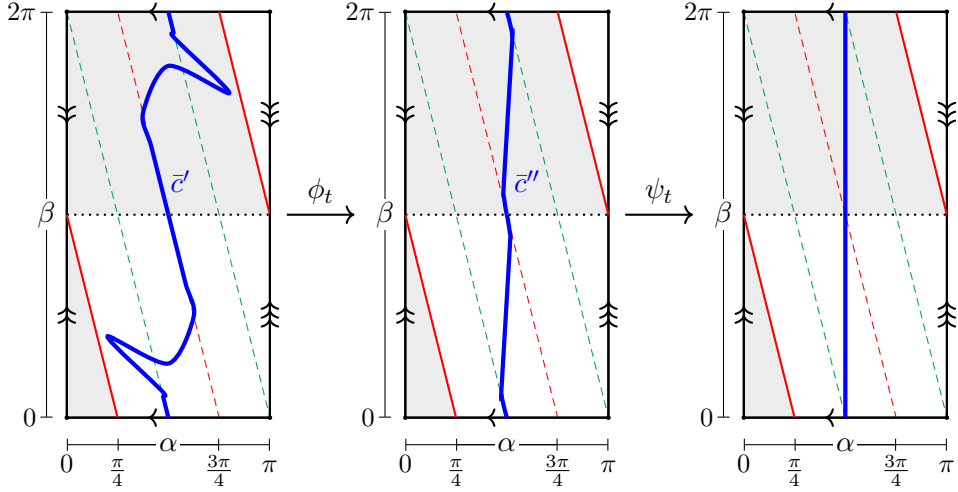
We then apply another $\tau$-invariant isotopy of the form
\[ \psi_t(\alpha,\beta) = (f_\beta(\alpha),\beta), \qquad t\in[0,1], \]
supported in the region
\[ \frac{\pi}{2}-\epsilon < \alpha < \frac{\pi}{2}+\epsilon \]
of the pillowcase, that takes $\phi_1(\bar{c}') = \bar{c}''$ to the line $\{\alpha = \frac{\pi}{2}\}$.  Each of these is illustrated in Figure~\ref{fig:KHI-curve-isotopy}.

The composition of $\phi$ and $\psi$ is an isotopy of the pillowcase that fixes each of the four corners, and that carries $\bar{c}'$ to $\{\alpha=\frac{\pi}{2}\}$ through $\tau$-invariant curves $\bar{c}'_t$.  These curves separate $X(T^2)$ into components that are exchanged by the isometry $\tau$ and must therefore have equal area.  We can therefore use \cite[Lemma~4.9]{sz-pillowcase} to promote $\phi \ast \psi$ to a smooth isotopy
\[ h_t: X(T^2) \to X(T^2), \qquad t\in[0,1], \]
fixing a neighborhood of the four corners for all $t$, such that each $h_t$ is a symplectomorphism satisfying $h_t(\bar{c}'_0) = \bar{c}'_t$.  Then $h_t$ is the desired isotopy from our original curve $\bar{c}'_0 = \bar{c}'$ to the line $\bar{c}'_1 = \{\alpha=\frac{\pi}{2}\}$.
\end{proof}

\begin{proof}[Proof of Proposition~\ref{prop:rho-from-KHI}]
As above, we assume without loss of generality that $(\frac{\pi}{2},0) \in \gamma_2$.  Assuming that the desired $\rho$ does not exist, Lemma~\ref{lem:gamma-hits-lpi-and-pi/2} says that the curve $\sigma(\gamma_2)$ is disjoint from the line segments $L^\ell_\pi$ and $L^r_\pi$ of \eqref{eq:L_pi-left-right}, so we have subsequently used $\sigma(\gamma_2)$ to construct
\begin{enumerate}
\item A smooth simple closed curve $\bar{c}'$ in the pillowcase (as in \eqref{eq:bar-c-prime}) such that
\begin{enumerate}
\item $\bar{c}'$ is disjoint from $i^*X^\irr(Y_1,K_1)$,
\item and the intersection 
\[ \bar{c}' \cap \{\beta\equiv 0 \!\!\! \pmod{2\pi} \} = \big\{ (\tfrac{\pi}{2},0) \big\} \]
is transverse.
\end{enumerate}
\item An area-preserving isotopy $h_t: X(T^2) \to X(T^2)$ that fixes the four corners of the pillowcase and sends $\bar{c}'$ to the line $\{\alpha = \frac{\pi}{2}\}$, by Lemma~\ref{lem:bar-c-isotopy}.
\end{enumerate}
Since $\Delta_{K_1}(e^{2i\cdot\pi/2}) = \Delta_{K_1}(-1)$ is nonzero, we can now apply Theorem~\ref{thm:khi-bound} to conclude that
\[ \dim \KHI(Y_1,K_1) \leq 1. \]
But this contradicts Lemma~\ref{lem:KHI-rank}, which asserts that
\[ \dim \KHI(Y_1,K_1) \geq 2. \]
The representation $\rho$ must therefore exist after all.
\end{proof}

We can finally prove the main theorem of this section.

\begin{proof}[Proof of Theorem~\ref{thm:nullhomologous-case}]
Proposition~\ref{prop:order-4-gluing} says that we can write $Y = M_1 \cup_{T^2} M_2$, where each $M_j$ is the exterior of a knot $K_j$ in a homology sphere $Y_j$ and the gluing maps satisfy
\begin{align*}
\mu_1 &\sim \mu_2, &
\lambda_1^{-1} &\sim \mu_2^4\lambda_2
\end{align*}
as in \eqref{eq:4-gluing}.  Then $(Y_1)_4(K_1) \cong M_1(\lambda_2)$ and $(Y_2)_4(K_2) \cong M_2(\lambda_1)$ are both lens spaces of order $4$, so Lemma~\ref{lem:4-gamma} provides us with simple closed curves $\gamma_j \subset i^*X(Y_j,K_j)$ for $j=1,2$, and all of the above is exactly as described in Setup~\ref{setup:toroidal}.

Now if neither $\gamma_1$ nor $\gamma_2$ contains the point $(\frac{\pi}{2},0)$, then $\rho$ is provided by Proposition~\ref{prop:both-avoid-pi/2}.  Otherwise at least one of the $\gamma_j$ contains $(\frac{\pi}{2},0)$, and then Proposition~\ref{prop:rho-from-KHI} tells us that $\rho$ must exist.
\end{proof}

Since Theorem~\ref{thm:nullhomologous-case} has now been proved, this completes the proof of Theorem~\ref{thm:main-Z/4}. \hfill\qed

\bibliographystyle{myalpha}
\bibliography{References}

\begin{thebibliography}{HPSP01}

\bibitem[BBL16]{baker-buck-lecuona}
K.~L. Baker, D.~Buck, and A.~G. Lecuona.
\newblock Some knots in {$S^1\times S^2$} with lens space surgeries.
\newblock {\em Comm. Anal. Geom.}, 24(3):431--470, 2016.

\bibitem[BZ98]{boyer-zhang-seminorms}
S.~Boyer and X.~Zhang.
\newblock On {C}uller-{S}halen seminorms and {D}ehn filling.
\newblock {\em Ann. of Math. (2)}, 148(3):737--801, 1998.

\bibitem[BZ03]{burde-zieschang}
G.~Burde and H.~Zieschang.
\newblock {\em Knots}, volume~5 of {\em De Gruyter Studies in Mathematics}.
\newblock Walter de Gruyter \& Co., Berlin, second edition, 2003.

\bibitem[CS83]{cs-splittings}
M.~Culler and P.~B. Shalen.
\newblock Varieties of group representations and splittings of {$3$}-manifolds.
\newblock {\em Ann. of Math. (2)}, 117(1):109--146, 1983.

\bibitem[GL23]{gl-decomposition}
S.~Ghosh and Z.~Li.
\newblock Decomposing sutured monopole and instanton {F}loer homologies.
\newblock {\em Selecta Math. (N.S.)}, 29(3):Paper No. 40, 60, 2023.

\bibitem[GSZ23]{gsz}
S.~Ghosh, S.~Sivek, and R.~Zentner.
\newblock Rational homology 3-spheres and {$\mathrm{SL}(2,\mathbb{C})$}
  representations.
\newblock arXiv:2310.17965, 2023.

\bibitem[Hei74]{heil}
W.~Heil.
\newblock Elementary surgery on {S}eifert fiber spaces.
\newblock {\em Yokohama Math. J.}, 22:135--139, 1974.

\bibitem[HPSP01]{heusener-porti-suarez}
M.~Heusener, J.~Porti, and E.~Su\'{a}rez~Peir\'{o}.
\newblock Deformations of reducible representations of 3-manifold groups into
  {${\rm SL}_2(\bold C)$}.
\newblock {\em J. Reine Angew. Math.}, 530:191--227, 2001.

\bibitem[Kla91]{klassen}
E.~P. Klassen.
\newblock Representations of knot groups in {${\rm SU}(2)$}.
\newblock {\em Trans. Amer. Math. Soc.}, 326(2):795--828, 1991.

\bibitem[KM10]{km-excision}
P.~Kronheimer and T.~Mrowka.
\newblock Knots, sutures, and excision.
\newblock {\em J. Differential Geom.}, 84(2):301--364, 2010.

\bibitem[LPCZ23]{lpcz}
T.~Lidman, J.~Pinz\'on-Caicedo, and R.~Zentner.
\newblock Toroidal integer homology three-spheres have irreducible
  {$SU(2)$}-representations.
\newblock {\em J. Topol.}, 16(1):344--367, 2023.

\bibitem[Ron92]{rong}
Y.~W. Rong.
\newblock Degree one maps between geometric {$3$}-manifolds.
\newblock {\em Trans. Amer. Math. Soc.}, 332(1):411--436, 1992.

\bibitem[Sco83]{scott}
P.~Scott.
\newblock The geometries of {$3$}-manifolds.
\newblock {\em Bull. London Math. Soc.}, 15(5):401--487, 1983.

\bibitem[SZ22a]{sz-menagerie}
S.~Sivek and R.~Zentner.
\newblock A menagerie of {$SU(2)$}-cyclic 3-manifolds.
\newblock {\em Int. Math. Res. Not. IMRN}, (11):8038--8085, 2022.

\bibitem[SZ22b]{sz-pillowcase}
S.~Sivek and R.~Zentner.
\newblock {$SU(2)$}-cyclic surgeries and the pillowcase.
\newblock {\em J. Differential Geom.}, 121(1):101--185, 2022.

\bibitem[Zen18]{zentner}
R.~Zentner.
\newblock Integer homology 3-spheres admit irreducible representations in
  {${\rm SL}(2,\Bbb{C})$}.
\newblock {\em Duke Math. J.}, 167(9):1643--1712, 2018.

\end{thebibliography}

\end{document}